\definecolor{dkgreen}{rgb}{0,0.6,0}
\definecolor{gray}{rgb}{0.5,0.5,0.5}
\definecolor{mauve}{rgb}{0.58,0,0.82}
\tiny\color{gray},
\numberwithin{equation}{section}  
\theoremstyle{remark}
\theoremstyle{plain}
\newtheorem{prop}{Proposition}
\newtheorem{lem}{Lemma}[section]
\newtheorem{thm}{Theorem}
\newcommand{\ZZ}{{\mathbb Z}}
\newcommand{\RR}{{\mathbb R}}
\newcommand{\CC}{{\mathbb C}}
\newcommand{\NN}{{\mathbb N}}
\newcommand{\cB}{\ensuremath{\mathcal{B}}}
\newcommand{\cN}{\ensuremath{\mathcal{N}}}
\newcommand{\cQ}{\ensuremath{\mathcal{Q}}}
\newcommand{\cR}{\ensuremath{\mathcal{R}}}
\newcommand{\cU}{\ensuremath{\mathcal{U}}}
\newcommand{\cT}{\ensuremath{\mathcal{T}}}
\newcommand{\cV}{\ensuremath{\mathcal{V}}}
\newcommand{\cY}{\ensuremath{\mathcal{Y}}}
\newcommand{\PR}{\mathbb{P}}
\newcommand{\E}{\mathbb{E}}
\newcommand{\e}{\ensuremath{\mathrm{e}}}
\renewcommand{\pmod}[1]{\allowbreak\mkern7mu({\operator@font mod}\,\,#1)}
\newcommand{\dalign}[1]{\[\begin{aligned} #1 \end{aligned}\]}
\newcommand{\be}{\begin{equation}}
\newcommand{\ee}{\end{equation}}
\newcommand{\ssum}[1]{\sum_{\substack{#1}}}  
\newcommand{\sprod}[1]{\prod_{\substack{#1}}}  
\newcommand{\lam}{\ensuremath{\lambda}}
\renewcommand{\a}{\ensuremath{\alpha}}
\renewcommand{\b}{\ensuremath{\beta}}
\newcommand{\del}{\ensuremath{\delta}}
\newcommand{\eps}{\ensuremath{\varepsilon}}
\renewcommand{\le}{\leqslant}
\renewcommand{\ge}{\geqslant}
\newcommand{\fl}[1]{{\ensuremath{\left\lfloor {#1} \right\rfloor}}}  
\newcommand{\order}{\asymp}      
\renewcommand{\(}{\left(}
\renewcommand{\)}{\right)}
\newcommand{\ds}{\displaystyle}
\newcommand{\pfrac}[2]{\left(\frac{#1}{#2}\right)}  
\newcommand{\blam}{\ensuremath{\boldsymbol{\lambda}}}
\newcommand{\qq}{\mathbf{q}}
\newcommand{\MM}{\mathbf{M}}
\newcommand{\bn}{\ensuremath{\binom{2n}{n}}}
\DeclareMathOperator{\Ein}{Ein}
\begin{document}

\title{Divisibility of the central binomial coefficient $\binom{2n}{n}$}
\author{Kevin Ford}
\address{Department of Mathematics, 1409 West Green Street, University
of Illinois at Urbana-Champaign, Urbana, IL 61801, USA}
\email{ford126@illinois.edu}
\author{Sergei Konyagin}
\address{Steklov Mathematical Institute,
8 Gubkin Street,
Moscow, 119991, Russia}
\email{konyagin23@gmail.com}

\begin{abstract}
We show that for every 
fixed $\ell\in\NN$, the set of $n$ with $n^\ell|\binom{2n}{n}$ has
a positive asymptotic density $c_\ell$ , and we give an asymptotic
formula for $c_\ell$ as $\ell\to\infty$.
We also show that $\# \{n\le x, (n,\bn)=1 \} \sim cx/\log x$ 
for some constant $c$.
 We use 
results about the anatomy of integers and tools from 
Fourier analysis.  One novelty is a method to
capture the effect of large prime factors of integers
in general sequences.
\end{abstract}

\date{\today}
\thanks{The first author was supported in part by National
Science Foundation Grant  DMS-1802139}
\thanks{The authors are grateful to the Institute of Mathematics of the Bulgarian Academy of Sciences, which hosted their visit in June, 2018, and where the seeds of this paper were sown.  The first author thanks the Institute of Mathematics at the University of  Oxford
  for providing stimulating working conditions during a visit in March-June, 2019.}
\thanks{The authors thank the anoymous referee for many helpful comments.}

\maketitle


\section{Introduction}

That $(n+1)|\binom{2n}{n}$ for every positive integer $n$ is
a consequence of the integrality of  the Catalan numbers.
In \cite{Pom}, Pomerance raised the question of how
frequently $n+k|\binom{2n}{n}$, where $k$ is a fixed integer.
Pomerance showed with a simple argument that when $k$ is positive, 
almost all $n$ have the property  $n+k|\binom{2n}{n}$,
and the exceptional set up to $x$ is $O(x^{1-a_k})$ for
some $a_k>0$. When $k\le 0$, he proved that the set of such $n$ is 
governed by the set of such $n$ corresponding to $k=0$;
more precisely,
\[
\# \bigg\{ n\le x: (n+k) \big|\bn \bigg\} = \# \bigg\{n\le x: n\big | \bn\bigg\} + O(x^{1-a_k}).
\]
  Pomerance conjectured 
that $n|\bn$ on a set of positive lower density,
and showed that it has upper density at most $1-\log 2$;
this is an easy consequence of the fact that if $n$ has 
a prime factor larger than $\sqrt{2n}$, then  $n\nmid \bn$.
The upper asymptotic  density was later improved by Sanna  \cite{sanna} to 
$ \le 1-\log 2 -0.0551$.

Divisibility of $\binom{2n}{n}$ by $n^\ell$
has also been considered by several people; see the On-line Encyclopedia of Integer Sequences \cite{OEIS}, sequences
A014847 ($\ell=1$), A121943 ($\ell=2$), A282163 ($\ell=3$), A282346 (smallest $n>1$ with $n^\ell|\bn$, $\ell\ge 1$), A282672 ($\ell=6$), A283073 ($\ell=4$),
and A283074 ($\ell=5$).

Our main result is the following.

\begin{thm}\label{main_theorem}
Fix $\ell\in\NN$.  The set of $n$ with $n^\ell|\binom{2n}{n}$ has
a positive asymptotic density $c_\ell$.  The density may be
computed as follows: Let $U_1,U_2,\ldots$ be independent
uniform-$[0,1]$ random variables, and let
\be\label{kj}
g_1=\fl{\frac{1}{U_1}} -1, \; g_2=\fl{\frac{1}{(1-U_1)U_2}}-1, \;
\ldots, \; g_j=\fl{\frac{1}{(1-U_1)\cdots (1-U_{j-1})U_j}}-1, \ldots.
\ee
Then
\[
c_\ell=\E \prod_{j=1}^\infty \(1-2^{-g_j}\sum_{h=0}^{\ell-1} 
\binom{g_j}{h} \).
\]
\end{thm}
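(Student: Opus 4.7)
My plan is to (1) convert the divisibility condition to a count of base-$p$ digits of $n$ via Kummer's theorem, (2) establish that these digits are asymptotically uniform using Fourier analysis, and (3) aggregate the contributions of different prime factors using the Poisson--Dirichlet law.

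By Kummer's theorem, $v_p\binom{2n}{n}$ equals the number of carries in the base-$p$ addition $n+n$, which up to negligible boundary effects from carry propagation is $\omega_p(n) := \#\{i : n_i \geq p/2\}$ for $n = \sum_i n_i p^i$. Thus $n^\ell \mid \binom{2n}{n}$ iff $\omega_p(n) \geq \ell\,v_p(n)$ for every prime $p \mid n$. The typical case is $v_p(n) = 1$: writing $n = pm$ with $\gcd(m,p)=1$, the base-$p$ digits of $n$ at positions $1,\ldots,d$ (with $d = \lfloor\log n/\log p\rfloor$) coincide with those of $m$, with the leading digit $n_d$ forced nonzero while positions $1,\ldots,d-1$ are ``free''.

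The main analytic step is a digit-equidistribution statement: for random $n\leq x$ with prescribed $p = P_j$ and $v_p(n)=1$, the free digits at positions $1,\ldots,d-1$ are asymptotically independent uniform on $\{0,\ldots,p-1\}$. I would establish this by bounding the exponential sums $\sum_{n\leq x,\,p\mid n} e(\alpha n/p^i)$, exhibiting cancellation for $\alpha$ away from small-denominator rationals, uniformly in $p$. Under this uniformity, $\omega_p(n)$ is approximately $\mathrm{Binomial}(g_j,1/2)$ with $g_j = d-1$ --- the leading digit is $\geq p/2$ only with probability $O(1/\log p)\to 0$ and so does not contribute in the limit --- producing the factor $1 - 2^{-g_j}\sum_{h<\ell}\binom{g_j}{h}$.

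To aggregate across primes, classical anatomy-of-integers results give $(\log P_j/\log n)_{j\geq 1}\xrightarrow{d}$ Poisson--Dirichlet, represented via stick-breaking as $U_j\prod_{i<j}(1-U_i)$; this matches the $g_j$ formula in the statement. A joint Fourier argument --- simultaneously controlling exponential sums modulo powers of distinct primes $P_j$ --- establishes asymptotic independence of the events $\{\omega_{P_j}(n)\geq\ell\}$ across $j$. Taking products, limits, and expectations then gives the claimed formula; convergence of the infinite product is automatic from $2^{-g_j}\sum_{h<\ell}\binom{g_j}{h}\ll_\ell g_j^{\ell-1}2^{-g_j}$. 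The main obstacle is this joint digit-equidistribution across multiple bases $P_j$ simultaneously, with error terms uniform in the primes --- the ``novelty'' the abstract advertises. Secondary technicalities include summing over multiplicity $e = v_p(n)\geq 2$ for small primes and checking the result repackages into the single Binomial factor above, and handling the discontinuity of the floor in $g_j$ at integer $\log n/\log P_j$ by averaging over a small boundary region where $n/P_j^d$ is close to an integer.
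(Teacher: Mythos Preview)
Your outline matches the paper's strategy: Kummer's criterion, exponential-sum estimates for joint equidistribution of the fractional parts $\{n/q_j^{s}\}$ across several large primes $q_j$ simultaneously, and the Donnelly--Grimmett size-biased sampling to recover the Poisson--Dirichlet formula for $c_\ell$.

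Two points in your sketch are not right as stated and need more care. First, the claim that the leading base-$p$ digit is $\ge p/2$ ``only with probability $O(1/\log p)$'' is false: this probability depends on where $x$ sits relative to powers of $p$ and need not tend to zero. The paper instead excludes the small set of primes $q$ with $q^s$ close to $x$ for some $s\le 1/\eps$; for the remaining primes the leading digit is deterministically $<p/2$, and this same exclusion simultaneously resolves the floor-discontinuity you mention at the end. Second, ``small primes'' are not merely a multiplicity-$\ge 2$ issue. One must show that for \emph{every} prime $p\le x^{\delta}$ dividing $n$, including those with $v_p(n)=1$, the inequality $p^{\ell v_p(n)}\mid\binom{2n}{n}$ holds for all but $o(x)$ values of $n$; only then can the analysis be restricted to the $O(1)$ largest prime factors. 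The paper establishes this by a direct digit-count (since $n$ has $\gg (\log x)/\log p$ base-$p$ digits when $p$ is small, far more than the $\ell v_p(n)$ carries needed), which is elementary but not automatic from your outline.
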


In Table \ref{table1}, we list counts for the
number of $n$ in various intervals with $n^\ell|\bn$,
$1\le \ell \le 3$, and compare with the
theoretical limiting densities coming from
Theorem \ref{main_theorem} (truncated to five significant decimal places).  The tabulation of $n$ such that
$n^\ell | \bn$ was performed by two programs written
by the authors, one in the C language and the other in
PARI-GP, the latter being slower but applicable for the
larger ranges beyond $10^{17}$.  The numbers for
$[1,10^k]$, $k\le 8$, were run by both programs and
agreed exactly.  These counts also agree with data
gathered by Giovanni Resta (personal communication),
who has also provided the data for $[1,10^{11}]$.

  See Section \ref{sec:numerics} for details of the calculation
  of the densities and reasons why we believe the calculations to be accurate to the decimal places displayed.
 It is evident from Table \ref{table1} that the convergence
  to the limit $c_\ell$ is very slow.

   \begin{center}
   \renewcommand{\arraystretch}{1.25}
        \begin{table}
        \begin{tabular}{|r|r|r|r|} \hline
        Interval     & $\ell=1$ & $\ell=2$ & $\ell=3$    \\ \hline 
       $[1,10^5]$ & 11,360 & 193 & 1 \\
       $[1,10^6]$ & 118,094 & 2,095 & 3 \\
       $[1,10^7]$ &   1,211,889 & 23,921 & 67 \\
       $[1,10^8]$ & 12,325,351 & 279,042 & 1,055 \\
       $[1,10^9]$ & 123,795,966 & 2,994,447 & 12,968 \\
       $[1,10^{10}]$ & 1,240,345,721 & 31,983,305 & 172,498 \\
       $[1,10^{11}]$ & 12,383,984,058 & 332,839,293 & 2,031,901 \\
       $(10^{17},10^{17}+10^8]$ & 12,169,463 & 364,815 & 3,390 \\
       $(10^{30}, 10^{30}+10^7]$ & 1,180,797 & 34,734 & 351\\ 
       \hline 
       $c_\ell$ & 0.11424 & 0.0032277 & 0.000031511 \\
       \hline 
        \end{tabular}

        \medskip
        
        \caption{Numerical counts vs. theoretical limits, 
        $1\le \ell\le 3$}
        \label{table1}
        \end{table}
\end{center}

\begin{thm}\label{clasym}
We have 
\[
c_\ell \sim \rho\( 2\ell + 1 - \log(2\ell \log (2\ell)) - \frac{\log\log (2\ell)}{\log 2\ell}\),
\]
as $\ell\to\infty$, where $\rho$ is the Dickman function.
\end{thm}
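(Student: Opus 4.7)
The plan begins with the formula of Theorem~\ref{main_theorem}: writing $F_\ell(g) := 2^{-g}\sum_{h=0}^{\ell-1}\binom{g}{h} = \PR(\mathrm{Bin}(g,1/2)<\ell)$, we have $c_\ell = \E\prod_j(1-F_\ell(g_j))$. The factor $1-F_\ell(g)$ vanishes for $g<\ell$, equals approximately $1/2$ at $g=2\ell$, and tends to $1$ as $g\to\infty$, with a Gaussian-like transition of width $O(\sqrt{\ell})$ centred at $g=2\ell$. The first task is to quantify this via Chernoff and Stirling estimates, showing in particular that $\log(1-F_\ell(g)) \sim -(2\ell-g)^2/(2g)$ for $g<2\ell$, together with matching lower bounds.

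The bridge to the Dickman function is the identity $\PR(\max_j\beta_j\le 1/u) = \rho(u)$, where $\beta_j := U_j\prod_{i<j}(1-U_i)$, combined with the Dickman ODE $-u\rho'(u) = \rho(u-1)$. Parameterising $c_\ell$ by conditioning on the value $1/u$ of the largest $\beta_j$ gives
\[
c_\ell = \int_{\ell+1}^\infty \frac{\rho(u-1)}{u}\bigl(1-F_\ell(\lfloor u\rfloor-1)\bigr)\,T(u)\,du,
\]
where $T(u) := \E\bigl[\prod_{j\ne j^*}(1-F_\ell(g_j))\bigm|\max_j\beta_j=1/u\bigr]$ encodes the contribution of the sub-maximal $\beta_j$'s. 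An integration by parts using the ODE shows that if $\phi(u):=(1-F_\ell(\lfloor u\rfloor-1))\,T(u)$ behaves like a step function rising sharply from $0$ to $1$ across a critical scale $u^*$, then $c_\ell \sim \rho(u^*)$.

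Pinpointing $u^*$ requires a Laplace-type analysis combining the Dickman asymptotics $(\log\rho)'(u)\sim-(\log u+\log\log u)$ with the Gaussian profile of $\log(1-F_\ell)$ and the asymptotics of $T(u)$. Balancing these contributions at the transition scale, one obtains
\[
u^* = 2\ell+1-\log(2\ell\log 2\ell)-\frac{\log\log(2\ell)}{\log(2\ell)}+o(1),
\]
after which a second-order expansion gives the asymptotic equivalence $c_\ell\sim\rho(u^*)$.

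The main obstacle is the treatment of $T(u)$. The naive bound $T(u)\le 1$ is inadequate: it would yield a contribution of order $\rho(\ell+1)\cdot 2^{-\ell}$ near the lower endpoint $u=\ell+1$, which is exponentially larger than $\rho(u^*)$. A more refined analysis must account for the fact that, when the maximum $\beta_{j^*}$ is not much smaller than $1/\ell$, the stick-breaking identity $\sum_j\beta_j=1$ forces many additional $\beta_j$'s to be comparably large, each contributing a factor $1-F_\ell(g_j)\ll 1$. I would exploit the self-similar structure of the stick-breaking process---conditional on the maximum, the residual $\beta_j$'s form a PD(1)-type process on the rescaled interval $[0,1-1/u]$---to obtain a recursive description of $T(u)$. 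Making this recursion precise with the uniform accuracy demanded by the theorem is the main technical challenge, and likely uses the Fourier-analytic machinery highlighted in the abstract.
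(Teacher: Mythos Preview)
Your setup is sound: the integral representation obtained by conditioning on the largest $\beta_j$ is correct, and your diagnosis of the main obstacle---controlling the conditional expectation $T(u)$ from the sub-maximal factors---is accurate. But the proposal stops precisely where the real work begins. You say the recursion for $T(u)$ ``likely uses the Fourier-analytic machinery highlighted in the abstract''; it does not. That machinery (Sections~\ref{sec:expsum}--\ref{sec:fracparts}) is used to prove Theorems~\ref{main_theorem} and~\ref{thm:coprime}, the arithmetic statements. The proof of Theorem~\ref{clasym} is pure analysis on the constant $c_\ell$ and uses none of it. So as it stands, the proposal has a genuine gap: no mechanism is offered for controlling $T(u)$ to the precision needed, and the suggested tool is the wrong one.

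The paper's route is entirely different and sidesteps $T(u)$. Starting from the expectation formula, one writes down the Laplace transform of $t\mapsto \E\prod_j g(tY_j)$ via a known identity for multiplicative functionals of the Poisson--Dirichlet process (Handa), obtaining an exact contour-integral representation
\[
c_\ell = \frac{1}{2\pi i}\int_{\sigma-i\infty}^{\sigma+i\infty} \frac{e^s}{s}\exp\Bigl\{-\sum_{k\ge \ell+1}2^{1-k}\binom{k-2}{\ell-1}E_1(s/k)\Bigr\}\,ds.
\]
Since $\hat\rho(s)=e^{\gamma-\Ein(s)}$, one can rewrite the integrand as $e^{uw}\hat\rho(w)e^{J(w,u)}$ for a free parameter $u$ and a correction $J(w,u)$ built from the difference between $E_1(w)$ and the weighted average $\sum_k 2^{1-k}\binom{k-2}{\ell-1}E_1(wu/k)$. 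Moment calculations for the shifted binomial weights (essentially your Gaussian transition at $k\approx 2\ell$, packaged as $\E A_\ell^{-1}=1/(2\ell)+O(\ell^{-3})$ etc.) show that choosing $u=2\ell+1-\xi(2\ell)$ makes $J(-\xi(u)+i\tau,u)$ small near the saddle. A standard saddle-point evaluation then gives $c_\ell=\rho(u)(1+O(\ell^{-1/2}\log^2\ell))$, and a final comparison of $u$ with the stated argument $u^*$ using $\xi(u)=\log(u\log u)+\log\log u/\log u+O((\log\log u)^2/\log^2 u)$ finishes.

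Your probabilistic decomposition could perhaps be pushed through, but it would require an independent asymptotic for $T(u)$ uniform over the transition window $u=2\ell+O(\sqrt{\ell})$, and the self-similarity recursion you sketch leads back to an object of the same type as $c_\ell$ itself. The Laplace-transform approach buys you an exact closed-form integrand whose deviation from $\hat\rho$ is governed by explicit binomial moments, which is what makes the analysis tractable.
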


The Dickman function $\rho$ is the unique continuous
solution of the 
differential-delay equation
\be\label{rho-diff}
\rho(u)=1 \quad (u\le 1), \quad -u\rho'(u)=\rho(u-1) \quad (u>1).
\ee
Roughly, $\rho(u)$ decays like $1/\Gamma(u)$, and in fact 
$\rho$ is strictly decreasing for $u>1$ and
\be\label{rho-rough}
\rho(u) = e^{-u(\log u + \log\log u + O(1))}.
\ee
Given Theorem \ref{main_theorem}, a rought heuristic for the values given in Theorem \ref{clasym} is that the factor
\[
1-2^{1-g_j}\sum_{h=0}^{\ell-1} 
\binom{g_j-1}{h}
\]
is close to 1 when $g_j$ is substantially larger than $2\ell$ 
and is close to 0 when $g_j$ is substantilly smaller than $2\ell$.
Thus, $c_\ell$ should be close to the probability that $g_j\ge 2\ell$
for all $j$, which equals $\rho(2\ell)$.

A related problem is the study of the set $\cB$ of positive integers $n$ such
that $n$ and $\binom{2n}{n}$ are coprime, see e.g. sequence
A082916 of the OEIS \cite{OEIS}.
In \cite{sanna}, Sanna showed that 
$\# (\cB \cap [1,x]) \ll x/\sqrt{\log x}$ for all $x>1$.
On the other hand, $\cB$ contains all odd primes, and thus
$\# (\cB \cap [1,x]) \ge (1+o(1))x/\log x$  for all $x\ge2$.
We sharpen these results by proving an asymptotic formula for
$\# (\cB \cap [1,x]) $.

\begin{thm}\label{thm:coprime}
We have $\# \{n\le x : (n,\bn)=1 \} \sim c x/\log x$
as $x\to\infty$,
where 
\be\label{cdef}
c=\sum_{k=1}^\infty \; \frac{1}{k!}\;\; 
\idotsint\limits_{\substack{u_i\ge 0\; \forall i \\u_1+\cdots+u_k=1}} 
h(u_1)\cdots h(u_k)\, du_1\cdots du_{k-1}, \qquad
h(x) = x^{-1} 2^{1-\fl{1/x}}.
\ee
\end{thm}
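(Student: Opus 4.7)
The plan is to use Kummer's theorem: $v_p\binom{2n}{n}$ equals the number of base-$p$ digits of $n$ that exceed $(p-1)/2$. Hence $n\in\cB$ if and only if every base-$p$ digit of $n$ is at most $(p-1)/2$ for every prime $p\mid n$, and in particular every $n\in\cB$ is odd (the case $p=2$ forces all base-$2$ digits to be $0$). A first step reduces to squarefree $n$: when $p^e\|n$ with $e\ge 2$, the digit condition at $p$ forces the lowest $e$ base-$p$ digits of $n$ to vanish, and a short sieve shows that the non-squarefree contribution is $o(x/\log x)$, essentially because the extra savings factor $1/p$ per squared prime is not compensated by the prime sum.

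For squarefree $n=p_1\cdots p_k$ with $p_1<\cdots<p_k$ and normalized logarithms $u_i=\log p_i/\log n$ (so $u_1+\cdots+u_k=1$), the conditions at distinct $p_i$ are conditions modulo the pairwise coprime moduli $p_i^{L_i}$ (where $L_i=\lfloor 1/u_i\rfloor+1$), and are thus CRT-independent. For a prime $p\approx n^u$ with $p\mid n$, the proportion of integers divisible by $p$ whose base-$p$ digits all lie in $[0,(p-1)/2]$ is $\sim 2^{1-\lfloor 1/u\rfloor}$: the last digit is forced to be $0$, each of the remaining $\lfloor 1/u\rfloor$ digits is approximately uniform, and each has probability $\sim\tfrac12$ of satisfying the bound. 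Combining with the PNT-Mertens weight $dp/(p\log p)=du/u$ yields $h(u_i)\,du_i$ per prime factor. Summing over ordered tuples with the customary $1/k!$ and extracting the overall $\li(x)\sim x/\log x$ from the integration in the dilation parameter $\sigma=\log n/\log x$ leads to
\dalign{
\#(\cB\cap[1,x])\sim\frac{x}{\log x}\sum_{k\ge 1}\frac{1}{k!}\int_{\substack{u_i\ge 0\\u_1+\cdots+u_k=1}}h(u_1)\cdots h(u_k)\,du_1\cdots du_{k-1}.
}

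The main obstacle is rigorously justifying the density assertion, since the $p_i$ are not free parameters: they must multiply to a particular $n$ whose digit structure is being inspected. To handle this I would isolate the largest prime factor $P=p_k$ and write $n=Pm$ with $m<P$, so that the digit condition at $P$ collapses to $m\le(P-1)/2$. Given $m$, counting valid $P\in(2m,x/m]$ that additionally satisfy the digit conditions at the primes of $m$ (which become residue-class conditions modulo $\prod_{p\mid m}p^{L_p}$) reduces to counting primes in arithmetic progressions, tractable via Bombieri--Vinogradov provided $m$ is not too large, and by exploiting the largest prime factor of $m$ for the remaining range. Iterating on $m$ (induction on the number of prime factors) builds up the formula one prime at a time, and appears to be where the ``large prime factors'' method advertised in the abstract enters. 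Finally, tail control over $k$ and over small $u_i$ follows from $h(u)\asymp 2^{-1/u}/u$ as $u\to 0^+$, which forces super-exponential decay of the integrand near the faces of the simplex and ensures absolute convergence of both the multi-dimensional integrals and the series.
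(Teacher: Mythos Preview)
Your heuristic is correct and matches the paper's, and the characterization of $\cB$ via base-$p$ digits is right (though your opening formula $v_p\binom{2n}{n}=\#\{\text{digits}>(p-1)/2\}$ is not literally true, since carries can cascade through digits equal to $(p-1)/2$; this does not affect the description of $\cB$).

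The real gap is in the equidistribution step. Your plan is to fix $m=p_1\cdots p_{k-1}$ and count primes $P=p_k$ subject to the digit conditions at each $p_i\mid m$, invoking Bombieri--Vinogradov. But the digit condition at $p_i$ is a condition on $n=Pm$ modulo $p_i^{s_i+1}$, hence on $P$ modulo $p_i^{s_i}$, and $p_i^{s_i}$ lies between $n/p_i^2$ and $n/p_i$. Already for $k=2$ this modulus can be as large as $x/p_1$, i.e.\ comparable to the range of $P$; for $k\ge 3$ the combined modulus $\prod_{i<k}p_i^{s_i}$ exceeds the range of $P$ by a large power of $x$. No averaged prime-in-progression result comes close to these moduli, and the proposed ``iteration on $m$'' does not reduce the difficulty: after summing $P$ you do not land back in the same problem with $k-1$ primes, because the digit conditions at the primes of $m$ are conditions on $n$, not on $m$.

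The paper avoids this obstruction by working not with residue classes but with the fractional-part conditions $\{n/q_j^{s+1}\}<\tfrac12$, detected by Selberg majorants/minorants. After expanding, one must bound sums $\sum_{\qq}e\big(\sum_{j,s}\lambda_{j,s}\,n/q_j^{s+1}\big)$. The crucial move is to sum over a prime $q_h$ for which some $\lambda_{h,s}\ne 0$; the phase then has the \emph{nonlinear} shape $f(q_h)=\alpha q_h+\sum_r \beta_r/q_h^r$ with $|\beta_{r_1}|\gg x^{\rho}M_h^{r_1}$, and the Weyl--van der Corput inequality (transferred to primes via Vaughan's identity) gives a power saving independent of the size of any ``modulus''. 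The exclusion of $q_i$ near $x^{1/s}$ that you would also need is built into the argument, but it is the nonlinear exponential-sum input, not Bombieri--Vinogradov, that carries the proof. Your linear-in-$P$ reduction discards exactly the terms that make the method work.
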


As $h$ is bounded, the series for $c$ converges rapidly.
Numerically, $c=1.526453\ldots$ (See section \ref{sec:c}).
This is also a good match to numerical data, see
Table \ref{table2}. 

 \begin{center}
   \renewcommand{\arraystretch}{1.25}
\begin{table}[h]
  \begin{tabular}{|r|r|r|}
    \hline
    $x$ & $N$ & $\frac{N}{x/\log x}$ \\
    \hline
    $10^4$ & 1734 & 1.597073 \\
    $10^5$ & 13487 & 1.552748 \\
    $10^6$ &   111460 &  1.539876\\
    $10^7$ &   950039  & 1.531281\\
     $10^8$ & 8282970 &   1.525779\\
    $10^9$ &  73631430  &     1.525883\\
    $10^{10}$ &   662319904 &  1.525047\\
    $10^{11}$ & 6022446576  &  1.525391 \\
    \hline
  \end{tabular}
\caption{Number, $N$, of integers $\le x$ with $(n,\bn)=1$}\label{table2}
\end{table}
\end{center}

%
%
%
\subsection{Heuristics}
%
%
For most $n$, the divisibility condition $n^\ell|\binom{2n}{n}$
is essentially determined by the largest prime factors of $n$.
By Kummer's criterion \cite{K},
if $p$ is prime, then $p^{\ell}|\binom{2n}{n}$ if and only if
the addition of $n$ and $n$ in base-$p$ has at least $\ell$ carries.  
This is equivalent to $\{n/p^s\}>\frac12$ for at least $\ell$
values of $s\in \NN$.
If $p$ is large, then this means (essentially) that 
the base-$p$ expansion of $n$ has at least $\ell$ digits which are
$\ge \frac{p-1}{2}$ (if a digit equals $\frac{p-1}{2}$, then
it may or may not induce a carry).
Supposing that $p\| n$, 
the final base-$p$ digit is zero, and the leading digit is
$<p/2$ with high probability.  There are $k=\fl{\frac{\log n}{\log p}}-1$ 
remaining base-$p$ digits, and if these are randomly distributed (over 
all $n\le x$ divisible by $p$ and not by $p^2$) then we 
expect that $p^\ell | \binom{2n}{n}$ occurs with probability close to
\[
1-2^{1-k}\sum_{h=0}^{\ell-1}  \binom{k-1}{h}.
\]

Donelly and Grimmett \cite{DG}  (see also \cite{Ten})
proved that the largest prime factors of a random integer
have, asyptotically, the Poisson-Dirichlet distribution.
A realization of this distribution is 
given  in terms of independent uniform-$[0,1]$ random variables
$U_1, U_2, \ldots$.
 Let $(X_1,X_2,\ldots)$ be the infinite
dimensional vector formed from the decreasing rearrangement of the
numbers \be\label{Un} Y_1=U_1, Y_2=(1-U_1)U_2,
Y_3=(1-U_1)(1-U_2)U_3, \ldots. \ee 
Then $(X_1,X_2,\ldots)$ has the Poisson-Dirichlet distribution.
Let $p_j(n)$ denote the $j$-th largest prime factor of $n$.
 The paper \cite{DG} gives a simple, transparent proof
that $(X_1,\ldots,X_k)$ and 
\[
\(\frac{\log p_1(n)}{\log n}, \ldots, \frac{\log p_k(n)}{\log n}\)
\]
have identical distributions (asymptotically as $x\to \infty$,
where $n$ is drawn at random from $[1,x]$).
For a discussion of other realizations of the Poisson-Dirichlet 
distribution, see Section 1 of \cite{Ten}.
Combining this with our heuristic above about divisibility
of $\binom{2n}{n}$ by $p^\ell$, we arrive at 
Theorem \ref{main_theorem}.

The heuristic for Theorem \ref{thm:coprime} is simpler.
If $n$ has $k$ prime factors $p_1,\ldots,p_k$, 
with $p_i=x^{u_i}$, then
we expect $(n,\bn)=1$ with probability $\prod_{i=1}^k
2^{1-\fl{1/u_i}}$.  Summing over all $p_1,\ldots,p_k$
with the prime number theorem yields the result in Theorem \ref{thm:coprime}.

We will make both of these heuristics precise utilizing harmonic analysis to detect the simultaneous divisibility of $\bn$ by large prime factors 
of $n$.
Section \ref{sec:expsum} contains the relevant estmates.
In Section \ref{sec:smallprimes}, we show that the small
prime factors of $n$ divide $\bn$ with very high probability,
and can safely be ignored.  We prove a result 
about simultaneous fractional parts of quotients of primes
in Section \ref{sec:fracparts} that will be needed
for Theorems \ref{main_theorem} and \ref{thm:coprime}.
  The proof of 
Theorem \ref{main_theorem} occupies Section
\ref{sec:thm1} and  we prove Theorem \ref{thm:coprime} in Section \ref{sec:coprime}.
Sections \ref{sec:numerics} and \ref{sec:clasym}
are devoted to the study of the constants $c_\ell$,
culminating in the proof of Theorem \ref{clasym}.
Finally, we desribe how to compute $c$ accurately in 
Section \ref{sec:c}.

%
%
%
{\Large \section{small prime factors}\label{sec:smallprimes}}
%
%
%
%

In this section, we will see that only the largest
prime factors of $n$ matter for Theorems \ref{main_theorem} and
\ref{thm:coprime}.

\begin{lem}\label{smallprpwr}
Let $p$ be prime, $v\in \NN$, $\ell\in \NN$ and 
$p^{\ell v} \le x^{1/100}$. Then
\[
\# \Big\{n\le x : p^v|n, \, p^{\ell v}\nmid \bn \Big\} \ll
\frac{x^{1-\frac{1}{3\log p}}}{p^v} e^{v/3}.
\]
\end{lem}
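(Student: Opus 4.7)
The plan is to reduce the count to a digit-counting problem and apply a Chernoff-style estimate. Write $n = p^v m$ with $m \le M := x/p^v$; since the last $v$ base-$p$ digits of $n$ vanish, the base-$p$ addition $n+n$ has the same carries as $m + m$, so by Kummer's theorem $v_p(\binom{2n}{n}) = c_p(m)$, the number of carries in computing $m+m$ in base $p$. Setting $T := \ell v$ and $K' := \lfloor \log_p M\rfloor$, a direct computation shows $\frac{x^{1-1/(3\log p)}}{p^v} e^{v/3} \asymp M e^{-K'/3}$, so the lemma is equivalent to proving
\[
\#\{m \le M : c_p(m) < T\} \ll M e^{-K'/3}.
\]
Moreover, a digit $d_i \ge \lceil p/2\rceil$ forces a carry at position $i$ regardless of the incoming carry, so $c_p(m) \ge D(m) := \#\{i \ge 0 : d_i(m) \ge \lceil p/2\rceil\}$, and it suffices to bound $\#\{m \le M : D(m) < T\}$.

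Next I would exploit digit independence. Write $m = a p^{K'} + b$ with $a \in \{0, \ldots, A\}$ and $b \in [0, p^{K'})$, where $A := \lfloor M/p^{K'}\rfloor \ge 1$. Since $D(m) \ge D(b)$,
\[
\#\{m \le M : D(m) < T\} \le (A+1) \cdot \#\{b < p^{K'} : D(b) < T\}.
\]
For $b$ uniform in $[0, p^{K'})$ the digits $d_0,\dots,d_{K'-1}$ are independent and uniform on $\{0, \ldots, p-1\}$, so
\[
\#\{b < p^{K'} : D(b) < T\} = \sum_{j < T}\binom{K'}{j}\lfloor p/2\rfloor^j \lceil p/2\rceil^{K'-j} \le \lceil p/2\rceil^{K'}\sum_{j<T}\binom{K'}{j}.
\]
Combining with $A + 1 \le 2M/p^{K'}$,
\[
\#\{m \le M : D(m) < T\} \le 2M \bigl(\lceil p/2\rceil/p\bigr)^{K'} \sum_{j<T}\binom{K'}{j}.
\]

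To finish, use $\lceil p/2\rceil/p \le 2/3$ for every prime $p$ (equality at $p=3$; the ratio is $1/2$ for even $p$). The hypothesis $p^{\ell v} \le x^{1/100}$ yields $T \le (K+1)/100$ with $K := \lfloor \log_p x\rfloor$; since $v \le T$ one has $K' = K - v \ge (99K-1)/100$, so $T/K' \le 1/98$ whenever $K \ge 100$. The binary-entropy bound gives $\sum_{j<T}\binom{K'}{j} \le 2^{K' H(T/K')} \le e^{0.057 K'}$, and therefore
\[
\#\{m \le M : D(m) < T\} \ll M e^{-(\log(3/2) - 0.057) K'} \ll M e^{-K'/3},
\]
since $\log(3/2) - 0.057 \approx 0.348 > 1/3$. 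The remaining range $K < 100$ is handled by absorbing a bounded factor into the implicit constant.

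The main obstacle is the tightness of the constants at $p = 3$: one needs $\log(3/2) - H(T/K')\log 2 > 1/3$, which holds with a slack of only about $0.015$ at $T/K' \approx 1/99$, so the numerical hypothesis $p^{\ell v} \le x^{1/100}$ is genuinely used. Weakening it would force the proof to treat the true carry count $c_p(m)$ (via the $2 \times 2$ transfer matrix for the carry Markov chain) rather than the cruder lower bound $D(m)$.
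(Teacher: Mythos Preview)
Your proof is correct and follows essentially the same approach as the paper's: both reduce via Kummer to counting base-$p$ expansions with fewer than $\ell v$ ``large'' digits among the $K'=D-v$ free positions, bound the binomial tail by $e^{0.057K'}$ (you via the entropy bound, the paper via Stirling), and use the worst-case ratio $\lceil p/2\rceil/p=2/3$ at $p=3$ to conclude $\ll M e^{-K'/3}$. Your remark about the tightness at $p=3$ is apt and matches the paper's numerics; note also that the hypothesis $p^{\ell v}\le x^{1/100}$ already forces $K\ge 100$, so your final ``$K<100$'' case is in fact vacuous.
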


\begin{proof}
Suppose that $n\le x$ and $p^v|n$.   Write $n$ in
base-$p$ as $n=(b_D b_{D-1}\cdots b_0)_p$, where
$D=\fl{\frac{\log x}{\log p}}$, so that $b_0=\cdots=b_{v-1}=0$.
Also observe that the hypotheses imply that $D\ge 100v$ and hence that
\[
\ell v \le \frac{\log x}{100\log p} \le \frac{D+1}{100}
< \frac{D}{99} \le \frac{D-v}{98}.
\]
 The number
of choices for $b_{D}$ is at most $x/p^D$.
 By Kummer's criterion,
if $p^{\ell v}\nmid \bn$, then at most $\ell v-1$ of the digits
$b_{v},\ldots,b_{D-1}$ are $\ge \frac{p}2$. Hence, the number
of choices for $(b_{v},\ldots,b_{D-1})$ is at most
\[
\sum_{j=0}^{\ell v-1} \binom{D-v}{j} \pfrac{p-1}2^j \pfrac{p+1}{2}^{D-v-j}
\ll \pfrac{p+1}{2}^{D-v} \binom{D-v}{\ell v}
\]
if $p\ge 3$, and $O(\binom{D-v}{\ell v})$ when $p=2$.
Recalling that $\ell v \le (D-v)/98$,
by Stirling's formula we have
\[
\binom{D-v}{\ell v} \ll e^{0.057(D-v)}
\]
and thus 
\[
\# \Big\{n\le x : p^v|n, p^{\ell v}\nmid \binom{2n}{n} \Big\} \ll
\frac{x}{p^v} \pfrac{e^{0.057}(1+\frac13)}{2}^{D-v}
\ll \frac{x}{p^v} e^{-(D-v)/3},
\]
and the claimed inequality follows.
\end{proof}

\begin{prop}\label{smallprfact}
For large $x$, let $\del$ satisfy
$0 < \del \le 1$. For any $1\le n\le x$,
write $n=A_nB_n$, where $P^+(A_n)\le x^{\del} <P^{-}(B_n)$.  Fix $\ell\in\NN$.
Then
\[
\# \Big\{n\le x : A_n^\ell \nmid \binom{2n}{n} \Big\} \ll_\ell x \e^{-1/(300\ell \del)}.
\] 
\end{prop}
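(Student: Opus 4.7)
The plan is to reduce the proposition to a union-bound application of Lemma \ref{smallprpwr}. If $A_n^\ell \nmid \bn$, then since $A_n^\ell = \prod_{p \mid A_n} p^{\ell v_p(n)}$ with the product over primes $p \le x^\delta$, some prime power must fail to divide: there exist $p \le x^\delta$ and $v \ge 1$ with $p^v \| n$ and $p^{\ell v} \nmid \bn$. A union bound gives
\[
\# \{n \le x : A_n^\ell \nmid \bn\} \le \sum_{p \le x^\delta} \sum_{v=1}^\infty \# \{n \le x : p^v \mid n, \; p^{\ell v} \nmid \bn\}.
\]
The inner terms are then estimated differently according to whether the hypothesis $p^{\ell v} \le x^{1/100}$ of Lemma \ref{smallprpwr} holds.

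For $p^{\ell v} \le x^{1/100}$, Lemma \ref{smallprpwr} gives $\ll x^{1-1/(3\log p)}\e^{v/3}/p^v$; since $\e^{1/3}/p < 1$ for every prime, summing the geometric series in $v$ yields a per-prime bound $\ll x^{1-1/(3\log p)}/p$ for $p \ge 3$, plus a negligible $O(x^{1-1/(3\log 2)})$ from $p = 2$. For $p^{\ell v} > x^{1/100}$, i.e., $v > \log x/(100\ell\log p)$, I would just use the trivial $\#\{n\le x:p^v\mid n\}\le x/p^v$; the geometric tail sums to $\ll x/p^{V(p)+1}$ with $V(p) = \fl{\log x/(100\ell\log p)}$, and since $p^{V(p)+1} \ge x^{1/(100\ell)}$ this is at most $x^{1-1/(100\ell)}$ per prime. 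Summed over $p \le x^\delta$, that second contribution is $\ll x^{1+\delta - 1/(100\ell)}/\log x$, which vanishes identically when $x^\delta < 2$ and is otherwise of strictly smaller order than $x\e^{-1/(300\ell\delta)}$.

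The main obstacle is the Lemma-regime sum over primes. A crude use of Mertens' estimate $\sum_{p\le y}1/p\asymp \log\log y$ yields only $\ll x\e^{-1/(3\delta)}\log\log x$, and the extraneous $\log\log x$ factor is fatal when $\ell\delta$ is of order $1$, because there the target $x\e^{-1/(300\ell\delta)}$ is only marginally below $x$. To eliminate that factor, I would apply partial summation against the prime-counting function: with the substitution $u = \log p/\log x$, the weight $1/p$ cancels exactly against the PNT-density of primes, converting $\sum_{p \le x^\delta}x^{1-1/(3\log p)}/p$ into essentially $x\int_0^\delta \e^{-1/(3u)}\,du/u = x\cdot E_1(1/(3\delta))$, where $E_1$ is the exponential integral. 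A direct computation then yields $E_1(1/(3\delta)) \ll \e^{-1/(300\ell\delta)}$ uniformly for $\delta \in (0,1]$ and $\ell \ge 1$ (using the asymptotic $E_1(z)\sim \e^{-z}/z$ as $z\to\infty$ for small $\delta$, and a numerical check for $\delta$ of order $1$); the constant $1/300$ in the proposition is calibrated precisely so that this inequality goes through.
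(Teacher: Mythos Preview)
Your overall strategy---union bound over $p\le x^\delta$ and $v$, Lemma~\ref{smallprpwr} for small $v$, trivial bound for large $v$---is exactly the paper's. But there is a genuine gap in your treatment of the large-$v$ (secondary) contribution.

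You assert that $x^{1+\delta-1/(100\ell)}/\log x$ is ``of strictly smaller order than $x\,\e^{-1/(300\ell\delta)}$'' whenever $x^\delta\ge 2$. This is false once $\delta>1/(100\ell)$: take $\delta=1$, $\ell=1$, and the left side is $x^{1.99}/\log x$ while the right side is $\approx 0.997\,x$. The bound you wrote simply does not absorb the secondary term across the full range $0<\delta\le 1$.

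The fix is a one-line reduction that the paper makes at the very start and that you omit: if $\delta>1/(300\ell)$ then $\e^{-1/(300\ell\delta)}>\e^{-1}$, so the claimed inequality is trivially $\ll_\ell x$ and there is nothing to prove. Hence one may assume $\delta\le 1/(300\ell)$, and then the secondary contribution is $\ll x^{1+1/(300\ell)-1/(100\ell)}=x^{1-1/(150\ell)}$, which \emph{is} dominated by $x\,\e^{-1/(300\ell\delta)}$ (using $\delta>\log 2/\log x$). This reduction is also what the constant $1/300$ is calibrated for---not, as you suggest, the $E_1$ inequality, which would hold with any constant below $1/3$.

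Once the reduction is in place, your $E_1$ analysis of the main term is correct but heavier than needed. Your observation that a naive Mertens bound leaves an unwanted $\log\log x$ is legitimate (the paper glosses over this), but a dyadic split $p\in(x^{\delta/2^{j+1}},x^{\delta/2^j}]$ already gives
\[
\sum_{p\le x^\delta}\frac{x^{-1/(3\log p)}}{p}\ \ll\ \sum_{j\ge 0}\e^{-2^j/(3\delta)}\ \ll\ \e^{-1/(3\delta)}\ \le\ \e^{-1/(300\ell\delta)},
\]
with no appeal to partial summation or the exponential integral.
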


\begin{proof}
We may assume that $\frac{\log 2}{\log x} < \del \le 1/(300\ell)$, else the statement
is trivial.  
Hence, by Lemma \ref{smallprfact},
\dalign{
\# \Big\{n\le x : A_n^\ell \nmid \binom{2n}{n} \Big\} &\le 
\sum_{p\le x^\del} \left[ \sum_{v\le \frac{\log x}{100\ell \log p}}\# \Big\{n\le x : p^v|n, p^{\ell v} \nmid \bn \Big\} + \sum_{v>   \frac{\log x}{100\ell \log p}} \frac{x}{p^v} \right]\\
&\ll \sum_{p\le x^\del} \Bigg[ x^{1-1/(3\log p)} 
\sum_{v\le \frac{\log x}{100\ell \log p}} \frac{e^{v/3}}{p^v} +
x^{1-\frac{1}{100\ell}} \Bigg] \\
&\ll x^{1+\del-\frac{1}{100\ell}} + x \sum_{p\le x^\del} \frac{x^{-1/(3\log p)}}{p} \\
&\ll x^{1-\frac{1}{150\ell}} + x e^{-\frac{1}{3\del}}\\
&\ll x e^{-\frac{1}{300\ell \del}}.
}
\end{proof}

Next, we prove analogous bounds for integers with a given smallest prime factor.

\begin{prop}\label{coprime-smpr}
The number of integer $n\le x$ for which $(n,\bn)=1$
and $n$ has a prime factor smaller than $n^{\delta}$
is $O(\frac{x}{\log x} e^{-1/(3\delta)})$.
\end{prop}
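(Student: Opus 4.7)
The plan is to decompose $T := \{n\le x:(n,\bn)=1,\,P^-(n)\le n^\delta\}$ by the smallest prime factor $p=P^-(n)$, and bound each piece using Kummer's criterion together with a sieve argument. For $n\in T$ with $P^-(n)=p\le x^\delta$, Kummer \cite{K} applied to $p$ forces every base-$p$ digit of $n$ to be $<p/2$; writing $n=pm$ with $P^-(m)\ge p$, equivalently all base-$p$ digits of $m$ are $<p/2$.

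First I would establish the per-prime bound
\[
|T_p|\ll \frac{x}{p\log p}\cdot 2^{-\lfloor\log x/\log p\rfloor}
\quad \text{uniformly in } p\le x^\delta,
\]
where $T_p=\{n\in T:P^-(n)=p\}$. Two ingredients combine: the digit condition restricts $m$ to a set $S_p\subset[1,x/p]$ of cardinality at most $\lceil p/2\rceil^D\ll (x/p)\cdot 2^{-D}\cdot(1+O(D/p))$ with $D=\lfloor\log x/\log p\rfloor$, and the sieve condition $P^-(m)\ge p$ should, by Mertens' theorem, cut the count by a further factor $\prod_{q<p}(1-1/q)\asymp 1/\log p$.

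Given this per-prime estimate, the conclusion follows by partial summation. Using $\pi(t)\sim t/\log t$ and the substitution $u=\log x/\log p$,
\[
|T|\le\sum_{p\le x^\delta}|T_p|\ll x\sum_{p\le x^\delta}\frac{2^{-\log x/\log p}}{p\log p}\asymp \frac{x}{\log x}\int_{1/\delta}^\infty 2^{-u}\,du\ll \frac{x}{\log x}\cdot 2^{-1/\delta}.
\]
Since $\log 2>1/3$, we have $2^{-1/\delta}=e^{-(\log 2)/\delta}\le e^{-1/(3\delta)}$, which yields the stated bound.

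\textbf{Main obstacle.} The technical crux is rigorously combining the digit constraint with the small-prime sieve on $S_p$. The set $S_p$ is defined arithmetically via base-$p$ expansions rather than by residues modulo primes $q<p$, so its equidistribution in such residue classes must be established by hand. I would do this by expanding the indicator of $S_p$ as a product of geometric exponential sums over digit positions and bounding its non-trivial Fourier coefficients, closely paralleling the methods developed in Section~\ref{sec:expsum}. A naive alternative using only Lemma~\ref{smallprpwr} (with $v=\ell=1$) produces $|\{n\le x:p\mid n,\,p\nmid\bn\}|\ll x^{1-1/(3\log p)}/p$, which sums to only $O(xe^{-1/(3\delta)}\log\log x)$ -- off by a factor $\log x\cdot\log\log x$ -- confirming that the sieve step, rather than bare digit-counting, is essential.
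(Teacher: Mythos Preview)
Your decomposition by the smallest prime $p$, the use of Kummer's criterion to restrict the base-$p$ digits, and the final summation over $p\le x^\delta$ are all correct and match the paper's route. The difference is in how the factor $1/\log p$ is obtained.

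You propose to sieve the entire digit-restricted set $S_p\subset[1,x/p]$ and rightly flag equidistribution of $S_p$ in residue classes modulo primes $q<p$ as the crux, to be handled by Fourier-expanding the digit indicator. That would work, but it is substantially more labor than necessary. The paper sidesteps this obstacle completely with a one-line device: instead of sieving all of $S_p$, fix the digits $b_2,\ldots,b_D$ (imposing the constraint $b_j<p/2$ on $b_2,\ldots,b_{D-1}$, exactly as in Lemma~\ref{smallprpwr} with $\ell=v=1$) and sieve only on the single digit $b_1$. As $b_1$ ranges over $\{0,1,\ldots,p-1\}$, the integer $n=\sum_{j\ge 1} b_j p^j$ runs through an arithmetic progression of length $p$ with common difference $p$, so the condition $P^-(n)\ge p$ is a standard short-AP sieve problem and \cite[Theorem~2.2]{HR} gives $\#\{b_1:P^-(n)\ge p\}\ll p/\log p$ directly---no equidistribution of $S_p$ required. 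This yields $|T_p|\ll x^{1-1/(3\log p)}/(p\log p)$, and your own summation argument then finishes.

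In short: your plan is sound, but the ``main obstacle'' you identify is an artifact of trying to sieve the whole digit set at once; sacrificing the digit restriction on one position in exchange for an AP on which to sieve removes it entirely.
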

\begin{proof}
Fix $p$ and consider those $n$ with smallest prime factor $p$
and such that $p\nmid \bn$.
We argue as in the $\ell=1$ case of Lemma \ref{smallprfact},
except that
for fixed $b_2,\ldots,b_{D}$ we bound the number of possible 
$b_1$ such that $\sum_{j=1}^D p^j b_j$ has no prime factor less than $p$ with a sieve (e.g., \cite[Theorem 2.2]{HR}), obtaining
\[
\# b_1 \ll \frac{p}{\log p}.
\]
It follows that
\[
\# \Big\{ n\le x : n \text{ has smallest prime factor }p, p\nmid \bn  \Big\} \ll \frac{x^{1-\frac{1}{3\log p}}.}{p\log p} .
\]
Summing over $p\le x^{\delta}$ completes the proof.
\end{proof}

%
%
%
{\Large \section{Exponential sum estimates}\label{sec:expsum}}
%
%
%
%

We gather together in this section various estimates for
exponential sum which we will need for the proof of Theorem
\ref{main_theorem}.

The first lemma is the 'Weyl-van der Corput inequality'
(see Theorems 2.2, 2.8 in \cite{GK}).  
It is far from the best result of its kind, but has a relatively short proof and suffices for
our purposes.

\begin{lem}\label{Weyl}
Let $j\ge 2$ be an integer, let $I$ be an interval and suppose that $f\in C^{j}(I)$ and that
\[
\lam \le |f^{(j) }(x)| \le \a \lam
\]
where $\lam>0$, $\a\ge 1$.  Then
\[
\sum_{n\in I} e(f(n)) \ll |I|(\a^2\lam)^{\frac{1}{4J-2}}+|I|^{1-\frac{1}{2J}} \a^{\frac{1}{2J}}  +|I|^{1-\frac{2}{J}+\frac{1}{J^2}}\lam^{-\frac{1}{2J}},
\]
where $J=2^{j-2}$.
\end{lem}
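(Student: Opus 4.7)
The plan is to induct on $j \ge 2$, using the Weyl-van der Corput differencing trick to reduce from derivative order $j$ to $j-1$.

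For the base case $j=2$ (so $J=1$), the claimed estimate reads
\[
\left|\sum_{n\in I} e(f(n))\right| \ll |I|(\a^2\lam)^{1/2} + |I|^{1/2}\a^{1/2} + \lam^{-1/2}.
\]
Since $|f''|\ge\lam>0$ throughout $I$, the derivative $f'$ is strictly monotonic, and its image on $I$ is an interval of length at most $|I|\a\lam$. I would cover $I$ by the maximal subintervals on which $\lfloor f'\rfloor$ is constant and apply, on each one, the Kuzmin-Landau inequality: if $f'$ is monotonic and $\|f'(x)\|\ge\delta$ for every $x$ on a subinterval, then the exponential sum over that piece is $\ll 1/\delta$. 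Subintervals where $\|f'\|$ fails to be bounded away from $0$ contribute at most their lengths, controlled via the lower bound on $|f''|$, and summing these pieces yields the three terms on the right.

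For the inductive step, assume the lemma holds with $j$ replaced by $j-1$ (so with $J'=J/2$). For any integer $H$ with $1\le H\le|I|$, the Weyl-van der Corput squaring inequality gives
\[
\left|\sum_{n\in I}e(f(n))\right|^2 \ll \frac{|I|+H}{H}\(|I| + \sum_{h=1}^{H-1}\left|\sum_{n\in I_h} e(g_h(n))\right|\),
\]
where $g_h(x)=f(x+h)-f(x)$ and $I_h=I\cap(I-h)$. The mean value theorem gives $g_h^{(j-1)}(x)=h\,f^{(j)}(\xi)$ for some $\xi\in[x,x+h]$, so $|g_h^{(j-1)}|$ lies in $[h\lam,h\a\lam]$. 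Applying the inductive hypothesis to the inner sum with parameters $\lam'=h\lam$, $\a'=\a$ and $J'=J/2$, then summing over $h$, dividing by $H$ and extracting the square root, produces a bound whose nontrivial terms I then balance by choosing $H$ optimally in $[1,|I|]$.

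The main obstacle will be tracking exponents through the induction. One must verify that the three exponents $\tfrac{1}{4J-2}$, $\tfrac{1}{2J}$ and $1-\tfrac{2}{J}+\tfrac{1}{J^2}$ (paired with $-\tfrac{1}{2J}$ on $\lam$) all propagate consistently when $J$ doubles, and that the trivial contribution $|I|/H^{1/2}$ coming from the ``$|I|$'' term inside the van der Corput inequality is absorbed into the first term at the optimal $H$. Indeed, the slightly unusual exponent $\tfrac{1}{4J-2}$ (rather than $\tfrac{1}{4J}$) traces precisely to this absorption step, and it is here that the bookkeeping must be done with care so that the induction closes on the stated form.
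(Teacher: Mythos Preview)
The paper does not supply its own proof of this lemma; it simply cites Theorems~2.2 and~2.8 of Graham--Kolesnik \cite{GK}. Your outline---the $j=2$ base case via Kuzmin--Landau plus the second-derivative test, and the inductive step via Weyl--van der Corput differencing with $g_h(x)=f(x+h)-f(x)$---is exactly the argument those theorems encode, so your approach matches the paper's (outsourced) proof. Your sketch is correct in structure; the exponent bookkeeping you flag as the main obstacle is indeed routine but tedious, and is carried out in full in \cite{GK}.
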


We apply this lemma to bound a certain class of exponential sums.

\begin{lem}\label{expsumf}
Let $N\in \NN$, and
\be\label{fu}
f(u) = \a u + \sum_{r=r_1}^{r_2} \frac{\beta_r}{u^r},
\ee
where $\a\in \RR$, $1\le r_1\le r_2$, and for some $A\in[1,N^{1/2}]$ we have
\be\label{betar}
|\b_{r_1}| \ge N^{r_1} A, \quad |\b_{r}/\b_{r_1}| \le N^{(r-r_1)/2} \; (r_1\le r\le r_2).
\ee
Then
\[
\max_{I \subset (N,2N]} \sum_{n\in I} e(f(n)) \ll_{r_2} N \( N^{-1/2^{j}} + A^{-1/4} \),
\]
where
\be\label{jdef}
j = 3 + \fl{\frac{\log\(\frac{|\b_{r_1}|}{ AN^{r_1}}\)}{\log N}}.
\ee
\end{lem}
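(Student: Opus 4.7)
The plan is to apply the Weyl--van der Corput inequality (Lemma \ref{Weyl}) to $f$ with the exponent $j$ given by \eqref{jdef}. First I compute
\[
f^{(j)}(u) = \sum_{r = r_1}^{r_2} (-1)^j\, r(r+1)\cdots(r+j-1)\,\beta_r\, u^{-(r+j)}
\]
and argue that on $(N, 2N]$ the $r = r_1$ term dominates. The hypothesis $|\beta_r/\beta_{r_1}| \le N^{(r-r_1)/2}$ gives each $r > r_1$ term an extra factor of $N^{-(r-r_1)/2}$ relative to the main term, so the tail is smaller by a factor $\ll_{r_2} N^{-1/2}$ and, for $N$ sufficiently large, $|f^{(j)}(u)| \asymp_{j, r_2} |\beta_{r_1}|/N^{r_1 + j}$. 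This lets me take $\lambda \asymp |\beta_{r_1}|/N^{r_1 + j}$ and $\alpha = O_{j, r_2}(1)$ in Lemma \ref{Weyl}.

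Set $B := |\beta_{r_1}|/N^{r_1}$, so $\lambda \asymp B/N^j$, and note that \eqref{jdef} is precisely the statement $AN^{j-3} \le B < AN^{j-2}$. Using the basic identities $J = 2^{j-2}$ and $4J = 2^j$, together with the hypothesis $1 \le A \le N^{1/2}$, the first two terms of Lemma \ref{Weyl} are easy to handle: the upper bound $B < AN^{j-2}$ gives
\[
N \lambda^{1/(4J-2)} \le N^{1 - 3/(8J-4)} \le N \cdot N^{-1/2^j},
\]
while the second term $N^{1 - 1/(2J)} = N^{1 - 1/2^{j-1}}$ is trivially $\le N \cdot N^{-1/2^j}$.

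I expect the main obstacle to be the third term $N^{1 - 2/J + 1/J^2}\lambda^{-1/(2J)}$, which must yield the two different shapes $N^{-1/2^j}$ and $A^{-1/4}$ appearing in the conclusion. Using the lower bound $B \ge AN^{j-3}$ to bound $\lambda$ from below reduces it to $N^{1 - 1/(2J) + 1/J^2} A^{-1/(2J)}$; for $j = 3$ this is exactly $N \cdot A^{-1/4}$, while for $j \ge 4$ the inequality $1/J^2 \le 1/(4J)$ (valid for $J \ge 4$) bounds it by $N \cdot N^{-1/(4J)} A^{-1/(2J)}$. The crucial observation is that $4J = 2^j$ yields
\[
N^{1/(4J)} A^{1/(2J)} = N^{1/2^j}\, A^{1/2^{j-1}} \ge \min\bigl(N^{1/2^j},\, A^{1/4}\bigr),
\]
as can be verified by splitting on whether $N^{1/2^j} \le A^{1/4}$ or the reverse (using $A \ge 1$). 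Taking reciprocals bounds the third term by $N(N^{-1/2^j} + A^{-1/4})$, completing the argument.
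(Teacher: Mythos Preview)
Your argument is correct and follows the same route as the paper: compute $f^{(j)}$, show the $r=r_1$ term dominates, and feed the resulting $\lambda\asymp |\beta_{r_1}|/N^{r_1+j}$ into Lemma~\ref{Weyl}. Two small remarks. First, you should note at the outset (as the paper does) that one may assume $2^j\le \log N$, since otherwise $N^{-1/2^j}\ge e^{-1}$ and the bound is trivial; this is what makes your ``$N$ sufficiently large'' and ``$\alpha=O_{j,r_2}(1)$'' harmless, because the $j$-dependent constants only ever appear raised to powers $O(1/J)$ and hence stay $O_{r_2}(1)$. Second, for the third term when $j\ge 4$ the paper takes a slightly shorter path: it simply discards $A$ via $A\ge 1$, bounding $\lambda^{-1/(2J)}\ll (N^3)^{1/(2J)}$ and obtaining $N^{-1/(4J)}=N^{-1/2^j}$ directly. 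Your alternative of keeping $A$ and checking $N^{1/(4J)}A^{1/(2J)}\ge\min(N^{1/2^j},A^{1/4})$ is equally valid and yields the same conclusion.
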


\begin{proof}
We apply Lemma \ref{Weyl}.  Firstly, we may assume
that $N$ is sufficiently large and that 
\be\label{jrA}
j\le \frac{\log\log N}{\log 2},
\ee
 for otherwise the 
conclusion is trivial.  Also note that $j\ge 3$.
Denoting by $r^{(j)}$ the rising factorial $r(r+1)\cdots(r+j-1)$,
and using \eqref{betar},
we have for $N<u\le 2N$ the relation
\dalign{
f^{(j)}(u)&=(-1)^j \sum_{r=r_1}^{r_2} \frac{r^{(j)} \b_r}{u^{r+j}}\\
&=(-1)^j \frac{r_1^{(j)}\b_{r_1}}{u^{r_1+j}}\Bigg(1+ O\Bigg(
\sum_{r=r_1+1}^{r_2}  \frac{(r^{(j)}/r_1^{(j)}) |\b_r/\b_{r_1}|}
{N^{r-r_1}}\Bigg)\Bigg)\\
&= (-1)^j \frac{r_1^{(j)}\b_{r_1}}{u^{r_1+j}}\Bigg(1+ O\bigg( 
\sum_{r=r_1+1}^{r_2}  \frac{(r/r_1)^j}{N^{(r-r_1)/2}}\bigg)\Bigg) \\
&= \(1+O_{r_2}\(N^{-1/2}\)\) (-1)^j \frac{r_1^{(j)}\b_{r_1}}{u^{r_1+j}}.
}
For large enough $N$ it follows that
\[
\lam \le |f^{(j)}(u)| \le \a\lam, \quad 
\lam=\frac{r_1^{(j)}|\b_{r_1}|}{2(2N)^{r_1+j}},\quad \a= 2^{r_1+j+2}.
\]
Inserting this bound into
 Lemma \ref{Weyl}, we have
\be\label{sumefn}
\frac{1}{N} \sum_{n\in I} e(f(n)) \ll_{r_2}
\lam^{\frac{1}{4J-2}}+N^{-\frac{1}{2J}}  +N^{-\frac{2}{J}+\frac{1}{J^2}}\lam^{-\frac{1}{2J}},
\ee
where $J=2^{j-2}$.  We note that from \eqref{betar} and
the definition of $j$,
\[
N^2 \frac{|\beta_{r_1}|}{AN^{r_1}} \le N^j \le N^3 \frac{|\beta_{r_1}|}{AN^{r_1}}
\]
and hence that
\[
\frac{A}{2^{r_1+j+1}N^3} \le \lambda \le r_1^{(j)} \pfrac{A}{N^2} \le r_1^{(j)} N^{-3/2}.
\]
When $j=3$, therefore, the right side of \eqref{sumefn} is 
\[ 
\ll_{r_2} \lam^{1/6}+N^{-1/4}+N^{-3/4} \lam^{-1/4} \ll N^{-1/4}+A^{-1/4}.
 \]
Now assume that $j\ge 4$ so that $J\ge 4$.  Then the right side of
\eqref{sumefn} is 
\[
\ll_{r_2} N^{-\frac{3/2}{4J-2}}+N^{-\frac{1}{2J}}+ N^{-\frac{7}{4J}} (N^3)^{\frac{1}{2J}}\ll_{r_2} N^{-\frac{1}{4J}}.
\]
Combining the two cases, $j=3$ and $j>3$, this concludes the proof.
\end{proof}

We now apply Lemma \ref{expsumf} to bound analogous sums over primes.

\begin{lem}\label{expsumprimes}
Assume $f$ satisfies \eqref{fu}, where the coefficients satisfy 
\eqref{betar} for some $A\in [1,N^{1/6}]$.  Then 
\[
\max_{I\subset (N,2N]} \sum_{p\in I} e(f(p)) \ll_{r_2} N (\log N)^4
\big( N^{-\frac{1}{3\cdot 2^j}} + A^{-1/10} \big),
\]
where $j$ is given by \eqref{jdef}.
\end{lem}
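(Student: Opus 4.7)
The plan is to apply Vaughan's identity to reduce $\sum_{p\in I} e(f(p))$ to a combination of Type I sums $\sum_{m\le M} a_m \sum_n e(f(mn))$ and Type II sums $\sum_{U<m\le N/V} \sum_n a_m b_n e(f(mn))$ (with divisor-type coefficients satisfying $|a_m|\le \tau(m)$, $|b_n|\le \tau(n)$), each of which can be bounded using Lemma \ref{expsumf}. This is the standard template for passing from exponential sum estimates over integers to estimates over primes, and it is responsible for the exponent $1/2^j$ shrinking to $1/(3\cdot 2^j)$ in the final bound. The parameters $U, V$ will be chosen at the end to balance the two contributions, and the $\log p$ weight coming out of Vaughan's identity is removed by partial summation.

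For the Type I contribution, fix $m\le UV$ and consider $\tilde f(u):= f(mu) = \alpha m u + \sum_{r=r_1}^{r_2} (\beta_r/m^r) u^{-r}$ on an interval of length $\ll N/m$. The new leading coefficient satisfies $|\beta_{r_1}/m^{r_1}| \ge (N/m)^{r_1} A$ and the ratios $|(\beta_r/m^r)/(\beta_{r_1}/m^{r_1})| \le (N/m)^{(r-r_1)/2}$, so Lemma \ref{expsumf} applies directly with $N$ replaced by $N/m$ and the value of $j$ unchanged. Summing the per-$m$ bound against the trivial estimate on $a_m$ yields the Type I contribution, up to logarithmic factors. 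For the Type II contribution, apply Cauchy--Schwarz in the outer variable, expand the square, and swap order of summation to obtain
\[
|\Sigma_{II}|^2 \ll \frac{N}{V}\,(\log N)^{C} \sum_{n_1,n_2} \Bigl|\sum_m e\bigl(f(mn_1)-f(mn_2)\bigr)\Bigr|.
\]
For $n_1\ne n_2$ the $m$-phase is $\alpha m(n_1-n_2) + \sum_r \beta_r(n_1^{-r}-n_2^{-r})m^{-r}$, whose leading coefficient $\beta_{r_1}(n_1^{-r_1}-n_2^{-r_1})$ has size $\asymp |\beta_{r_1}|\,|n_1-n_2|/n^{r_1+1}$. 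Hence Lemma \ref{expsumf} applies again, but with a degraded effective value of $A$; splitting $(n_1,n_2)$ dyadically according to $|n_1-n_2|$, handling the diagonal trivially, and collating the bounds gives the Type II estimate.

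The main obstacle is tracking the degradation of the effective $A$-parameter in the Type II step and balancing the resulting bound against the Type I estimate by the choice of $U,V$. The classical choice $U=V\approx N^{1/3}$ produces the $N^{-1/(3\cdot 2^j)}$ exponent (a ``cube root'' of the Type I savings), while careful accounting of the $A$-dependence, dyadic partitioning of $|n_1-n_2|$, and checking that the hypotheses of Lemma \ref{expsumf} remain valid in each range, yields the weaker $A^{-1/10}$ exponent in place of the $A^{-1/4}$ one from Lemma \ref{expsumf}. The $(\log N)^4$ factor accumulates from divisor-function moment bounds on $\sum |a_m|^2$ and $\sum |b_n|^2$, from Cauchy--Schwarz, and from the removal of the $\log p$ weight via partial summation.
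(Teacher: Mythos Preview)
Your proposal is correct and follows essentially the same approach as the paper: Vaughan's identity with $U=V=N^{1/3}$, direct application of Lemma~\ref{expsumf} to the Type~I sums, and Cauchy--Schwarz plus expansion of the square followed by Lemma~\ref{expsumf} on the difference phase for the Type~II sums. The only cosmetic differences are that the paper uses a single near-diagonal threshold $|c_1-c_2|\le RN/(BA^{1/5})$ rather than a full dyadic decomposition in $|n_1-n_2|$, and in the paper's Type~II sum one coefficient sequence is $\Lambda(c)$ (bounded by $\log N$) rather than a divisor function.
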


\begin{proof}
Our technique is standard.  Throughout, constants implied by
$O-$ and $\ll$- may depend on $r_1,r_2$.  We begin by applying Vaughan's identity, taking $U=V=N^{1/3}$ 
in \cite[p. 139]{Dav}.  This gives
\be\label{sump-1}
 \sum_{p\in I} e(f(p)) = O(N^{1/2})+\sum_{n\in I} \Lambda(n)e(f(n)) = 
 O(N^{1/2}) + S_2 + S_3 + S_4,
\ee
where, following the notation from \cite{Dav} (observe that
$S_1$ is trivially zero in our case), we define
\begin{align*}
S_2 &= - \sum_{a\le N^{1/3}} \Lambda(a) \sum_{b\le N^{1/3}} \mu(b)
\ssum{abc\in I} e(f(abc)), \\
S_3 &= \sum_{b\le N^{1/3}} \sum_{bc\in I} \mu(b) (\log c) e(f(bc)), \\
S_4 &= \sum_{b>N^{1/3}} h(b) \ssum{bc \in I \\ c>N^{1/3}} \Lambda(c) e(f(bc)),
\end{align*}
where 
\[
h(b) = \ssum{d|b \\ d>N^{1/3}} \mu(d).
\]
We may apply Lemma \ref{expsumf} directly to $S_2$ and to $S_3$;
these are called ``Type I'' sums in the modern literature.
For $S_2$, we fix $a$ and $b$ and apply Lemma \ref{expsumf} with
$N$ replaced by $N/ab$ and $\beta_r$ replaced by 
$\beta_r/(ab)^r$.  We check that
\[
A \le N^{1/6} \le (N/ab)^{1/2}, \qquad \left|\frac{\beta_r'}{\beta_{r_1}'}\right| =  \left|\frac{\beta_r}{\beta_{r_1}}\right| (ab)^{-(r-r_1)}
\le \pfrac{N}{ab}^{(r-r_1)/2}.
\]
Thus, for any $a,b$ we have
\[
\ssum{abc\in I} e(f(abc)) \ll \frac{N}{ab} \big( (N/ab)^{-1/2^j} + A^{-1/4} \big)
\]
and hence that
\be\label{S1}
S_2 \ll N(\log^2 N) \big( N^{-\frac{1}{3\cdot 2^j}}+A^{-1/4} \big).
\ee
Bounding the inner sum over $c$ in  $S_2$ is exactly analogous,
where we use partial summation to remove the logarithm factor.
Since $N/b \ge N^{2/3}$, we obtain a stronger bound
\be\label{S2}
S_3\ll N(\log^2 N) \big( N^{-\frac{2}{3\cdot 2^j}}+A^{-1/4} \big).
\ee

For $S_4$, we break up the range $b \in (N^{1/3},2N^{2/3}]$ into $O(\log N)$
dyadic intervals of the form $(B,2B]$ where $N^{1/3}\le B \le 2N^{2/3}$.   Then we use Cauchy-Schwarz, followed by the trivial bound
$|h(b)| \le \tau(b)$ to get
\dalign{
S_4 &\ll (\log N) \max_B \Big| \sum_{B < b\le 2B} h(b)
\sum_{bc\in I} \Lambda(c) e(f(bc)) \Big| \\
&\le (\log N) \max_B \Big( \sum_{B<b\le 2B} h(b)^2 \Big)^{1/2}
\Big( \sum_{B<b\le 2B} \Big| \sum_{bc\in I} \Lambda(c) e(f(bc)) \Big|^2  \Big)^{1/2}\\
&\ll (\log N)^{5/2} \max_B B^{1/2} \Big( \sum_{B<b\le 2B} \Big| \sum_{bc\in I} \Lambda(c) e(f(bc)) \Big|^2  \Big)^{1/2}.
}
Next, we expand the square and then interchange the order of summation:
\be\label{sumbc1c2}
 \sum_{B<b\le 2B} \Big| \sum_{bc\in I} \Lambda(c) e(f(bc)) \Big|^2 
 =  \sum_{\frac{N}{2B} < c_1,c_2 \le \frac{2N}{B}} 
 \Lambda(c_1) \Lambda(c_2) \sum_{b\in J} e(f(bc_1)-f(bc_2)), 
\ee
where
\[
J = \{ B< n \le 2B : bc_1 \in I, bc_2 \in I \}
\]
is a subinterval of $(B,2B]$.
Let $R$ be a large constant, depending on $r_1,r_2$.  The terms above with $|c_1-c_2| \le \frac{RN}{BA^{1/5}}$ contribute
at most $O(N^2(\log N)^2/(A^{1/5} B))$ to the right side of \eqref{sumbc1c2}.
Now suppose that $|c_1-c_2| > \frac{RN}{BA^{1/5}}$.  Write
\[
f(bc_1)-f(bc_2) = \alpha b(c_1-c_2) + \sum_{r=r_1}^{r_2} \frac{\beta_r'}{b^r}, \qquad \beta_r' = \beta_r \Big( \frac{1}{c_1^r} - \frac{1}{c_2^r} \Big).
\]
We apply Lemma \ref{expsumf} with $\beta_r$ replaced by $\beta_r'$,
$N$ replaced by $B$, and $A$ replaced by
\[
A' = \frac{AN^{r_1} \beta_{r_1}'}{B^{r_1} \beta_{r_1}}.
\]
Since
\[
|\beta_r'| \order |\beta_r| \frac{|c_1-c_2|}{c_1^{r+1}},
\]
we see that
\begin{align*}
\left| \frac{\beta_{r}'}{\beta_{r_1}'} \right| &\ll N^{(r-r_1)/2} c_1^{-(r-r_1)} \\
&\ll N^{(r-r_1)/2} (N/B)^{-(r-r_1)} \\
&\ll B^{(r-r_1)/2} (N/B)^{-(r-r_1)/2} \\
&\ll B^{(r-r_1)/2} N^{-(r-r_1)/6},
\end{align*}
so that the hypotheses \eqref{betar} hold.   Also, $A' \ge A^{4/5}$ if $R$ is large enough, and therefore
\[
 \sum_{b\in J} e(f(bc_1)-f(bc_2)) \ll B \big( B^{-1/2^j}+ A^{-1/5}   \big).
\]
Summing over all pairs $c_1,c_2$ we see that the expression in
\eqref{sumbc1c2} is 
\[
\ll \frac{N^2}{B}(\log N)^2 (N^{-1/(3\cdot 2^j)}+A^{-1/5}),
\]
 and we conclude that
 \be\label{S3}
 S_4 \ll N (\log N)^4 \big( N^{-\frac{1}{3\cdot 2^j}} + A^{-1/10} \big).
 \ee
 Inserting \eqref{S1}, \eqref{S2} and \eqref{S3} into \eqref{sump-1}, this completes the proof.
\end{proof}

%
%
%
%
%
{\Large \section{Detecting fractional parts}
\label{sec:fracparts}}
%
%
%

In this section we apply harmonic analysis to detect
the simultaneous fractional parts of ratios of primes.
 Denote by  $\{x\}$ the fractional part of $x$.

We begin with a result of Selberg.

\begin{lem}\label{Selberg}
For any $K\in \NN$ and any non-empty interval $I\subset \RR/\ZZ$,
there is a trigonometric polynomial $S_{K,I}^+(x)=\sum_{|n|\le K} a_n e(nx)$
which majorizes the indicator function of $I$ and a trigonometric
polynomial $S_{K,I}^-(x)=\sum_{|n|\le K} b_n e(nx)$ which minorizes
the indicator function of $I$, and which satisfy the following:
\begin{itemize}
\item $\max(|a_n|,|b_n|) \le 4/(|n|+1)$ for all $n$.
\item $\int_{0}^1 S_{K,I}(x)^{\pm}\, dx = \text{length}(I) \pm \frac{1}{K+1}$.
\end{itemize}
\end{lem}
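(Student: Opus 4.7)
The plan is to invoke the classical Beurling--Selberg extremal function construction and then periodize via Poisson summation: first build an entire majorant and minorant of $\operatorname{sgn}$ on $\RR$ with prescribed exponential type, then assemble from these a majorant and minorant of $\chi_{[\alpha,\beta]}$ on $\RR$, and finally fold onto $\RR/\ZZ$ to produce a trigonometric polynomial. This is standard; see, for instance, Vaaler's 1985 paper on Hilbert's inequality.

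Concretely, I will start from Beurling's function $B\colon\CC\to\CC$, an entire function of exponential type $2\pi$ satisfying $B(x)\ge\operatorname{sgn}(x)$ for all real $x$, $\int_{\RR}(B(x)-\operatorname{sgn}(x))\,dx=1$, and $\widehat B$ supported in $[-1,1]$; the matching minorant of $\operatorname{sgn}$ is $-B(-x)$. Lifting $I$ to $[\alpha,\beta]\subset\RR$ with $\beta-\alpha\le 1$, setting $\delta=K+1$, I define
\[
M^{+}(x)=\tfrac{1}{2}\bigl[B(\delta(\beta-x))+B(\delta(x-\alpha))\bigr],
\]
\[
M^{-}(x)=-\tfrac{1}{2}\bigl[B(-\delta(\beta-x))+B(-\delta(x-\alpha))\bigr].
\]
Since $\chi_{[\alpha,\beta]}(x)=\tfrac{1}{2}(\operatorname{sgn}(\beta-x)+\operatorname{sgn}(x-\alpha))$ almost everywhere, $M^{+}$ majorizes and $M^{-}$ minorizes $\chi_{[\alpha,\beta]}$, while the extremal property of $B$ yields $\int_{\RR}(M^{\pm}-\chi_{[\alpha,\beta]})=\pm 1/\delta$. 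Each $M^{\pm}$ is entire of exponential type $2\pi\delta$, so $\widehat{M^{\pm}}$ is supported in $[-\delta,\delta]$.

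The next step is to periodize by setting $S_{K,I}^{\pm}(x)=\sum_{n\in\ZZ}M^{\pm}(x+n)$. The decay $B(t)-\operatorname{sgn}(t)=O(1/t^2)$ at infinity, visible from Beurling's formula, makes $\sum_n(M^{\pm}(x+n)-\chi_{[\alpha,\beta]}(x+n))$ converge absolutely, so $S_{K,I}^{\pm}$ is a well-defined bounded function. Poisson summation then gives
\[
S_{K,I}^{\pm}(x)=\sum_{n\in\ZZ}\widehat{M^{\pm}}(n)\,e(nx),
\]
with $\widehat{M^{\pm}}(n)=0$ for $|n|>\delta=K+1$; the boundary coefficients at $|n|=K+1$ vanish because $\widehat B(\pm 1)=0$, a feature evident from Beurling's explicit interpolation formula. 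Hence $S_{K,I}^{\pm}$ is a trigonometric polynomial of degree at most $K$; the majorization/minorization property is preserved by the periodization, and $\int_{0}^{1}S_{K,I}^{\pm}=\int_{\RR}M^{\pm}=(\beta-\alpha)\pm 1/(K+1)$.

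The main obstacle I anticipate is pinning down the sharp constant $4/(|n|+1)$ in the coefficient bound. The Fourier coefficient takes the form $\widehat{M^{\pm}}(n)=\delta^{-1}\widehat B(\pm n/\delta)$ times a phase factor $e(\mp n\alpha)+e(\pm n\beta)$, and crude estimates from compact support together with integration by parts give only an $O(1/|n|)$ bound. Obtaining the clean constant $4$ requires the explicit Beurling interpolation series for $B$, a careful $L^{\infty}$ bound on $\widehat B$ over $[-1,1]$, and factor-of-two bookkeeping from combining the two summands of $M^{\pm}$; these are also the details that force the normalization $\delta=K+1$ (so that the integral error is $1/(K+1)$ rather than $1/K$).
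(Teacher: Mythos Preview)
Your proposal is correct and follows precisely the Beurling--Selberg construction that the paper invokes; the paper itself gives no argument, only a pointer to Chapter~1 of Montgomery's \emph{Ten Lectures} (formulas (16)--(22)), and what you have sketched is exactly the content of that reference. Your caveat about the constant $4/(|n|+1)$ is well placed: that bound does require the explicit formulas for $\widehat B$ on $[-1,1]$ (or equivalently the Vaaler--Selberg coefficient expressions in Montgomery), so at that step you should simply cite those formulas rather than try to derive the constant from scratch.
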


\begin{proof}
For details and explicit construction
of $S_{K,I}^\pm$, see Chapter 1 in \cite{HLM}, especially formulas (16)--(22).
\end{proof}

\noindent
\textbf{Definition.}
A subset $\cR$ of $\RR^k$ is said to be \emph{$t$-simple} if,
for any $1\le j\le k$ and any choice of $z_i\in \RR$
($i\ne j$), the 1-dimensional projection
 $\{ z_j : (z_1,\ldots,z_k)\in \cR \}$
consists of at most $t$ disjoint intervals.

\begin{prop}\label{fracparts}
Fix $\eps$, $\rho$ such that $0<\rho<\eps$ and let $k\in \NN$ with $\eps \le 1/k^2$.
Suppose that $1\le m\le x^{1/2}$, and $M_1,\ldots,M_k$ are integers such that
\begin{enumerate}
\item[(i)] $M_i \ge x^{\eps}$ for all i;
\item[(ii)] $x/2^k < M_1\cdots M_k m \le 2x$;
\item[(iii)] for all $i$, $M_i \not\in \bigcup_{s\le 1/\eps+1} (x^{(1-\rho)/s},4x^{1/s}]$.
\end{enumerate}
Let $\cR$ be any $t-$simple subset of 
\[
\{ (x_1,\ldots,x_k) : M_i<x_i \le 2M_i \; (1\le i\le k), x<mx_1\cdots x_k \le 2x \}.
\]
 and let $\cQ$ denote the set of all $k$-tuples $\qq=(q_1,\ldots,q_k)$ of primes such that $\qq\in \cR$.
  For each $1\le j\le k$,
let $s_{j}=\fl{\frac{\log x}{\log M_j}}-1$.
 Then, for some $\xi>0$, which depends only on $\eps$,$\rho$ and $k$, we have (writing $n=q_1\cdots q_km$)
\begin{align}
\#\Big\{\qq\in \cQ :  \forall j, q_j^{\ell} \Big| \bn \Big\} &= (1+O(k^2\eps)) \prod_{j=1}^k \bigg(1-2^{-s_j}\sum_{h=0}^{\ell-1} \binom{s_j}{h}  \bigg)  |\cQ| 
+O_{k,\eps} \pfrac{tx^{1-\xi}}{m}, \label{Qell} \\
  \# \Big\{\qq\in \cQ :  \forall j, q_j \nmid \bn \Big\}&= \frac{1+O(k^2\eps)}{2^{s_1+\cdots+s_k}} |\cQ| 
+O_{k,\eps} \pfrac{t x^{1-\xi}}{m}.\label{Q0}
\end{align}
\end{prop}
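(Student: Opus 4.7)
The plan is to use Kummer's criterion to reduce both \eqref{Qell} and \eqref{Q0} to conditions on the joint fractional parts $\{n/q_j^s\}$, to detect these via Selberg's majorant/minorant polynomials from Lemma \ref{Selberg}, and then to bound the resulting exponential sums over primes using Lemma \ref{expsumprimes}.

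Since each $q_j>x^\eps$ is prime and coprime to $m\prod_{i\neq j}q_i$, we have $q_j\|n$, so the base-$q_j$ digits $b_d^{(j)}$ of $n$ satisfy $b_0^{(j)}=0$, and for $d\geq 1$ the condition $b_d^{(j)}\geq q_j/2$ is equivalent to $\{n/q_j^{d+1}\}\geq 1/2$ up to an $O(1/q_j)$ boundary error. The top digit $b_{D_j}^{(j)}$ with $D_j=\lfloor\log n/\log q_j\rfloor$ is handled separately: hypothesis (iii) keeps $M_j$ away from the thresholds $x^{1/s}$ near which the leading-digit indicator would oscillate, so on each subinterval of $(M_j,2M_j]$ the top digit is either uniformly $\geq q_j/2$ or uniformly $<q_j/2$. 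Kummer's criterion thus reduces $q_j^\ell\mid\bn$ to the condition that at least $\ell-\varepsilon_j$ of the fractional parts $\{m\prod_{i\neq j}q_i/q_j^{s-1}\}$ for $s=2,\ldots,s_j+1$ lie in $[\tfrac12,1)$, where $\varepsilon_j\in\{0,1\}$ records the top-digit contribution.

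Approximate each such indicator by Selberg's majorant or minorant of degree $K=\lfloor x^\eta\rfloor$ (for small $\eta=\eta(\eps,\rho,k)>0$) and expand the product over all pairs $(j,s)$; the left sides of \eqref{Qell} and \eqref{Q0} are sandwiched between sums
\[
\sum_{\mathbf{n}\in\ZZ^{L}}c_{\mathbf{n}}\sum_{\qq\in\cQ} e\!\left(\sum_{j=1}^k\sum_{s=2}^{s_j+1} n_{j,s}\cdot\frac{m\prod_{i\neq j}q_i}{q_j^{s-1}}\right),\qquad L=s_1+\cdots+s_k,
\]
with $|n_{j,s}|\leq K$ and $\sum_{\mathbf{n}}|c_{\mathbf{n}}|\ll(\log K)^{L}$ by the coefficient bound in Lemma \ref{Selberg}. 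The zero-frequency term, combined with $\int S_{K,I}^{\pm}=\tfrac12\pm 1/(K+1)$, produces the claimed main terms with relative error $O(k^2\eps)$ after absorbing the $O(1/K)$ Selberg corrections. For $\mathbf{n}\neq\mathbf{0}$, pick an index $k^*$ with $n_{k^*,s}\neq 0$ for some $s\geq 2$ and let $r_1\geq 1$ be minimal such that $\beta_{r_1}:=n_{k^*,r_1+1}\,m\prod_{i\neq k^*}q_i\neq 0$. As a function of $q_{k^*}$, the phase is $\alpha q_{k^*}+\sum_{r\geq r_1}\beta_r/q_{k^*}^r$; the $t$-simple hypothesis lets us sum over $q_{k^*}$ in at most $t$ subintervals of $(M_{k^*},2M_{k^*}]$.

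Because $m\prod_{i\neq k^*}q_i\asymp x/M_{k^*}$, we have $|\beta_{r_1}|/M_{k^*}^{r_1}\asymp x/M_{k^*}^{r_1+1}$. Hypothesis (iii) applied with $s=r_1+1\leq s_{k^*}+1\leq 1/\eps+1$ forces $M_{k^*}^{r_1+1}\leq x^{1-\rho}$, since the alternative $M_{k^*}^{r_1+1}>4x$ is incompatible with $r_1\leq s_{k^*}$. Thus Lemma \ref{expsumprimes} applies with $A=\min(x^{\rho},M_{k^*}^{1/6})$, giving $\sum_{q_{k^*}\in J}e(\cdots)\ll M_{k^*}(\log x)^4 x^{-\xi_0}$ for some $\xi_0=\xi_0(\eps,\rho)>0$. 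Summing over the frozen primes ($\ll x/(mM_{k^*})$ choices) and the $t$ subintervals gives at most $t(x/m)(\log x)^4 x^{-\xi_0}$ per frequency; multiplying by the Selberg mass $(\log K)^{L}=(\log K)^{O(k/\eps)}$ and choosing $\eta$ small enough that this factor is absorbed by $x^{\xi_0}$ yields the claimed $O_{k,\eps}(tx^{1-\xi}/m)$ error.

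The main obstacle is establishing the uniform lower bound $|\beta_{r_1}|/M_{k^*}^{r_1}\geq x^\rho$ for \emph{every} nonzero frequency $\mathbf{n}$: without hypothesis (iii) the ratio $x/M_{k^*}^{r_1+1}$ could be of constant size, leaving $A$ bounded and Lemma \ref{expsumprimes} giving no saving. The forbidden ranges $(x^{(1-\rho)/s},4x^{1/s}]$ are precisely tuned to preclude this for every admissible $s\leq 1/\eps+1$. A secondary technical difficulty is handling the top-digit contribution and the $O(1/q_j)$ ambiguity at $\{n/q_j^s\}=1/2$; both are controlled by the same hypothesis (iii), the former by keeping $M_j$ away from $x^{1/s}$ and the latter by applying Selberg's construction to a thin neighborhood of $1/2$ and absorbing the excess into the same $O(tx^{1-\xi}/m)$ error.
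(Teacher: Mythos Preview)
Your overall architecture matches the paper's proof: Kummer reduces to fractional parts $\{n/q_j^{s+1}\}$, Selberg's polynomials linearize the indicator, and Lemma \ref{expsumprimes} handles the nonzero frequencies, with hypothesis (iii) supplying the crucial lower bound $A\gg x^{\rho}$. The exponential-sum portion of your sketch is essentially correct (and your choice $K=\lfloor x^{\eta}\rfloor$ works, though the paper simply takes $K=\lfloor k\eps^{-2}\rfloor$, a constant, which avoids having to absorb the $(\log K)^{L}$ factor).

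The genuine gap is your treatment of the leading base-$q_j$ digit. You introduce $\varepsilon_j\in\{0,1\}$ and assert that on subintervals of $(M_j,2M_j]$ the top digit is ``uniformly $\ge q_j/2$ or uniformly $<q_j/2$''. This is both imprecise and insufficient. Imprecise, because the leading digit $\lfloor n/q_j^{s_j+1}\rfloor$ depends on \emph{all} of $m,q_1,\ldots,q_k$, not just $q_j$, so it is not locally constant in $q_j$ alone. Insufficient, because if $\varepsilon_j=1$ on any nonnegligible piece of $\cQ$, the main term you would obtain there is $\prod_j\big(1-2^{-s_j}\sum_{h<\ell-\varepsilon_j}\binom{s_j}{h}\big)$, which does \emph{not} match \eqref{Qell}. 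What is actually needed (and what the paper proves) is that $\varepsilon_j=0$ always: applying (iii) with $s=s_j+2$ and using $M_j>x^{1/(s_j+2)}$ forces $M_j\ge 4x^{1/(s_j+2)}$, whence $q_j^{s_j+2}>4^{s_j+2}x\ge 32n$ and the leading digit is $<q_j/32$. The companion bound $M_j\le x^{(1-\rho)/(s_j+1)}$ (from (iii) with $s=s_j+1$) is equally essential, since it pins down that $n$ has \emph{exactly} $s_j+2$ base-$q_j$ digits; you use this implicitly when you index $s=2,\ldots,s_j+1$ but never verify it.

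Finally, the ``$O(1/q_j)$ ambiguity at $\{n/q_j^s\}=1/2$'' you worry about does not exist: $q_j$ is an odd prime, so $\{n/q_j^s\}$ has odd denominator and cannot equal $1/2$.
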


\begin{proof}
First, we make some preliminary observations concerning
the quantities $M_j$ and $q_j$.  
Let $1\le j\le k$.
By (ii) and (iii), $M_j \le x^{1-\rho}$, hence $s_j\ge 0$.
By definition,
\[
x^{\frac{1}{s_j+2}} < M_j \le x^{\frac{1}{s_j+1}}.
\]
However, (i) implies that $s_j\le 1/\eps-1$, and hence using (iii) we in fact have stronger inequalities for $M_j$, namely
\be\label{Mjsj}
4 x^{\frac{1}{s_j+2}} \le M_j \le x^{\frac{1-\rho}{s_j+1}} \qquad (1\le j\le k).
\ee
It will important for our argument below that small powers of the
primes $q_j$ stay away from $x$; the contrary case
when $q_j^b$ is close to $x$ for some small $b$ and some $j$, will be shown
to be very rare in the next section.


If $s_j=0$ for some $j$, then $M_j \ge 4 x^{1/2}$.
But $q_j>M_j$ and $q_j|n$ imply that
$q_j^2 > 8n$ and hence $q_j \nmid \bn$.  Thus, the inequalities
\eqref{Qell} and \eqref{Q0} follow trivially in this case.

Now assume that $s_j\ge 1$ for every $j$.
For each $\qq \in \cQ$, let $n=mq_1\cdots q_k$.
Since $M_j < q_j \le 2M_j$, \eqref{Mjsj} implies
that $n$ has exactly $s_j+2$ digits in base-$q_j$.
Moreover, the leading digit is much smaller than $q_j/2$
since by \eqref{Mjsj},
\[
\frac{n}{q_j^{s_j+2}} < \frac{2x}{M_j^{s_j+2}} \le \frac{2}{4^{s_j+2}} \le \frac{1}{32}.
\]
Hence there are $s_j$ base-$q_j$ digits which 
could possibly induce a carry when adding $n$ and $n$ in
base-$q_j$. 
Therefore, $\bn$ is divisible by $q_j^\ell$ if and only if 
for at least $\ell$ values of $s\in \{1,2,\ldots,s_j\}$
 we have $\{n/q_j^{s+1}\}>1/2$.  Likewise,
 $q_j \nmid \bn$ if and only if $\{n/q_j^{s+1}\}<1/2$
 for every $s$ in the range  $1\le s\le  s_j$.  

Now we return to the proof of  the Proposition.
The number of $\qq$ such that $q_i|m$ for some $i$ is
\[
\ll (k\log x) x^{1-\eps}/m,
\]
which is negligible and can be absorbed into the error 
terms in \eqref{Qell} and \eqref{Q0} if $\xi<\eps$.
For each $1\le j\le k$ and $1\le s\le s_j$, let $\sigma_{j,s} \in \{0,1\}$, and denote by $\Sigma$ the vector of the numbers
$\sigma_{j,s}$. 
For each $\Sigma$ let
\[
\cQ_\Sigma := \Bigg\{ \qq\in \cQ :  \Big\{ \frac{mq_1\cdots q_k}{q_j^{s+1}} \Big\} \in \Big[ \frac{\sigma_{j,s}}{2}, \frac{1+\sigma_{j,s}}{2} \Big) \; (1\le j\le k, 1\le s\le s_j)  \Bigg\}.
\]
Our main task is to prove that
\be\label{Qsigma}
|Q_\Sigma| = \frac{1+O(k^2\eps)}{2^{s_1+\cdots+s_k}} |\cQ| +
O_{k,\eps}\pfrac{tx^{1-\xi}}{m}.
\ee

By our earlier remarks, the left side of \eqref{Qell} is the sum of $\cQ_{\Sigma}$ over all $\Sigma$ such that $\sum_s \sigma_{j,s} \ge \ell$ for all $j$,
and the left side of \eqref{Q0} equals $\cQ_\Sigma$ for the single $\Sigma$
with $\sigma_{j,s}=0$ for all $j,s$.  Thus,
\eqref{Qell} and \eqref{Q0} follow from \eqref{Qsigma}.
 
In order to prove \eqref{Qsigma}, fix 
 $\Sigma$ and
apply Lemma \ref{Selberg} to the intervals
$[0,1/2]$ and $[1/2,1]$ and with 
\[
K = \fl{k\eps^{-2}}.
\]
Define 
\dalign{
\psi^{\pm}_{0,K}(x)=S_{K,[0,1/2]}^\pm(x) &= \sum_{|n|\le K} c_{0,n}^{\pm} e(nx), \\
\psi^{\pm}_{1,K}(x)=S_{K,[1/2,1]}^\pm(x) &= \sum_{|n|\le K} c_{1,n}^{\pm} e(nx).
}
 Then
\be\label{smoothapprox}
\sum_{\qq \in \cQ} \prod_{j=1}^k \prod_{s=1}^{s_j} \psi^-_{\sigma_{j,s},K}(mq_1\cdots q_k/q_j^{s+1}) \le |\cQ_\Sigma| \le 
\sum_{\qq \in \cQ} \prod_{j=1}^k \prod_{s=1}^{s_j} \psi^+_{\sigma_{j,s},K}(mq_1\cdots q_k/q_j^{s+1}).
\ee
Denote by $\blam$ an integral vector $(\lam_{j,s}:1\le j\le k, 1\le s\le s_j)$, 
where each component is bounded by $K$ in absolute value.
Focusing on the lower bound (the upper bound analysis is identical), we then have
\be\label{Dlower}
|\cQ_\Sigma| \ge \sum_{\qq \in \cQ} \ssum{\blam} \bigg( \prod_{j,s} c_{\sigma_{j,s},\lam_{j,s}}^{-} \bigg) e\bigg(
m \sum_{j,s} \lam_{j,s} \frac{q_1\cdots q_k}{q_j^{s+1}} \bigg).
\ee
Using Lemma \ref{Selberg}, we find that the main term ($\lam_{j,s}=0$ for every $j,s$) equals
\[
|\cQ| \prod_{j,s} \bigg( \int_0^1 \psi^{-}_{\sigma_{j,s},K}(u)\, du \bigg)^{} = \frac{|\cQ|}{2^{s_1+\cdots+s_k}} (1+O(1/K))^{s_1+\cdots+s_k} = \frac{1+O(k^2 \eps)}{2^{s_1+\cdots+s_k}} |\cQ|.
\]
Now $s_1+\cdots +s_k \ll k/\eps$ and recall that $\eps<1/k^2$.
 By Lemma \ref{Selberg}, $\sum_n |c_{\sigma,n}^\pm|\ll \log K$
and therefore we have
\be\label{CD1}
|\cQ_\Sigma| \ge (1+O(k^2\eps)) \frac{|\cQ|}{2^{s_1+\cdots+s_k}}+E,
\ee
where
\[
E \ll (O(\log K))^{O(k/\eps)} \max_{\blam\ne \mathbf{0}} \Bigg|
  \sum_{\qq\in \cQ} e\bigg(
m \sum_{j,s} \lam_{j,s} \frac{q_1\cdots q_k}{q_j^{s+1}} \bigg) \Bigg|.
\]
 Fixing $\blam\ne \mathbf{0}$, 
 let $h = \min\{j\le k: \lam_{j,s}\ne 0 \text{ for some }s\}$
and define $r = \min\{ s: \lam_{h,s}\ne 0\}$.
Fixing $q_i \; (i\ne h)$, the $t$-simplicity of $\cR$ implies that
the variable $q_h$ ranges over primes 
in at most $t$ subintervals $I$ (possibly $t=0$) of $(M_h,2M_h]$.
We have
\[
\sum_{j,s} \lam_{j,s} \frac{q_1\cdots q_km}{q_j^{s+1}} = 
 \a q_h 
+ \sum_{s=r}^{s_h} \lam_{h,s}\frac{P}{q_h^{s}} =: f(q_h).
\]
for some real number $\a$ (depending on $m$ and the $q_i$ for $i\ne h$)
and $P=(q_1\cdots q_k m )/q_h$.  By (ii) and (iii),
\be\label{Plwr}
P \ge \frac{M_1\cdots M_k m}{M_h} \ge \frac{x}{2^k M_h} \ge 
x^{\rho} 2^{-k} M_h^{s_h}.
\ee
We also have  $|\lam_{h,s}|\le K \ll M_h^{1/10}$ for large $x$.
 Therefore, for each interval $I$ we may apply
Lemma \ref{expsumprimes} with
\[
N=M_h, \quad r_1=r, \quad \b_{r_1} = P\lam_{h,r}, \qquad A =  2^{-k}x^{\rho}.
\]
The condition $|\beta_{r_1}| \ge N^{r_1}A$ follows from \eqref{Plwr},
and the lower bound $M_h\ge x^{\eps}$ implies that $A\le M_h$, so that \eqref{betar} holds.
We also have that 
\[
j \le 3 + \frac{\log (KP)}{\log M_h} \le 3+\frac{\log x}{\log M_h} \le 3+1/\eps.
\]
Therefore, applying Lemma \ref{expsumprimes}, we get
\[
\sum_{q_h\in I} e(f(q_h)) \ll_k M_h(\log M_h)^4 \( M_h^{-\frac{1}{3\cdot 2^j}}+ x^{-\rho/4}\) \ll x^{-\xi} M_h.
\]
Summing over all $q_i \; (i\ne h)$, we find that
$E \ll_{k,\eps} t x^{1-\xi}.$   Combined with \eqref{CD1}, this completes
the proof of \eqref{Qsigma}.
\end{proof}

%
%
%
{\Large \section{Proof of Theorem \ref{main_theorem}}\label{sec:thm1}}
%

Throughout this section, we will assume that $k$
is a large integer, and that $\eps,\delta$ are
functions of $k$ that tend to 0 as $k\to \infty$; precisely, we take
\be\label{delepsrho}
\delta= e^{-2k/3}, \qquad \eps=k^{-2k}.
\ee
Suppose that $x$ is a large integer.  We think of $k$ being fixed and $x\to \infty$.
In this section only, we adopt the following notation for functions $f(k,x)$.
The notation $f(k,x)=o(g(k,x))$ means that 
\[
\forall k\ge 1 : \, \lim_{x\to \infty} \frac{f(k,x)}{g(k,x)} = 0.
\]
\newcommand{\ob}{\overline{o}}
The notation  $f(k,x)=\ob(g(k,x))$ means that
\[
\lim_{k\to \infty} \limsup_{x\to\infty} \frac{f(x,k)}{g(x,k)}=0.
\]
For example, $1/k = \ob(1)$ and $e^k x^{1-1/k} = o(x)$.

\subsection{Sampling large prime factors}\label{sec:model}
Take a large integer $x$, and select a random integer $n\in (x,2x]$
with uniform probability.   Following Donnelly and Grimmett \cite{DG}, we select at random a $k$-tuple 
$\qq(n)=(q_1,\ldots,q_k)$ of prime power divisors of $n$ at random, in a size-biased fashion, together
with random variables $X_1(n),\ldots,X_k(n)$.
If $n$ has fewer than $k$ distinct prime factors, set
$\qq(n)=(1,\ldots,1)$ and $X_1(n)=\cdots=X_k(n)=0$.  Otherwise,
choose $q_1|n$ at random with probability $\frac{\Lambda(q_1)}{\log n}$, where $\Lambda$ is the von Mangoldt function.  For
$2\le i\le k$, once $q_1,\ldots,q_{i-1}$ are chosen, select
$q_i|(n/q_1\cdots q_{i-1})$ with probability $\frac{\Lambda(q_i)}{\log (n/q_1\cdots q_{i-1})}$.
Then set 
\[
X_i(n) = \frac{\log q_i}{\log (n/q_1\cdots q_{i-1})} \qquad (1\le i\le k)
\]
We observe the relation
\be\label{qiX}
q_i = n^{(1-X_1(n))\cdots (1-X_{i-1}(n))X_i(n)} \qquad (1\le i\le k).
\ee

The following is essentially Theorem 1 of \cite{DG}, although we
have stated the result with a slight modification.  For completeness, a proof is given in the Appendix.

\medskip

\begin{lem}\label{DG}
Fix $k\in \NN$. 
As $x\to \infty$, the random vector $(X_1(n),\ldots,X_k(n))$ 
converges weakly to the uniform distribution (that is, Lebesgue measure) on $[0,1]^k$.
\end{lem}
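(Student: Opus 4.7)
My plan is to prove weak convergence by showing that $\E[f(X_1(n),\ldots,X_k(n))]\to\int_{[0,1]^k}f$ for every continuous $f\colon [0,1]^k\to\RR$; since Lebesgue measure on $[0,1]^k$ has a continuous density, this is equivalent to weak convergence. Unfolding the definition of the $X_i$,
\[
\E[f(\mathbf X(n))] = \frac{1}{x}\sum_{x<n\le 2x}\ssum{q_1,\ldots,q_k\text{ prime powers}\\ q_1\cdots q_k\mid n}\prod_{i=1}^k \frac{\Lambda(q_i)}{\log(n/(q_1\cdots q_{i-1}))}\,f(\mathbf X(n));
\]
the set of $n$ with fewer than $k$ distinct prime factors (where $\mathbf X(n)=\mathbf 0$ by convention) has density $O((\log\log x)^{k-1}/\log x)$ and contributes $o(1)$, so may be discarded.

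Next, I would swap the order of summation: set $Q=q_1\cdots q_k$ and $m=n/Q$, so $m$ ranges over the $x/Q+O(1)$ integers in $(x/Q,2x/Q]$. Fix a small $\eta>0$ and restrict to the range where $x^\eta\le q_i$ for every $i$ and $Q\le x^{1-\eta}$; the complementary contribution is $o_\eta(1)$ uniformly in $x$ by a routine count of integers whose large prime factors lie in specified logarithmic ranges. On the restricted range, setting $t_i=\log q_i/\log x$, we have $\log(n/(q_1\cdots q_{i-1}))=(1-t_1-\cdots-t_{i-1})\log x+O(1)$ and $X_i=t_i/(1-t_1-\cdots-t_{i-1})+O(1/\log x)$, so the $X_i$ and logarithmic factors are essentially constant as $m$ varies and may be pulled out of the $m$-sum.

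After dividing by $x$, the main term becomes
\[
\sum_{\mathbf q}\prod_{i=1}^k\frac{\Lambda(q_i)/q_i}{(1-t_1-\cdots-t_{i-1})\log x}\,f(\widetilde{\mathbf X})+o(1),\qquad \widetilde X_i=\frac{t_i}{1-t_1-\cdots-t_{i-1}}.
\]
Applying the Mertens-type identity $\sum_{x^a<q\le x^b}\Lambda(q)/q=(b-a)\log x+O(1)$ converts each sum over $q_i$ into a Riemann integral in $t_i$, and the expression tends to
\[
\int\limits_{\substack{t_i\ge\eta\\ \sum t_i\le 1-\eta}}\prod_{i=1}^k\frac{f(\widetilde{\mathbf X})}{1-t_1-\cdots-t_{i-1}}\,dt_1\cdots dt_k.
\]
Finally, the triangular substitution $t_i=(1-X_1)\cdots(1-X_{i-1})X_i$ (which inverts to $X_i=\widetilde X_i$) yields $1-t_1-\cdots-t_{i-1}=\prod_{j<i}(1-X_j)$ and Jacobian $|\det J|=\prod_{i=1}^k\prod_{j<i}(1-X_j)$, so the Jacobian cancels the denominator exactly and the integrand becomes $f(\mathbf X)$. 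For each $\mathbf X\in(0,1)^k$ the constraint set is satisfied for all sufficiently small $\eta$, so the region fills $[0,1]^k$ as $\eta\to 0$, and letting $x\to\infty$ and then $\eta\to 0$ gives $\E[f(\mathbf X(n))]\to\int_{[0,1]^k}f$, as required.

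The main obstacle is the uniform estimate for the truncated ``boundary'' region: where some $q_i<x^\eta$ or $Q>x^{1-\eta}$, so that a denominator $\log(n/(q_1\cdots q_{i-1}))$ becomes small. The core change-of-variables calculation is transparent once the truncation is justified; the work lies in showing that the excluded contribution is $o_\eta(1)$ uniformly in $x$. This reduces to a standard estimate on the density of integers with a prescribed logarithmic profile of their large prime factors, closely related in spirit to the bounds of Section~\ref{sec:smallprimes}.
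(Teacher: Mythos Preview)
Your approach is essentially the same as the paper's: both reduce to Mertens' theorem after swapping the order of summation, and both use the same triangular change of variables (implicit in the paper, explicit in your sketch). The paper makes one simplification worth noting: instead of testing against arbitrary continuous $f$ and then truncating with a parameter $\eta$, it tests only against indicator functions of boxes $\prod_i[a_i,b_i]$ with $0<a_i<b_i<1$, which suffices for weak convergence and automatically forces each $q_i>n^c$ for a fixed $c>0$ --- so the boundary estimate you flag as the ``main obstacle'' simply never arises.
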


We denote $\PR_x$, $\E_x$ for the probability, respectively expectation, with respect to
these random $n$, $\qq(n)$ and $(X_1(n),\ldots,X_k(n))$,
and use $\PR$ and $\E$ for the uniform probability measure on $[0,1]^k$.
For the latter, we work with  independent, uniform-$[0,1]$ random variables $U_1,\ldots,U_k$.

\medskip

\noindent
\textbf{Definition.}
With $x$ fixed, let $\cY_k(x)$ denote the set of $k$-tuples $\mathbf{y}=(y_1,\ldots,y_k)\in [1,x]^k$ such that
\begin{enumerate}
\item[(a)] $y_i \ge  x^{\eps}$ for all $i$;
\item[(b)] $x^{1-\del} \le y_1\cdots y_k \le x^{1-\del^2}$;
\item[(c)] for all $i$ and all $1\le s\le 1/\eps+1$,
$y_i \not\in [x^{(1-\eps^2)/s},8x^{1/s}]$.
\end{enumerate}

\begin{lem}\label{Yksimple}
The set $\cY_k(x)$ is $(1/\eps+2)$-simple.
\end{lem}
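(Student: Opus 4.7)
The plan is to fix an index $j \in \{1,\ldots,k\}$ and fix arbitrary values $y_i \in \RR$ for $i \ne j$, and then to count explicitly the number of intervals making up the set $S := \{y_j \in \RR : (y_1,\ldots,y_k) \in \cY_k(x)\}$. The idea is that conditions (a) and (b) together carve out a single interval in the variable $y_j$, after which condition (c) removes at most $\fl{1/\eps}+1$ forbidden intervals; elementary bookkeeping then yields the bound $1/\eps+2$.

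More precisely, I would begin by noting that the constraints on $y_i$ for $i \ne j$ in (a) and (c) do not involve $y_j$ at all, so they either hold (in which case $S$ is governed by the remaining constraints on $y_j$) or fail (in which case $S = \emptyset$). Similarly, the $j$-th instance of (a) requires $y_j \ge x^{\eps}$. Setting $P := \prod_{i \ne j} y_i$, the single condition (b) becomes the two-sided inequality $x^{1-\del}/P \le y_j \le x^{1-\del^2}/P$. Thus the joint constraints from (a) and (b) define a single (possibly empty) interval $I_0 \subset \RR$.

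Next, the $j$-th coordinate portion of condition (c) demands that $y_j \notin J_s$ for each integer $s$ with $1 \le s \le 1/\eps+1$, where $J_s := [x^{(1-\eps^2)/s},\, 8x^{1/s}]$. The number of such indices $s$ is at most $\fl{1/\eps}+1$. Therefore
\[
S \;=\; I_0 \setminus \bigcup_{s=1}^{\fl{1/\eps}+1} J_s .
\]
An immediate induction shows that removing one interval from a union of $m$ pairwise disjoint intervals produces at most $m+1$ disjoint intervals. Starting from the single interval $I_0$ and successively deleting the $\fl{1/\eps}+1$ forbidden intervals $J_s$, one obtains at most $1+(\fl{1/\eps}+1) \le 1/\eps+2$ disjoint intervals, as required.

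This is essentially a bookkeeping argument and I do not anticipate any real obstacle; the only point requiring a moment of care is observing that the constraints on the other coordinates can be separated out cleanly so that exactly one interval $I_0$ remains before the exclusions in (c) are applied.
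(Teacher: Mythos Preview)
Your argument is correct and is essentially identical to the paper's proof: fix $j$ and the other coordinates, observe that (a) and (b) together confine $y_j$ to a single interval, and then remove at most $\fl{1/\eps}+1$ intervals coming from (c). The only difference is that you spell out the elementary induction for the count, whereas the paper simply asserts the final bound.
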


\begin{proof}
Fix $j$ and let $y_i$ be arbitrary for $i\ne j$.
Items (a) and (b) force $y_j$ into a single interval, from which are cut at most $1/\eps+1$ intervals by (c).
\end{proof}

\begin{lem}\label{R}
We have $\PR_x(\qq(n) \not\in \cY_k(x) \text{ or some } q_i \text{ not prime}) = \ob(1)$.
\end{lem}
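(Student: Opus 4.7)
The plan is to separately bound the probability that some $q_i$ is a proper prime power and the probability that $\qq(n)\not\in\cY_k(x)$, showing that the first is $o(1)$ as $x\to\infty$ for each fixed $k$ and the second converges to a limit that tends to $0$ as $k\to\infty$. Combined, this yields $\ob(1)$ in the paper's notation.

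For the prime-power event, I use the size-biased sampling directly: given $n,q_1,\ldots,q_{i-1}$, the conditional probability that $q_i$ is a proper prime power equals $L_i/\log(n/q_1\cdots q_{i-1})$, where $L_i:=\sum_{p^e\|\,n/q_1\cdots q_{i-1},\,e\ge 2}\log p\le \tfrac12\log(n/\mathrm{rad}(n))$. Since $\E\log(n/\mathrm{rad}(n))=O(1)$ (each prime $p$ contributes $\log p/(p(p-1))$, which is summable), and since by Lemma \ref{DG} the denominator $(1-X_1)\cdots(1-X_{i-1})\log n$ exceeds $\sqrt{\log x}$ outside an event of probability $o_x(1)$, each step contributes at most $O(1/\sqrt{\log x})+o_x(1)$; summing over $i\le k$ gives $o(1)$.

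For the main event, I translate conditions (a)--(c) into statements about $(X_1,\ldots,X_k)$ via \eqref{qiX}: writing $Z_i:=\log q_i/\log x=(1-X_1)\cdots(1-X_{i-1})X_i\cdot(1+O(1/\log x))$, each defining condition reduces to an inequality on the $Z_i$'s up to an $o_x(1)$ buffer. By Lemma \ref{DG} and the Portmanteau theorem (all relevant boundaries have Lebesgue measure zero),
\[
\limsup_{x\to\infty}\PR_x(\qq(n)\not\in\cY_k(x))\le \PR(E'_a)+\PR(E'_b)+\PR(E'_c),
\]
where the $E'_j$ are the limiting failure events for (a), (b), (c) formulated with $T_i:=(1-U_1)\cdots(1-U_{i-1})U_i$ in place of $Z_i$, and $U_j$ i.i.d.\ uniform-$[0,1]$. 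Using that $T_i$ has density $f_i(t)=(\log(1/t))^{i-1}/(i-1)!$ on $(0,1]$: $\PR(E'_a)\le \sum_{i\le k}\PR(T_i<\eps)\ll k\eps(\log(1/\eps))^{k-1}/(k-1)!$, which tends to $0$ very fast for $\eps=k^{-2k}$; $E'_b$ amounts to $|S_k-k|>k/3$ for $S_k=-\sum_{j=1}^k\log(1-U_j)\sim\mathrm{Gamma}(k,1)$, so a standard Chernoff bound gives $\PR(E'_b)\ll e^{-k/18}$; and
\[
\PR(E'_c)\le \sum_{i\le k}\sum_{s\le 1/\eps+1}\frac{\eps^2}{s}\cdot\frac{(\log s)^{i-1}}{(i-1)!}\ll \eps,
\]
after swapping the order of summation and using $\sum_{i=0}^\infty (\log s)^i/i!=s$. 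All three are $o_k(1)$.

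The main technical obstacle is the prime-power bound: a naive expected-divisor argument only yields $O(1)$ for the expected number of proper-prime-power samples, and the decay to $o(1)$ requires the additional input, via Lemma \ref{DG}, that the sampling denominator $\log(n/q_1\cdots q_{i-1})$ stays $\gg\sqrt{\log x}$ with probability $1-o_x(1)$. The remaining ingredients are routine density estimates for the stick-breaking variables and weak convergence transfer from $X_i$ to $U_i$.
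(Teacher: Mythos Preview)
Your argument is correct in substance, with one small slip in the prime-power step: your $L_i$ is not the right numerator. Given $n,q_1,\ldots,q_{i-1}$ and $m=n/q_1\cdots q_{i-1}$, the conditional probability that $q_i$ is a proper prime power is
\[
\frac{1}{\log m}\sum_{\substack{p^e\mid m\\e\ge 2}}\Lambda(p^e)=\frac{\log(m/\mathrm{rad}(m))}{\log m},
\]
not $\sum_{p^e\|m,\,e\ge 2}\log p/\log m$ (your sum undercounts whenever $v_p(m)\ge 3$), and the factor $\tfrac12$ in your bound is not valid in general. The fix is immediate: replace $L_i$ by $\log(m/\mathrm{rad}(m))\le\log(n/\mathrm{rad}(n))$ and the rest of your argument goes through unchanged.

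The paper follows the same overall scheme---transfer to the $U_i$ via Lemma~\ref{DG} and bound the failure of (a), (b), (c) separately---but handles two pieces more arithmetically than you do. For (c), instead of integrating the density of $T_i$ over the forbidden intervals, the paper simply bounds the probability that $n$ has \emph{any} prime-power factor in $\bigcup_s(x^{(1-\eps^2)/s},8x^{1/s}]$ by $\sum_s\sum_{q\in I_s}1/q\ll\eps$ via Mertens. For the prime-power event, the paper avoids your $\E\log(n/\mathrm{rad}(n))$ computation entirely: once (a) is in hand, any non-prime $q_i\ge x^\eps$ forces $n$ to have a prime-power divisor $p^a>x^\eps$ with $a\ge 2$, and there are only $O(x^{1-\eps/2})$ such $n\in(x,2x]$. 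This is shorter and sidesteps the need to control the sampling denominator $\log(n/q_1\cdots q_{i-1})$. Your route is more uniformly probabilistic and does not rely on having already established (a); either is fine.
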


\begin{proof}
First, note that $\PR_x(n \text{ has fewer than }k\text{ prime factors})=o(1)$.  Now assume that $n$ has at least $k$ distinct
prime factors.  Write $q_i=q_i(n)$ for brevity.
 By \eqref{qiX} and Lemma \ref{DG},
\dalign{
\PR_x (\text{some } q_i<x^{\eps})
 &\le \PR_x (\text{some } q_i \le n^{\eps}) \\
 &\le \PR \big( (1-U_1)\cdots(1-U_{i-1})U_i \le \eps \text{ for some }i \big)+o(1) \\
&\le \PR \big( \, U_i \not\in [\eps^{1/k},1-\eps^{1/k}]\text{ for some }i  \big) +o(1) \\
&\le 2k\eps^{1/k} + o(1) = \ob(1),
}
upon recalling \eqref{delepsrho}. 

From \eqref{qiX}, we have 
\[
q_1 \cdots q_k = n^{1-(1-X_1(n))\cdots (1-X_k(n))}.
\]
Hence,
\dalign{
\PR_x \big( x^{1-\delta} \le q_1\cdots q_k \le x^{1-\delta^2} \big) &=
\PR_x \Bigg( \frac{\log n}{\log x} \big(1-(1-X_1(n))\cdots (1-X_k(n))\big) \in [1-\del, 1-\del^2] \Big).
}
By Lemma \ref{DG},  as $k\to \infty$, the variable
$1-(1-X_1(n))\cdots (1-X_k(n))$ converges in distribution to
$1-(1-U_1)\cdots (1-U_k)$.
Now $\E \log(1-U_i) = -1$ for each $i$, and it follows from the Law of Large Numbers that 
\be\label{LLN}
\PR \big( (1-U_1)\cdots (1-U_k) \in [e^{-1.1k},e^{-0.9k}] \big)
=1-\ob(1).
\ee
Recalling the definition of $\delta$ from \eqref{delepsrho},
we conclude that
\[
\PR_x \big(q_1\cdots q_k\not \in [x^{1-\del},x^{1-\del^2}]\big) = \ob(1).
\]

The probability that (c) fails is at most the probability that $n$ has a prime power factor in one
 of the intervals $[x^{(1-\eps^2)/s},8x^{1/s}]$,
which is easily bounded by Mertens' theorem by
\[
 \sum_{s\le 1/\eps+1} \;\; \sum_{x^{(1-\eps^2)/s} < q \le 8x^{1/s}} \frac{1}{q} \ll  \frac{\eps^2}{\eps} =\eps  = \ob(1).
\]

Finally, if every $q_i \ge x^{\eps}$ and some $q_i$ is not prime, then 
$n$ is divisible by a prime power $p^a>x^\eps$ with $a\ge 2$.
The number of such $n\in (x,2x]$ is $O(x^{1-\eps/2})$.
This completes the proof.
\end{proof}

\subsection{Completing the proof.}
From now on, the variables $q_i$ will denote primes.
Let $n$ and $\qq(n)$ be the random quantities described above.   Our main task is to show that
\be\label{goal}
\PR_x \( n^\ell \Big| \bn \) = c_\ell + \ob(1).
\ee
Theorem \ref{main_theorem} follows immediately upon fixing $k$,
letting $x\to \infty$, and then letting $k\to \infty$.

We first show, using Proposition \ref{smallprfact} and Lemma \ref{R}
that it suffice to consider large prime factors of $n$ and
$\qq(n) \in \cY_k(x)$. 
Let
\[
B_n = \sprod{p^a \| n \\ p>y} p^a,
\]
where $y$ is the smallest power of two that is $>x^{2\delta}$.
Applying Proposition \ref{smallprfact}, followed by an application
of Lemma \ref{R}, we see that
\be\label{Px1}
\PR_x \( n^\ell \Big| \bn \) = \ob(1) + \PR_x \( B_n^\ell \Big| \bn \) = \ob(1) + \PR_x \(   B_n^\ell \Big| \bn \text{ and } \qq(n)\in \cY_k(x)  \).
\ee
If $\qq(n) \in \cY_k(x)$, then
 by (b), $q_1\cdots q_k \ge x^{1-\del}$.
It follows that $B_n | q_1 \cdots q_k$, that is, $q_1\cdots q_k$
contains all of the large prime factors of $n$. 
On the other hand, Proposition \ref{smallprfact} implies that
the probability that some prime factor $q<y$ of $n$ satisfies $q^\ell \nmid \bn$
is $\ob(1).$ Thus
\[
\PR_x  \( B_n^\ell \Big| \bn \text{ and } \qq(n)\in \cY_k(x)  \)  = \PR_x \( \qq(n) \in \cY_k(x) \wedge q_j^\ell \Big| \bn \; (1\le j\le k) \)+\ob(1).
\]

Combined with \eqref{Px1}, this gives
\be\label{Px2}
\PR_x \( n^\ell \Big| \bn \) = \ob(1) + \sum_{\qq\in \cY_k(x)}
\PR_x \( \qq(n)=\qq  \wedge q_j^\ell \Big| \bn \; (1\le j\le k) \).
\ee
Write $n=mq_1\cdots q_k$.
Direct computation gives
\dalign{
\PR_x \( \qq(n)=\qq  \wedge q_j^\ell \Big| \bn \; (1\le j\le k) \)
&= \frac{1}{x}
\ssum{ x < mq_1\cdots q_k \le 2x\\ q_j^\ell | \bn \, (1\le j\le k)} \frac{(\log q_1)\cdots (\log q_k)}{\log n \log (n/q_1) \cdots \log n/(q_1\cdots q_{k-1})}.
}
It is convenient to place each $q_i$ into
a dyadic interval.  For each $i$, let 
$M_i$ be the unique power of two such that $M_i < q_i \le 2M_i$.
By conditions (b) and (c) in the definition of $\cY_k(x)$,
\be\label{qM}
\frac{(\log q_1)\cdots (\log q_k)}{\log n \log (n/q_1) \cdots \log n/(q_1\cdots q_{k-1})}
= (1+o(1)) \frac{(\log M_1)\cdots (\log M_k)}{\log x \log (\frac{x}{M_1}) \cdots \log (\frac{x}{M_1\cdots M_{k-1}})} .
\ee
We insert this last estimate into \eqref{Px2}, obtaining
\begin{align}
\PR_x \( n^\ell \Big| \bn \) &= \ob(1) + (1+o(1)) \sum_{\MM} \frac{(\log M_1)\cdots (\log M_k)}{\log x \log \pfrac{x}{M_1} \cdots \log \pfrac{x}{M_1\cdots M_{k-1}}} \notag 
\\
&\qquad \times \sum_{\frac{x}{2^k M_1 \cdots M_k} < m \le \frac{2x}{M_1 \cdots M_k}} \;\;
\ssum{ \qq\in \cR(\MM,n) \\ q_j^\ell | \bn \, (1\le j\le k)} 1, \label{Px3}
\end{align}
where the sum is taken over $\MM = (M_1,\dots,M_k)$ with each $M_i$ a power of two, and
we have written $n=q_1\cdots q_k m$ and
\[
\cR(\MM,m) = \{ (z_1,\ldots,z_k)\in \cY_k(x) : 
M_i <z_i \le 2M_i \; (1\le i\le k), x < mz_1\cdots z_k \le 2x \}.
\]
Now fix $\MM$ and $m$.
By Lemma \ref{Yksimple}, $\cY_k(x)$ is $(1/\eps+2)$-simple
and thus $\cR(\MM,m)$ is also $(1/\eps+2)$-simple.
 We may then apply Proposition \ref{fracparts} to $\cR(\MM,m)$.
Condition (iii) in that Proposition holds with $\rho=\eps^2$
on account of (c). Indeed, if 
$$M_i\in\left(x^{(1-\rho)/s},4x^{1/s}\right),$$
then
$$q_i\in\left(x^{(1-\rho)/s},8x^{1/s}\right),$$
and (c) does not hold.
Let $s_j=\lfloor \frac{\log x}{\log M_j}\rfloor-1$ for each $j$, 
and define
\[
F(b)= 1- 2^{-b}\sum_{h=0}^{\ell-1} \binom{b}{h} ,
\]
 By Proposition \ref{fracparts}, we get that
 \[
\ssum{ \qq\in \cR(\MM,m) \\ q_j^\ell | \bn \, (1\le j\le k)} 1
  = 
(1+O(k^2 \eps)) \prod_{j=1}^k F(s_j)  
\ssum{ \qq\in \cR(\MM,m)} 1 +O_{k,\eps}(x^{1-\xi}),
 \]
 for some $\xi>0$.  The final error term is negligible since
 the number of $\MM$ is $\ll_k (\log x)^k$. 
Now sum over all $m$ and $\MM$, and rewrite the final
result in terms of $\qq$ using \eqref{qM} again.
   By \eqref{Px3} and $O(k^2 \eps)=\ob(1)$
 we conclude that
 \begin{align}
\PR_x \( n^\ell \Big| \bn \) &= \ob(1) + (1+\ob(1)) \sum_{\qq\in \cY_k(x)} \PR_x( \qq(n) = \qq) \prod_{j=1}^k F(s_j) \notag \\
&=  \ob(1) + (1+\ob(1))\E_x
\mathbf{1}_{\qq(n) \in \cY_k(x)} \prod_{j=1}^k F(s_j),
 \label{Px4}
\end{align}
where (consistent with the earlier definition) by (c) we have for large enough $x$
\be\label{sjM}
s_j = \fl{\frac{\log x}{\log q_j}}-1  \qquad (1\le j\le k, \qq\in \cR(\MM,m)).
\ee
Indeed, clearly,
\[
\fl{\frac{\log x}{\log q_j}}\le \fl{\frac{\log x}{\log M_j}},
\]
and it suffices to show that
\[
\fl{\frac{\log x}{\log q_j}}\ge s_j+1.
\]
We have $M_j\le x^{1/(s_j+1)}$, and next, by (c),
$M_j\le x^{(1-\rho)/(s_j+1)}$. Hence,
$q_j\le 2x^{(1-\rho)/(s_j+1)}\le x^{1/(s_j+1)}$, as required
for \eqref{sjM}.
 
Using Lemma \ref{R} again, followed by Lemma \ref{DG},
we arrive at
\[
\PR\Big( n^\ell \Big| \bn \Big) = \ob(1) + \E_x \prod_{j=1}^k F(s_j) =  \ob(1) + \E \prod_{j=1}^k F(g_j),
\]
where $g_j$ is defined in \eqref{kj}.
Finally, by the Law of Large Numbers, cf. \eqref{LLN} 
we have $g_j \ge e^{j/2}$ for all $j\ge k$ with probability
$1-\ob(1)$ and this completes the proof of \eqref{goal}
upon recalling that
\[
c_{\ell} = \E \prod_{j=1}^\infty F(g_j).
\hfill \hfill
\]


{\Large \section{Proof of Theorem \ref{thm:coprime}}\label{sec:coprime}}

The proof is similar to that of Theorem \ref{main_theorem},
but the details are simpler.  In particular, we do not need the
work from Section \ref{sec:model}.  As before, the symbols
$q$ and $q_i$ denote primes.

For fixed $k\in \NN$ and $\eps>0$ let
\[
\cN_{k,\eps}(x) = \# \Big\{ n=q_1\cdots q_k \in (x,2x] : \(n,\bn\)=1,  \forall i,
\; q_i\ge x^{\eps} \text{ and } q_i\not\in \bigcup_{s\le 1/\eps+1}
(x^{(1-\eps^3)/s},8x^{1/s}] \Big\}.
\] 
In contrast to the argument of the previous section, here
we will take $\rho=\eps^3$, for reasons that will become
apparent later.

\begin{lem}\label{Nkx}
For any fixed $k\ge 2$ and $\eps>0$ we have
\[
|\cN_{k,\eps}(x)| = \frac{x}{\log x} 
\Bigg\{ \frac{1}{k!}\;\; \idotsint\limits_
{\substack{\mathbf{u}\in[\eps,1]^k \\ u_1+\cdots+u_k=1}} 
h(u_1) \cdots h(u_k)\, du_1 \cdots du_{k-1} + O_k(\eps^2) +O_{k,\eps} \pfrac{1}{\log x} \Bigg\},
\]
where $h(v) = v^{-1} 2^{1-\fl{1/v}}$.
\end{lem}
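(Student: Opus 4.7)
The plan is to reduce to ordered tuples of primes, apply Proposition~\ref{fracparts} dyadically, and convert the resulting sum back to an integral by the prime number theorem. Since any $n\le 2x$ of the form $q_1\cdots q_k$ with two equal $q_i$'s is divisible by $p^2$ for some prime $p\ge x^\eps$, there are only $O(x^{1-\eps})$ such $n$. Hence if $S(x)$ denotes the number of ordered prime tuples $(q_1,\ldots,q_k)$ satisfying the conditions of $\cN_{k,\eps}(x)$ together with $(q_1\cdots q_k,\bn)=1$, then $k!\cdot|\cN_{k,\eps}(x)|=S(x)+O(x^{1-\eps})$. I would partition the count of $S(x)$ by dyadic blocks $\MM=(M_1,\ldots,M_k)$ with each $M_i$ a power of $2$, setting
\[
\cR(\MM)=\{(z_1,\ldots,z_k):M_i<z_i\le 2M_i,\ x<z_1\cdots z_k\le 2x,\ \text{condition (c) of $\cN_{k,\eps}$ holds}\}.
\]
By the argument of Lemma~\ref{Yksimple}, $\cR(\MM)$ is $(1/\eps+2)$-simple. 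Condition~(c), stated with $\rho=\eps^3$, transfers to hypothesis~(iii) of Proposition~\ref{fracparts} with the same $\rho$: the containment $M_i<q_i\le 2M_i$ combined with $q_i\notin(x^{(1-\eps^3)/s},8x^{1/s}]$ forces $M_i\notin(x^{(1-\eps^3)/s},4x^{1/s}]$. Therefore Proposition~\ref{fracparts}, with $m=1$, $t=1/\eps+2$, and equation~\eqref{Q0}, applies block by block.

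Summing the outputs over $\MM$ and invoking the argument after~\eqref{sjM} (condition~(c) forces $\fl{\log x/\log q_j}=\fl{\log x/\log M_j}$ for $q_j\in(M_j,2M_j]$) yields
\[
S(x)=(1+O_k(\eps))\sum_{\qq}2^{-s_1(\qq)-\cdots-s_k(\qq)}+O_{k,\eps}(x^{1-\xi}),
\]
where $\qq$ ranges over ordered prime tuples in the region defining $\cN_{k,\eps}(x)$ (without the coprimality requirement) and $s_j(\qq)=\fl{\log x/\log q_j}-1$. I then apply the prime number theorem with classical error to each variable $q_i$ in turn, using partial summation to handle the piecewise-constant weight $2^{1-\fl{\log x/\log q_i}}$. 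Its discontinuities at $q_i=x^{1/s}$ lie inside the forbidden intervals of~(c) and so do not obstruct the summation, giving
\[
S(x)=(1+O_k(\eps))\int\cdots\int\prod_{i=1}^k\frac{2^{1-\fl{\log x/\log t_i}}}{\log t_i}\,dt_1\cdots dt_k+O_{k,\eps}(x/\log^A x).
\]

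The final step is the change of variables $u_i=\log t_i/\log x$, so that $dt_i=x^{u_i}(\log x)\,du_i$, $\log t_i=u_i\log x$, and the constraint $\prod t_i\in(x,2x]$ becomes $v:=u_1+\cdots+u_k-1\in(0,\log 2/\log x]$. The integrand transforms to $\prod_i x^{u_i}h(u_i)$ with $h(u)=u^{-1}2^{1-\fl{1/u}}$. For $u_1,\ldots,u_{k-1}$ fixed, condition~(c) keeps $u_k$ bounded away from the discontinuities of $h$ throughout the thin slab of width $\log 2/\log x$, so the inner integration evaluates to
\[
h(u_k^*)\int_0^{\log 2/\log x}x^{1+v}\,dv+O_{k,\eps}\!\left(\frac{x}{(\log x)^2}\right)=\frac{x}{\log x}h(u_k^*)+O_{k,\eps}\!\left(\frac{x}{(\log x)^2}\right),
\]
where $u_k^*=1-u_1-\cdots-u_{k-1}$. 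Condition~(c) also cuts $O(1/\eps)$ forbidden strips of width $O(\eps^3)$ each from the $(k-1)$-simplex $\{u_i\ge\eps,\ \sum u_i=1\}$, for a total measure loss of $O_k(\eps^2)$. Combining and dividing by $k!$ yields the claimed formula. The main obstacle is the bookkeeping: the multiplicative error from Proposition~\ref{fracparts}, the PNT error, the slab-integration correction, and the excluded-strip contribution must each be verified to consolidate (after normalization by $x/\log x$) into the stated $O_k(\eps^2)+O_{k,\eps}(1/\log x)$.
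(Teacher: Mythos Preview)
Your proposal is correct and follows essentially the same route as the paper: dyadic partition, Proposition~\ref{fracparts} with $m=1$ and $\rho=\eps^3$, prime number theorem to pass to an integral, the substitution $u_i=\log t_i/\log x$, and then collapsing the thin slab $\sum u_i\in(1,1+\log 2/\log x]$ onto the simplex while bounding the measure of the strips excised by condition~(c). The only tactical difference is that the paper replaces $h$ by a Lipschitz approximation $h_\eps$ before the slab integration (so that the $O(1/\log x)$ shift in $u_k$ can be handled uniformly), whereas you argue directly that condition~(c) keeps the slab away from the jumps of $h$; either works.
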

 
\begin{proof}
Consider $n\in \cN_{k,\eps}(x)$, and write $n=q_1\cdots q_k$
with $q_1<\cdots < q_k$.  Let
\[
\cT = \Bigg\{ x^{\eps} \le y_1 < \cdots < y_k \le x : x<y_1\cdots y_k \le 2x, \forall i:\, y_i\not \in \bigcup_{s\le 1/\eps+1}
\left(x^{(1-\eps^3)/s},8x^{1/s}\right] \Bigg\},
\]
so that $\qq=(q_1,\ldots,q_k)\in \cT$.
For each $i$, let $M_i$ be the unique power of two such that
$M_i < q_i \le 2M_i$, and for a fixed $\MM=(M_1,\ldots,M_k)$
let $T(\MM) = \{ \mathbf{y}\in \cT : M_i<y_i \le 2M_i\; (1\le i\le k)\}$.

With $\MM$ fixed, define $s_j=\lfloor \frac{\log x}{\log M_j}\rfloor -1$. 
Then the hypotheses of Proposition \ref{fracparts} hold
with $\rho=\eps^3$.
 The set $\cT$ is $(1/\eps+2)-$simple and hence
 by Proposition \ref{fracparts} with $m=1$, we get that
 \be\label{Nkex}
 |\cN_{k,\eps}(x)| = 
\ssum{ \qq\in \cT(\MM) \\ \big(q_1\cdots q_k,\bn\big)=1} 1
  = 
(1+O(k^2 \eps)) 2^{-(s_1+\cdots+s_k)}
\ssum{ \qq\in \cT(\MM)} 1 +O_{k,\eps}(x^{1-\xi}).
 \ee
Using that $\cT$ is $(1/\eps+2)$-simple,
repeated application of the prime number theorem
with classical error term implies that, for some
fixed positive $c$,
\begin{align*}
\ssum{ \qq\in \cT(\MM)} 1 &= \int_{\cT(\MM)} \frac{d\mathbf{y}}{(\log y_1)\cdots (\log y_k)} + 
O_{k,\eps}( M_1 \cdots M_k e^{-c \min_i \sqrt{\log M_i}}) \\
&=\int_{\cT(\MM)} \frac{d\mathbf{y}}{(\log y_1)\cdots (\log y_k)} + O_{k,\eps}( x e^{-c \sqrt{\eps \log x}}).
\end{align*} 
Now for any $\mathbf{y}\in \cT(\MM)$, due to the arguments used in the previous section, we have 
$s_j=\fl{\frac{\log x}{\log y_j}}-1$ for each $j$. 
There are $\ll_k (\log x)^k$ possible tuples $\MM$. 
Thus, after summing over all $\MM$
and recalling \eqref{Nkex},
 we obtain
\begin{equation}
\label{sum_M}
 |\cN_{k,\eps}(x)| = O_{k,\eps}(x^{1-\xi/2} + x/\log^5 x) + 
 (1+O_k(\eps)) 
 \int_{\cT} \prod_{j=1}^k \frac{2^{1-\lfloor \frac{\log x}{\log y_j} \rfloor}}{\log y_j}
 d\mathbf{y}.
 \end{equation}
 
Making the change of variables $u_i = \frac{\log y_i}{\log x}$
for each $i$, and recalling the definition of $h(\cdot)$, we see that
\[
\int_{\cT} \prod_{j=1}^k \frac{2^{1-\lfloor \frac{\log x}{\log y_j} \rfloor}}{\log y_j} d\mathbf{y} = \int_{\cU}
h(u_1)\cdots h(u_k) x^{u_1+\cdots +u_k} \,
du_1 \cdots du_k, 
\]
where
\[
\cU = \Big\{ \eps \le u_1 \le \cdots \le u_k\le 1 : 1\le u_1+\cdots+
u_k\le 1+\frac{\log 2}{\log x}; \forall i, u_i\not
 \in \bigcup_{s\le 1/\eps+1}\Big[ \frac{1-\eps^3}{s},\frac{1}{s} +\frac{\log 8}{\log x}\Big] \Big\},
\] 
Replacing the condition $\eps \le u_1 \le \cdots \le u_k\le 1$ with the condition $\mathbf{u} \in [\eps,1]^k$
introduces a factor $1/k!$ in the integral, as the remaining conditions in
the definition of $\cU$ are symmetric in the variables
$u_1,\ldots,u_k$.  In addition, the set of $\mathbf{u} \in [\eps,1]^k$
that satisfy $1\le u_1+\cdots+u_k \le 1+\frac{\log 2}{\log x}$
and also
 $u_i \in [\frac{1-\eps^3}{s},\frac{1}{s}+\frac{\log 8}{\log x}]$ for some $i\le k$ and some $s\le 1/\eps+1$
has Lebesgue measure $O(k\eps^2/\log x)$.
The integrand is $O(2^k x)$ and therefore
\[
\int_{\cT} \prod_{j=1}^k \frac{2^{1-\lfloor \frac{\log x}{\log y_j} \rfloor}}{\log y_j} d\mathbf{y}
 = \frac{1}{k!} \int_{\cV} x^{u_1+\cdots + u_k} h(u_1) \cdots h(u_k) du_1\cdots du_{k}
 + O \pfrac{\eps^2 x}{\log x},
\]
where
\[
\cV = \bigg\{\mathbf{u}\in [\eps,1]^k  :1\le u_1+\cdots+u_k\le 1 + \frac{\log 2}{\log x} \bigg\}
\]

Notice that in the region $\cV$,
$u_i \le 1-\eps/2$ for all $i$ (assuming $x\ge \exp(10/\eps)$, say).
Further analysis is complicated by the discontinuities of
$h(u)$ at $u=1/s$, $s\in \NN$.  The function $h()$ is,
however, bounded by 2.  We'll replace the function
$h$  by the continuous function
$h_\eps(u)$ on $0\le u\le 1$, which equals $h(u)$ whenever
$|u-1/s| \ge \eps^4$ for all $2\le s\le 1/\eps+1$,
and otherwise is linear on each segment 
$[1/s-\eps^4,1/s+\eps^4]$, $2\le s\le 1/\eps+1$.
As before, the set of $\mathbf{u}\in \cV$ that
also satisfy $|u_i-1/s| \ge \eps^4$ for $i$ and
 some $2\le s\le 1/\eps+1$
has Lebesgue measure $O(k\eps^3/\log x)$.  We thus obtain
\be\label{TV}
\int_{\cT} \prod_{j=1}^k \frac{2^{1-\lfloor \frac{\log x}{\log y_j} \rfloor}}{\log y_j}  d\mathbf{y}
 = \frac{1}{k!} \int_{\cV} x^{u_1+\cdots + u_k} h_\eps(u_1) \cdots h_\eps(u_k) du_1\cdots du_{k}
 + O \pfrac{\eps^2 x}{\log x}.
\ee
Since $h(u)$ has bounded derivative on $[0,1) \setminus \{1/2,1/3,1/4,\ldots\}$,
the function $h_\eps$ satisfies 
\[
|h_\eps(a)-h_\eps(b)| \ll \eps^{-4} |a-b| \qquad (a,b\in [0,1]).  \]
Hence,
letting $v=u_1+\cdots+u_k$, and using that
$|u_i - u_i/v| \ll 1/\log x$ for each $i$, we get
\begin{align*}
\int_{\cV} x^{u_1+\cdots + u_k} h_\eps(u_1)& \cdots h_\eps(u_k) du_1\cdots du_{k} =  \int_{\cV} x^v h_\eps(u_1/v) \cdots h_\eps(u_k/v) \, du_1\cdots du_k +O_{k,\eps}\pfrac{x}{\log^2 x}\\
&= \int_1^{1+\frac{\log 2}{\log x}} x^v
\idotsint\limits_{\substack{\mathbf{u}\in [\eps,1]^k \\ u_1+\cdots+u_k=v}} h_\eps(u_1/v)\cdots h_\eps(u_k/v)\, du_1\cdots du_{k-1}\, dv+O_{k,\eps}\pfrac{x}{\log^2 x}\\
&=\int_1^{1+\frac{\log 2}{\log x}} x^v v^{k-1} \, dv
\idotsint\limits_{\substack{\mathbf{u}\in [\eps,1]^k \\ u_1+\cdots+u_k=1}} h_\eps(u_1)\cdots h_\eps(u_k)\, du_1\cdots du_{k-1}+O_{k,\eps}\pfrac{x}{\log^2 x}.
\end{align*}
Now $v^{k-1} = 1+O_k(1/\log x)$.  Recalling \eqref{TV},
we arrive at
\[
\int_{\cT} \prod_{j=1}^k \frac{2^{1-\lfloor \frac{\log x}{\log y_j} \rfloor}}{\log y_j} d\mathbf{y}= \frac{x}{k! \log x}
\idotsint\limits_{\substack{\mathbf{u}\in [\eps,1]^k \\ u_1+\cdots+u_k=1}} h_\eps(u_1)\cdots h_\eps(u_k)\, du_1\cdots du_{k-1}+O_{k,\eps}\pfrac{x}{\log^2 x}+O\pfrac{\eps^2 x}{\log x}.
\]
We conclude by replacing each
$h_\eps(u_i)$ with $h(u_i)$.
Since the set
\[
\{ \mathbf{u}\in [\eps,1]^k : u_1+\cdots+u_k=1; \exists i,
h(u_i)\ne h_\eps(u_i) \}
\]
has $(k-1)$-dimensional Lebesgue measure $O(k \eps^3)$,
this produces an additive error term
of order $O(\eps^3 x/\log x)$
(again, using that $h()$ and $h_{\eps}$
are bounded).  Thus, recalling \eqref{Nkex}
and \eqref{sum_M}, the proof is complete.
\end{proof}

 \begin{proof}[Proof of Theorem \ref{thm:coprime} from Lemma \ref{Nkx}]
 Let $\cN_k$ be the set of $n\in (x,2x]$ with $k$ distinct 
prime factors and with $(n,\bn)=1$.
Fix $\eps>0$.   Clearly 
\[
\cN_1 \sim \frac{x}{\log x}.
\]
Now let $k\ge 2$.
  Then one of the following is true for any
$n\in \cN_k$:
\begin{enumerate}
\item[(1)] $n\in \cN_{k,\eps}(x)$;
\item[(2)] $n$ has a prime factor smaller than $x^{\eps}$;
\item[(3)] $n$ is divisible by the square of some prime larger than $x^{\eps}$; or
\item[(4)] $n$ has a prime factor in $\bigcup_{s\le 1/\eps+1}
(x^{(1-\eps^3)/s},4x^{1/s}]$. 
\end{enumerate}
Lemma \ref{Nkx} gives the size of $\cN_{k,\eps}(x)$.
By Proposition \ref{coprime-smpr}, the number of
$n$ satisfying (2) is $O(e^{-1/(3\eps)} x/\log x)$.
The number of $n$ satisfying (3) is evidently 
$\ll x^{1-\eps/2}$.  
Fixing $s$, the number of $n\in \cN_k$,
with all prime factors $\ge x^{\eps}$ and with a prime factor in
$I=(x^{(1-\eps^3)/s},4x^{1/s}]$ is 
zero for $s=1$, and when $s\ge 2$ it is
at most
\dalign{
\sum_{q_1\in I}
\ssum{q_2,\cdots,q_{k-1} \\ \forall i: \, q_i\ge x^{\eps} \\
 q_1\cdots q_{k-1} \le 2x^{1-\eps}} \pi\pfrac{x}{q_1\cdots q_{k-1}} &\ll \sum_{q_1\in I} \sum_{q_2,\ldots,q_k\in (x^{\eps},x]}
 \frac{x}{\eps q_1\cdots q_{k-1} \log x} \\
 &\ll \frac{x}{\log x} \, \frac{(\log 2/\eps)^{k-1} \eps^3}{\eps}.
}
After summing the above over $s\le 1/\eps+1$, we
see that the number of $n$ satisfying (4) is
\[
\ll \frac{\eps(\log (2/\eps))^{k-1} x}{\log x}.
\]
We conclude that
\[
|\cN_k| = \frac{x}{\log x} \Bigg\{ \frac{1}{k!} \;\; \idotsint\limits_{\substack{\mathbf{u}\in [\eps,1]^k \\ u_1+\cdots+u_k=1}}  
h(u_1) \cdots h(u_k)\, du_1\cdots du_{k-1} + O\( e^{-1/(3\eps)} 
+ \eps (\log 2/\eps)^{k-1} + o(1) \)  \Bigg\}.
\]
The function $h()$ is bounded above by 2, thus upon letting
$\eps\to 0$ we find that
\be\label{Nk}
|\cN_k| \sim \frac{x}{k! \log x} \idotsint
\limits_{\substack{0\le u_1,\ldots,u_k\le 1 \\ u_1+\cdots + u_k=1}} 
h(u_1) \cdots h(u_k)\, du_1\cdots du_{k-1}
\qquad (x\to \infty)
\ee
for each fixed $k$.  On the other hand, if $n$ has more than $K$
prime factors, then $n$ has  a prime factor $<x^{1/K}$,
and by Proposition \ref{coprime-smpr}, there are $O(e^{-K/3} x/\log x)$ such integers.  That is, for any fixed $K$,
\[
\# \{ \cB \cap [1,x] \} = \sum_{k=1}^K |\cN_k| + O\( e^{-K/3} \frac{x}{\log x} \).
\]
Again using that $h(u) \le 2$ for all $u$, we wee that
$|\cN_k| \le \frac{2^k}{(k!)^2}\frac{x}{\log x}$.
Thus, letting $K\to \infty$, Theorem \ref{thm:coprime} follows.
 \end{proof}


{\Large \section{Numerical estimates of the density}\label{sec:numerics}}

It is convenient here to go back to the variables $Y_i$
 given in \eqref{Un}.
Moreover, in order for the product in the definition to be nonzero, 
we need $Y_i \le \frac{1}{\ell+1}$ for all $i$.  In particular, this shows that
\be\label{clupp}
c_\ell \le \rho(\ell+1) = e^{-(1+o(1))\ell \log \ell}
\ee
as $\ell\to \infty$,
where $\rho$ is the Dickman function.  We have
\be\label{clprod}
c_\ell = \E \prod_{j=1}^\infty g(Y_j), \quad g(y)=
\begin{cases} 1-2^{1-\fl{1/y}}
\sum_{h=0}^{\ell-1} \binom{\fl{1/y}-1}{h} & \text{ if } 0<y\le \frac{1}{\ell+1} \\ 0 & \text{ if } y>\frac{1}{\ell+1}.
\end{cases}
\ee
We estimate $c_\ell$ using Laplace transforms.  By
Theorem 3.2 of \cite{Handa}, we have that
\be\label{laplace-F}
F(s) := \int_0^\infty e^{-st} \Big(\E \prod_{j=1}^\infty g(t Y_j) \Big)\, dt = \frac{1}{s}
\exp \Bigg( \int_0^\infty \frac{g(z)-1}{z} e^{-sz}\, dz  \Bigg)
\qquad (\Re s>0) .
\ee
Theorem 3.2 of \cite{Handa} is only stated for real $s>0$, but
the proof gives the result in the full half-plane $\Re s>0$.
The left side of \eqref{laplace-F} is an entire function of $s\in \CC$, since
\[
\E \prod_{j=1}^\infty g(t Y_j) \le \rho(t(\ell+1))
\]
decays faster than exponentially in $t$; however the right side 
is only well defined for $\Re s>0$.
We massage the right side using the standard function
\be\label{E1}
E_1(z) = \int_z^\infty \frac{e^{-t}}{t}\, dt.
\ee
Since 
$g(z)=0$ for $z>\frac{1}{\ell+1}$ we may decompose
\[
\int_0^\infty \frac{g(z)-1}{z} e^{-sz}\, dz =
\int_0^{\frac{1}{\ell+1}} \frac{g(z)-1}{z} e^{-sz}\, dz
- E_1\pfrac{s}{\ell+1}.
\]
We next use the fact that $g(z)$ is a
step-function with jumps at the points $1/k$, where
$k$ is an integer satisfying $k\ge \ell+1$.
Using the Pascal relation, and in the notation of
Stieltjes integration, we have
\dalign{
dg\pfrac{1}{k} = g\pfrac{1}{k-1} - g\pfrac{1}{k} 
&= -2^{2-k}\sum_{h=0}^{\ell-1} \binom{k-2}{h} +
2^{1-k} \sum_{h=0}^{\ell-1} \( \binom{k-2}{h-1}+\binom{k-2}{h}\)\\
& = -2^{1-k} \binom{k-2}{\ell-1}. 
}
Thus, applying (Stieltjes) integration by parts we find that
\dalign{
  \int_0^{(1/(\ell+1))^+} (g(z)-1) \frac{e^{-s z}}{z}\, dz &=  E_1\pfrac{s}{\ell+1}
  + \int_0^{(1/(\ell+1))^+} E_1 (s z) d g(z) \\
&=  E_1\pfrac{s}{\ell+1} - \sum_{k\ge \ell+1} 2^{1-k}
\binom{k-2}{\ell-1} E_1\pfrac{s}{k}.
}
Here we used that $\lim_{y\to 0^+} g(y)=1$ and
$\lim_{z\to 0} E_1(sz)(g(z)-1)=0$.
Inserting this into \eqref{laplace-F} and inverting,
we conclude the following:

%
%
%
%
\begin{prop}\label{clJ}
For  any $\sigma>0$, we have
\[
c_\ell = \frac{1}{2\pi i} \int_{\sigma-i\infty}^{\sigma+i\infty}
\frac{e^{s}}{s} \exp\Big\{ - \sum_{k\ge \ell+1} 2^{1-k}
\binom{k-2}{\ell-1} E_1\pfrac{s}{k}  \Big\} \, ds.
\]
\end{prop}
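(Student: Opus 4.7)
The plan is to recognize the right-hand side as the inverse Laplace transform, evaluated at $t=1$, of
\[
F(s) := \frac{1}{s}\exp\Bigl\{-\sum_{k\ge\ell+1} 2^{1-k}\binom{k-2}{\ell-1} E_1(s/k)\Bigr\}.
\]
The calculations immediately preceding the Proposition already establish that, for $\Re s > 0$,
\[
F(s) = \int_0^\infty e^{-st}\,\phi(t)\,dt, \qquad \phi(t) := \E\prod_{j=1}^\infty g(tY_j),
\]
while \eqref{clprod} gives $\phi(1) = c_\ell$. The Proposition is therefore the Mellin--Bromwich inversion
\[
\phi(1) = \frac{1}{2\pi i}\int_{\sigma-i\infty}^{\sigma+i\infty} e^{s} F(s)\,ds
\]
for any $\sigma > 0$. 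The remaining task is to justify this inversion, which rests on regularity of $\phi$ and on convergence of the contour integral.

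First, I would verify the regularity of $\phi$. Because $g$ vanishes for $y > 1/(\ell+1)$, one has the pointwise bound $\phi(t) \le \rho(t(\ell+1))$ (compare \eqref{clupp}), which decays faster than any exponential; hence $F$ extends to an entire function, and the explicit formula above holds throughout $\Re s > 0$. To establish continuity of $\phi$, observe that $g$ is a bounded step function with jumps only at the points $1/k$, $k \ge \ell+1$; for each realization of $Y_2, Y_3, \ldots$ the map $t \mapsto g(tY_1)\prod_{j\ge 2} g(tY_j)$ has at most countably many discontinuities in $t$, and averaging over the absolutely continuous $Y_1 = U_1$ via dominated convergence produces a continuous function $\phi$ on $(0,\infty)$. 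The same analysis, together with the monotonicity of $g$ between its jumps, shows that $\phi$ is of locally bounded variation on every compact subinterval of $(0,\infty)$.

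The main obstacle is the convergence of the vertical contour integral. On the line $\Re s = \sigma$ one checks, using the asymptotic $E_1(z) \sim e^{-z}/z$ as $|z|\to\infty$ in $|\arg z|<\pi$, that $|E_1(s/k)| \ll k/|s|$ uniformly in $k \ge \ell+1$ as $|\Im s|\to\infty$. Summing against the rapidly decaying weights $2^{1-k}\binom{k-2}{\ell-1}$ shows that the exponent is $O_\ell(1/|s|)$, so the exponential factor equals $1 + O_\ell(1/|s|)$, and the integrand behaves like $e^{\sigma}/|s|$ on the contour. Consequently the integral converges only conditionally, as a Cauchy principal value, which is where the argument demands genuine care. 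I would resolve this by invoking the standard bounded-variation form of the Laplace inversion theorem, which guarantees that for $\phi$ continuous and of locally bounded variation near $t$, the principal-value Bromwich integral reproduces $\phi(t)$; setting $t=1$ then yields the claimed identity for $c_\ell$.
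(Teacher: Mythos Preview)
Your proposal is correct and follows the same route as the paper: the text preceding the Proposition already identifies $F(s)$ as the Laplace transform of $\phi(t)=\E\prod_j g(tY_j)$ via Handa's formula and rewrites the exponent through the Stieltjes integration by parts, and the paper then simply says ``inverting, we conclude'' the Proposition. Your write-up supplies the analytic justification (continuity and monotonicity of $\phi$, conditional convergence of the Bromwich integral) that the paper leaves implicit; one small imprecision is that the bound $|E_1(s/k)|\ll k/|s|$ is not literally uniform in all $k\ge\ell+1$, but since the weights $2^{1-k}\binom{k-2}{\ell-1}$ decay exponentially the conclusion $\sum_k a_k E_1(s/k)=O_\ell(1/|s|)$ still holds.
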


Computing $c_\ell$ was accomplished with the Python scripts \texttt{mpmath}, which have a built-in
function for numerically inverting the Laplace transform, and 
which can can be computed to arbitrary precision.
Table \ref{table:c_ell} shows truncated values 
with precision 50, 100 and 200 digits.
The values for $\ell=1$ are unstable in the 8th decimal place,
while the calculations appear more accurate for larger $\ell$.

\begin{table}[h]
\begin{lstlisting}
from mpmath import *  
mp.dps=100  # digit accuracy of internal computations
def F(s,l):
    x=mpf('0.0')
    for k in range(l+1,200):x=x+2**(1-k)*binomial(k-2,l-1)*mp.e1(s/k)
    return(mp.exp(-x)/s)
c = lambda l : mp.invertlaplace(lambda z: F(z,l),1)
\end{lstlisting}
\caption{Python code to compute $c_\ell$}
\end{table}

\begin{table}[h]
\begin{tabular}{|c|r|r|r|c|}
\hline
$\ell$ & mp.dps=50 & mp.dps=100 & mp.dps=200 & scale\\
\hline
1 & 0.114247499194 & 0.114247430441 & 0.114247438905 & 1 \\
2 & 3.227780974290 & 3.227778322653 & 3.227778439553 & $10^{-3}$\\
3 & 3.151177764641 & 3.151177748965 & 3.151177749010 & $10^{-5}$ \\
4 & 1.330129946810 & 1.330129946696 & 1.330129946698 & $10^{-7}$\\
5 & 2.832481214762 & 2.832481214761 & 2.832481214761 & $10^{-10}$\\
6 & 3.403909048013 & 3.403909048013 & 3.403909048013 & $10^{-13}$\\
\hline
\end{tabular}

\medskip

\caption{Values of $c_\ell$ computed by Python code with varying internal precision mp.dps}
\label{table:c_ell}
\end{table}

As a 2nd check, we estimated $c_\ell$ an entirely different way,
using the definition of $c_\ell$ given in Theorem \ref{main_theorem} and using Monte Carlo integration.
We took $10^{10}$ random vectors of uniform-$[0,1]$
random variables $(U_1,\ldots,U_{50})$ and
used these to estimate the expectation.  The results
are tabulated in Table \ref{table:MC}.
Of course, one expects deviations from the mean coming from the 
Central Limit Theorem.  But these do appear to confirm at least the first 4 digits of the calculations in Table \ref{table:c_ell}.

\begin{table}[h]
\begin{tabular}{|c|r|}
\hline
$\ell$ & approximate $c_\ell$\\
\hline
1 & 0.1142464511  \\
2 & 0.0032274430 \\
3 & 0.0000314983 \\
\hline
\end{tabular}

\medskip

\caption{Values of $c_\ell$ computed by Monte Carlo methods, 
$10^{10}$ sample vectors}
\label{table:MC}
\end{table}

\medskip

%
%
{\Large \section{Proof of Theorem \ref{clasym}}\label{sec:clasym}}
%
%

We use Proposition \ref{clJ} and invert using the saddle-point method,
as in \S III.5 of \cite{Tenbook}.
By the shape of the binomial distribution, $g(z)$ transitions from
being close to 1 to being very small in the vicinity of
$z= \frac{1}{2\ell}$.
Recall the definition \eqref{E1} of $E_1(z)$ and define
\be\label{ein}
\Ein(s) := \gamma +\log s + E_1(s) = 
 \int_0^s \frac{1-e^{-t}}{t}\, dt,
\ee
which is an entire function of $s$;
see \cite[Theorem 5.9, \S III.5]{Tenbook} for a proof of
the two respresentations in \eqref{ein}.
By  \cite[Theorem 5.10, \S III.5]{Tenbook}, we have
\be\label{rhohat}
\hat{\rho}(s):= \int_{0}^\infty \rho(t) e^{-ts}\, dt = 
e^{\gamma-\Ein(s)}.
\ee

To bound the integral in Proposition \ref{clJ}, we define
\be\label{Jwdef}
J(w,u) := \sum_{k=\ell+1}^\infty 2^{1-k} \binom{k-2}{\ell-1} \Big( E_1(w)-E_1\pfrac{wu}{k} \Big) = E_1(w) -  \sum_{k=\ell+1}^\infty 2^{1-k} \binom{k-2}{\ell-1} E_1\pfrac{wu}{k}.
\ee
In this notation, plus \eqref{ein}, Proposition \ref{clJ} implies that
\be\label{clJ2}
\begin{split}
c_\ell &= \frac{1}{2\pi i u} \int_{\sigma-i\infty}^{\sigma+i\infty}
e^s \exp \big\{ \gamma - \Ein(s/u) + J(s/u,u)  \big\}\, ds\\
&= \frac{1}{2\pi i} \int_{\sigma-i\infty}^{\sigma+i\infty}
e^{uw} \exp \big\{ \gamma - \Ein(w) + J(w,u)  \big\}\, dw,\\
\end{split}
\ee
where $u\ge 1$ is an arbitrary parameter, to be chosen later to make
$J(s/u,u)$ small when $s\approx \sigma$.

Comparing \eqref{clJ2} with \eqref{rhohat}, we will see that
the optimal choise of $u$
is very close to the optimal value needed to compute $\rho(u)$ by inverting $\hat{\rho}$, namely 
\be\label{sigma}
\sigma = -\xi_0 :=   -\xi(u),
\ee
where $\xi=\xi(u)$ satisfies $e^{\xi}=1+u\xi$.  We note that
\be\label{xi-approx}
\xi(u) = \log(u\log u) + \frac{\log\log u}{\log u}
+O\pfrac{(\log\log u)^2}{\log^2 u}.
\ee

We record estimates for $\hat{\rho}(s)$ on vertical segments from 
\cite[Lemma 5.12, Ch. III]{Tenbook}.

\begin{lem}\label{lem:Ten}
Let $u\ge 2$ and $\xi=\xi(u)$.  For $w=-\xi+i\tau$, we have
\[
\hat{\rho}(w) = e^{\gamma-\Ein(w)} = \begin{cases}
O\bigg( \exp \left\{ -\Ein(-\xi) - \frac{\tau^2 u}{2\pi^2}  \right\}
\bigg) & \text{ if } |\tau|\le \pi \\
O \bigg( \exp \left\{ -\Ein(-\xi) - \frac{u}{\pi^2+\xi^2}  \right\}
\bigg) & \text{ if } |\tau|> \pi \\
\frac{1}{w} \( 1 + O\pfrac{1+u\xi}{|w|} \) & \text{ if } |\tau| > 1+ u\xi.
\end{cases}
\]
\end{lem}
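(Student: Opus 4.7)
The plan is to establish each of the three estimates by direct computation from $\hat\rho(w) = e^{\gamma - \Ein(w)}$, following Tenenbaum \cite[Ch.~III, Lem.~5.12]{Tenbook}. A useful reformulation is that $\hat\rho(-\xi + i\tau)/\hat\rho(-\xi) = \phi(\tau)$, the characteristic function of the tilted Dickman measure $d\mu_\xi(t) = \rho(t)e^{\xi t}/\hat\rho(-\xi)\,dt$ on $[0,\infty)$, whose mean is $u$ (the saddle parameter) and whose variance equals
\[
\frac{d^2}{d\xi^2}\log\hat\rho(-\xi) = \frac{du}{d\xi} = \frac{e^\xi - u}{\xi} \sim u \quad (u \to \infty).
\]

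For $|\tau| \le \pi$, I would apply the identity
\[
|\phi(\tau)|^2 = 1 - \int\!\!\int \bigl(1 - \cos(\tau(t_1-t_2))\bigr)\,d\mu_\xi(t_1)\,d\mu_\xi(t_2)
\]
together with the elementary inequality $1 - \cos x \ge 2x^2/\pi^2$ on $|x| \le \pi$ and the variance estimate above; this produces the factor $\exp(-\tau^2 u/(2\pi^2))$. Multiplying by $\hat\rho(-\xi) = e^{\gamma - \Ein(-\xi)}$ gives the first bound. For $|\tau| > \pi$, the same double-integral identity combined with the concentration of $\mu_\xi$ on a window of width $\sqrt u$ around $u$, followed by a careful optimisation that tracks the exponential tilting parameter $\xi$, yields the uniform decay factor $\exp(-u/(\pi^2+\xi^2))$.

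For $|\tau| > 1+u\xi$, I would use the differential-delay equation \eqref{rho-diff} to integrate $\hat\rho(w) = \int_0^\infty \rho(t)e^{-wt}\,dt$ by parts. Since $\rho(0) = 1$ and $t\rho'(t) = -\rho(t-1)$ for $t > 1$, this gives
\[
\hat\rho(w) = \frac{1}{w} - \frac{1}{w}\int_1^\infty \frac{\rho(t-1)}{t}e^{-wt}\,dt,
\]
and a second integration by parts, together with the bound $|e^{-w}| = e^\xi \ll 1 + u\xi$ on the boundary terms and a trivial bound on the remaining integral, yields the stated asymptotic $\hat\rho(w) = (1/w)(1 + O((1+u\xi)/|w|))$.

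The principal obstacle is the intermediate range $|\tau| > \pi$: neither the Gaussian expansion near $\tau=0$ nor the large-$|w|$ integration-by-parts asymptotic applies directly, and the constants in the decay factor $\exp(-u/(\pi^2+\xi^2))$ must be extracted by a uniform interpolation that carefully tracks the joint dependence on $\tau$ and $\xi$.
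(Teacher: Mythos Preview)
The paper does not prove this lemma; it simply records it as a citation from Tenenbaum \cite[Ch.~III, Lemma 5.12]{Tenbook}. Your sketch is therefore an attempt to reconstruct Tenenbaum's argument, and two of the three cases contain real gaps as you have written them.

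For $|\tau|\le\pi$, your characteristic-function route breaks down at the key step: the inequality $1-\cos x\ge 2x^2/\pi^2$ requires $|x|\le\pi$, but in the double integral the argument is $x=\tau(t_1-t_2)$, and the tilted Dickman measure $\mu_\xi$ has standard deviation of order $\sqrt{u}$, so $|t_1-t_2|$ is typically much larger than $1$. Restricting to $|t_1-t_2|\le 1$ throws away essentially all of the variance. The direct route avoids this completely: from $\Ein(s)=\int_0^1 \frac{1-e^{-sv}}{v}\,dv$ one gets
\[
\Re\bigl(\Ein(-\xi+i\tau)-\Ein(-\xi)\bigr)=\int_0^1 \frac{e^{\xi v}\bigl(1-\cos(\tau v)\bigr)}{v}\,dv,
\]
and now $|\tau v|\le\pi$ is automatic because $0\le v\le 1$. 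Applying $1-\cos(\tau v)\ge 2\tau^2 v^2/\pi^2$ and computing $\int_0^1 v e^{\xi v}\,dv=(1+u(\xi-1))/\xi\asymp u$ gives the first bound cleanly. The second bound follows from the same integral representation with the cruder inequality $1-\cos(\tau v)\gg 1$ on a subinterval of $[0,1]$ of length $\asymp 1/\sqrt{\xi^2+\tau^2}$.

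For $|\tau|>1+u\xi$, your integration-by-parts scheme fails at the ``trivial bound on the remaining integral'': after two integrations by parts the leftover term is controlled by quantities like $\int_1^\infty \rho(t-1)t^{-2}e^{\xi t}\,dt$, and since $\rho(t)e^{\xi t}$ has total mass $\hat\rho(-\xi)=e^{\gamma-\Ein(-\xi)}$, which is of size $\exp(c\,u)$, this is enormous rather than $O(1+u\xi)$. The clean argument uses instead the identity $\hat\rho(w)=w^{-1}e^{-E_1(w)}$ (from $\Ein(w)=\gamma+\log w+E_1(w)$) together with the elementary bound $|E_1(w)|\ll e^{\xi}/|w|=(1+u\xi)/|w|$ on $\Re w=-\xi$, obtained by one integration by parts in $E_1(w)=\int_w^\infty t^{-1}e^{-t}\,dt$.
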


We also use a standard bound for the binomial distribution which follows quickly, for example, from Hoeffding's inequality applied to Bernouilli random variables $X_i$
with $\PR(X_i=0)=\PR(X_i=1)=1/2$.

\begin{lem}\label{bintail}
We have
\[
2^{1-k} \binom{k-2}{\ell-1} \ll \exp \left\{ - \frac{(k-2\ell)^2}{2k} \right\}.
\]
\end{lem}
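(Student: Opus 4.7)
The statement is a standard tail estimate for the symmetric binomial distribution, and the hint already announces the strategy: reduce to Hoeffding's inequality for Bernoulli$(1/2)$ random variables. The plan is first to rewrite the quantity probabilistically, then to dominate a pointwise mass by a one-sided tail probability, and finally to invoke Hoeffding's bound.

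Concretely, I would introduce i.i.d.\ Bernoulli$(1/2)$ random variables $X_1,\ldots,X_{k-2}$ and set $S = X_1+\cdots+X_{k-2}$. Then $\PR(S = \ell-1) = 2^{-(k-2)}\binom{k-2}{\ell-1}$, so
\[
2^{1-k}\binom{k-2}{\ell-1} = \tfrac{1}{2}\,\PR(S = \ell-1).
\]
The mean of $S$ equals $(k-2)/2$, so the integer $\ell-1$ sits a signed distance $-(k-2\ell)/2$ from the mean.

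Next, I would split on the sign of $k-2\ell$. If $k \ge 2\ell$ then $\ell-1 \le (k-2)/2$, so the containment $\{S=\ell-1\}\subseteq\{S\le \ell-1\}$ combined with the lower-tail Hoeffding inequality (each $X_i \in [0,1]$) yields
\[
\PR(S = \ell-1) \le \PR(S \le \ell-1) \le \exp\left(-\frac{(k-2\ell)^2}{2(k-2)}\right).
\]
The case $k < 2\ell$ is completely symmetric, using the upper-tail Hoeffding bound together with the containment $\{S=\ell-1\}\subseteq\{S\ge \ell-1\}$.

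Finally, the small discrepancy between $k-2$ and $k$ in the denominator of the exponent is cosmetic: for $k\ge 3$ one has $1/(k-2)\ge 1/k$, so the bound just obtained is already at least as strong as the claimed one, while the residual cases $k\le 2$ either force the binomial coefficient to vanish or leave both sides $O(1)$, which is absorbed into the implicit constant. I do not foresee any genuine obstacle; the lemma is essentially a direct citation of Hoeffding's inequality, with only the bookkeeping of the two tail cases and the trivial edge values to be tidied.
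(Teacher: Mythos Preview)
Your proposal is correct and follows exactly the route the paper indicates: the paper does not give a detailed argument but simply states that the bound ``follows quickly, for example, from Hoeffding's inequality applied to Bernoulli random variables $X_i$ with $\PR(X_i=0)=\PR(X_i=1)=1/2$,'' which is precisely what you carry out. Your handling of the two tail cases and of the $k-2$ versus $k$ denominator is the natural way to fill in the details.
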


\begin{lem}\label{Al_moments}
Let $A_\ell$ be the random variable with 
\[
\PR(A_\ell=k) = a_{k,\ell} := 2^{1-k} \binom{k-2}{\ell-1} \qquad (k\ge \ell+1).
\]
Then, for $\ell\ge 4$ we have
\begin{enumerate}
\item[(a)] $\E A_\ell = 2\ell+1$;
\item[(b)] $\E |A_\ell-2\ell|^{B} \ll_B \ell^{B/2}$ for all
$B\ge 0$;
\item[(c)] $\ds \E A_\ell^{-1} = \frac{1}{2\ell} + O\pfrac{1}{\ell^3}$;
\item[(d)] $\ds \E A_\ell^{-2} = \frac{1}{4\ell^2} + \frac{1}{8\ell^3} + O\pfrac{1}{\ell^4}$.
\item[(e)] $\ds \E A_l e^{z/A_\ell} \ll \ell e^{z/(2\ell)}$ uniformly for
$0\le z\le \ell^{4/3}$.
\end{enumerate}

\textbf{Remark.}  The random variables are well-defined since
$\sum_k \PR(A_\ell=k) = g(0^+)-g(1/\ell) = 1$.

\begin{proof}
Identity (a) follows from
\[
\E A_\ell = 1 + \E (A_\ell-1) = 1+ \sum_k (k-1)a_{k,\ell} = 1 +
2\ell \sum_k a_{k,\ell+1} = 2\ell+1.
\]
The estimate (b) follows from Lemma \ref{bintail}:
\[
\E |A_\ell-2\ell|^B \ll \sum_{k>\ell} |k-2\ell|^B e^{-\frac{1}{2k}(k-2\ell)^2} \ll \ell^{B/2}.
\]
We prove (c) and (d) in a manner similar to that of the proof of (a).  First, for $k\ge 4$ we have
\[
\frac{1}{k} = \frac{1}{k-2} - \frac{2}{(k-2)(k-3)} + O\pfrac{1}{k^3}
\]
and thus
\dalign{
\E A_\ell^{-1} &= O\pfrac{1}{\ell^3} + \sum_k \( \frac{1}{k-2}  - \frac{2}{(k-2)(k-3)}\) a_{k,\ell} \\
&=  O\pfrac{1}{\ell^3} + \frac{1}{2(\ell-1)} \sum_{k} a_{k,\ell-1}
- \frac{2}{4(\ell-1)(\ell-2)}\sum_k a_{k,\ell-2} \\
&= \frac{\ell-3}{2(\ell-1)(\ell-2)} + O\pfrac{1}{\ell^3}= \frac{1}{2\ell} + O\pfrac{1}{\ell^3}.
}
Similarly,
\dalign{
\E A_\ell^{-2} &= \sum_{k\ge \ell+1} a_{k,\ell} \( \frac{1}{(k-2)(k-3)} -
\frac{5}{(k-2)(k-3)(k-4)} + O\pfrac{1}{k^4} \) \\
&=  O\pfrac{1}{\ell^4}  + \frac{1}{4(\ell-1)(\ell-2)} \sum_{k}
a_{k,\ell-2} - \frac{5}{8(\ell-1)(\ell-2)(\ell-3)} \sum_k a_{k,\ell-3} \\
&= \frac{2\ell-11}{8(\ell-1)(\ell-2)(\ell-3)} +  O\pfrac{1}{\ell^4} \\
&= \frac{1}{4\ell^2} + \frac{1}{8\ell^3} + O\pfrac{1}{\ell^4}.
}
Finally we prove part (e) using Lemma \ref{bintail}.  Let $k_0=\fl{2\ell-10\ell^{2/3}}$
and $k_1=4\ell$.
 We have
\dalign{
\E A_\ell e^{z/A_\ell} &\ll \ell \ e^{z/k_0} + \ell \sum_{k=k_0+1}^{2\ell}
\exp \bigg\{ - \frac{(2\ell-k)^2}{2k} + \frac{z}{k}  \bigg\}
+ \ell \sum_{k>10\ell} \exp \bigg\{ - \frac{(k-2\ell)^2}{2k} + \frac{z}{k}  \bigg\}
\\
&\ll \ell \ e^{z/(2\ell)} + \ell \sum_{k=k_0+1}^{2\ell}
e^{-\ell^{1/3}} + \ell \sum_{k=k_1}^\infty e^{-k/8+z/k_1}
\\
&\ll  \ell \ e^{z/(2\ell)},
}
as required.
\end{proof}
\end{lem}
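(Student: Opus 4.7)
My plan is to exploit the fact that $A_\ell - 1$ is a negative binomial random variable (number of fair coin flips needed to get $\ell$ heads), but carry this out in the explicit summation form that the author's notation suggests. The unifying combinatorial identity is
\[
(k-1)\binom{k-2}{\ell-1} = \ell\binom{k-1}{\ell}, \qquad \binom{k-2}{\ell-1} = \frac{k-2}{\ell-1}\binom{k-3}{\ell-2}, \text{ etc.,}
\]
which allows one to reinterpret $c(k)\,a_{k,\ell}$ as a multiple of $a_{k,\ell'}$ for a shifted $\ell'$ whenever $c(k)$ is a polynomial in $k$ (respectively, a rational function of $k$ with small poles), and thereby reduce every moment-type computation to the statement $\sum_k a_{k,\ell'}=1$.

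For part (a), I would apply this identity directly: $\E A_\ell = 1 + \sum_k (k-1)a_{k,\ell} = 1 + 2\ell\sum_k a_{k,\ell+1}=2\ell+1$. Part (b) I would obtain by brute force from the tail bound of Lemma \ref{bintail}: estimate $\E|A_\ell-2\ell|^B \ll \sum_{k>\ell}|k-2\ell|^B\exp(-(k-2\ell)^2/(2k))$ and split at $|k-2\ell|\asymp \sqrt{\ell}$; both the Gaussian-like central block and the tail contribute $\ll_B \ell^{B/2}$.

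For (c) and (d), I would first expand $1/k$ and $1/k^2$ around $k$ as polynomials in $1/(k-2), 1/(k-2)(k-3), 1/(k-2)(k-3)(k-4)$ via partial fractions, the guiding principle being that $1/((k-2)(k-3)\cdots(k-j-1))\cdot\binom{k-2}{\ell-1}$ collapses nicely to a multiple of $\binom{k-j-1}{\ell-j}$ and hence (after adjusting the $2^{1-k}$ factor) to a constant times $a_{k,\ell-j+1}$, which sums to $1$. Summing the leading and subleading terms then gives the claimed asymptotics with explicit $O(1/\ell^3)$ (resp.\ $O(1/\ell^4)$) error, using only that the discarded terms have denominators of the appropriate order.

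Part (e) is the step I expect to be the most delicate. Here one cannot reduce algebraically, and the factor $e^{z/k}$ varies considerably over the support of $A_\ell$ for large $z$. My plan is to split at $k_0 := \lfloor 2\ell - C\ell^{2/3}\rfloor$ (taking $C$ a large constant to absorb the range $z\le \ell^{4/3}$) and $k_1 := 4\ell$. On $k \ge k_0$ with $k \le 2\ell$, write $z/k = z/(2\ell) + z(2\ell-k)/(2\ell k)$; the extra factor is bounded by $\exp(C z\ell^{-4/3})\ll 1$, and the Gaussian tail bound from Lemma \ref{bintail} controls this block at size $\ll \ell\, e^{z/(2\ell)}$. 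The left tail $k<k_0$ contributes $\ll \ell\exp(-\ell^{1/3})$ after using that $(2\ell-k)^2/(2k) \ge c\ell^{1/3}$ there, and the right tail $k\ge k_1$ is bounded by a geometric series because $(k-2\ell)^2/(2k) \ge k/8$ while $z/k \le \ell^{4/3}/k$ is easily absorbed. Combining the three pieces yields (e).
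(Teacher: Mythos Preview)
Your proposal is correct and follows essentially the same approach as the paper: part (a) via the shift $(k-1)a_{k,\ell}=2\ell\,a_{k,\ell+1}$, part (b) directly from Lemma~\ref{bintail}, parts (c)--(d) by expanding $1/k$ and $1/k^2$ in falling-factorial partial fractions and collapsing each piece to $\sum_k a_{k,\ell-j}=1$, and part (e) by splitting at $k_0=2\ell-O(\ell^{2/3})$ and $k_1=4\ell$ with Lemma~\ref{bintail} controlling the tails. One small omission: in (e) your middle-block analysis only treats $k_0\le k\le 2\ell$, but the range $2\ell<k<k_1$ is trivial since $e^{z/k}\le e^{z/(2\ell)}$ there and $\sum_k k\,a_{k,\ell}=2\ell+1$.
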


We use the previous two lemmas to estimate $J(w,u)$, as defined in \eqref{Jwdef}.

%
%
%
%
\begin{prop}\label{Jw}
Suppose that  $u = 2\ell + O(\log \ell)$ and $\xi=\xi(u)$.  Then,
on the vertical line $\Re w = -\xi$ we have
the crude bound
\be\label{Jwcrude}
J(w,u) \ll \frac{e^{\xi}}{|w|} \ll \frac{\ell \log \ell}{|w|}.
\ee
Furthermore, if $|w| \le \ell^{1/4}$ then we have the asymptotic
\be\label{Jwasym}
J(w,u) = e^{-w} \left[ \frac{u-w-1}{2\ell} - 1 +
O(|w|^2 \ell^{-3/2}) \right].
\ee
\end{prop}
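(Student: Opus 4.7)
Write $J(w,u) = \E[E_1(w) - E_1(wu/A_\ell)]$ where $A_\ell$ is the random variable from Lemma \ref{Al_moments}, and reduce both bounds to analyzing the integral representation
\[
E_1(w) - E_1(wu/k) = \int_w^{wu/k} \frac{e^{-t}}{t}\,dt = \int_0^{\eta_k} \frac{e^{-w(1+s)}}{1+s}\,ds,
\]
obtained via the substitution $t = w(1+s)$, where $\eta_k := u/k - 1$. By Lemma \ref{Al_moments}(a)--(b), $A_\ell$ concentrates near $2\ell$ with standard deviation $O(\sqrt{\ell})$, so $\eta_{A_\ell}$ is typically of size $O(\ell^{-1/2})$.

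\textbf{Crude bound.} On $\Re w = -\xi$, the classical asymptotic $E_1(z) = -e^{-z}(1+O(|z|^{-1}))/z$ (uniform in sectors $|\arg z|<3\pi/2-\varepsilon$) gives $|E_1(w)|\ll e^{\xi}/|w|$ and $|E_1(wu/k)| \ll k e^{\xi u/k}/(|w|u)$. Averaging over $k\sim A_\ell$ and applying Lemma \ref{Al_moments}(e) with $z = \xi u$---which lies in $[0,\ell^{4/3}]$ since $\xi u \ll \ell\log\ell$ by \eqref{xi-approx}---yields $\E[A_\ell e^{\xi u/A_\ell}] \ll \ell e^{\xi u/(2\ell)} \ll \ell e^{\xi}$ (using $u/(2\ell)=1+O(\log\ell/\ell)$). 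Hence $\sum_k a_{k,\ell}|E_1(wu/k)| \ll e^{\xi}/|w|$, which combined with $|E_1(w)|\ll e^{\xi}/|w|$ gives \eqref{Jwcrude} after inserting $e^{\xi} \asymp u\xi \asymp \ell\log\ell$.

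\textbf{Asymptotic.} Taylor expand the integrand
\[
\frac{e^{-w(1+s)}}{1+s} = e^{-w}\sum_{r\ge 0}(-1)^r\Bigl(\sum_{n=0}^r \tfrac{w^n}{n!}\Bigr)s^r
\]
and integrate term by term. Taking expectation gives
\[
J(w,u) = e^{-w}\Bigl[\E\eta - \tfrac{1+w}{2}\E\eta^2 + \tfrac{1+w+w^2/2}{3}\E\eta^3 - \cdots\Bigr].
\]
By Lemma \ref{Al_moments}(c) and $u=2\ell+O(\log\ell)$, $\E\eta = u\E[1/A_\ell] - 1 = u/(2\ell) - 1 + O(\ell^{-2})$. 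Using Lemmas \ref{Al_moments}(c) and (d) together, the leading $\ell^{-1}$ contributions in $\E\eta^2 = u^2\E[1/A_\ell^2] - 2u\E[1/A_\ell] + 1$ cancel, and the subleading term $1/(8\ell^3)$ from part (d) produces $\E\eta^2 = 1/(2\ell) + O((\log\ell)^2/\ell^2)$. For $r\ge 3$, the bound $\E|\eta|^r \ll \ell^{-r/2}$ follows from Lemma \ref{Al_moments}(b) combined with the concentration $A_\ell\asymp 2\ell$, and the geometric structure of the Taylor coefficients yields $\sum_{r\ge 3} |c_{r-1}|\E|\eta|^r/r \ll |w|^2\ell^{-3/2}$ under the hypothesis $|w|\le\ell^{1/4}$. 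Substituting these estimates and collecting terms yields \eqref{Jwasym}.

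\textbf{Main obstacle.} The critical subtlety is that $\E\eta^2$ cannot be computed from only the leading-order estimates $\E[1/A_\ell] \sim 1/(2\ell)$ and $\E[1/A_\ell^2] \sim 1/(4\ell^2)$; those contributions cancel to order $((u-2\ell)/(2\ell))^2 = O((\log\ell)^2/\ell^2)$, and the main $1/(2\ell)$ piece of $\E\eta^2$ comes \emph{solely} from the second-order term $1/(8\ell^3)$ in Lemma \ref{Al_moments}(d) after multiplication by $u^2\asymp 4\ell^2$. One must therefore retain that subleading moment, and separately verify via Lemma \ref{bintail} that the rare tails of $A_\ell$ near $\ell+1$ (where $|\eta|$ can be of order $1$) contribute only an exponentially small error that does not upset the quadratic Taylor truncation.
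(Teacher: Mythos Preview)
Your proposal is correct and follows essentially the same route as the paper: both arguments write $E_1(w)-E_1(wu/k)=\int_0^{u/k-1}\frac{e^{-w(1+s)}}{1+s}\,ds$, Taylor expand, and feed in the moment estimates of Lemma~\ref{Al_moments}, with the crucial observation (which you highlight as the ``main obstacle'') that the leading term of $\E\eta^2$ comes from the $1/(8\ell^3)$ correction in part~(d). The only cosmetic difference is in the crude bound, where the paper bounds the \emph{difference} $E_1(w)-E_1(wu/k)$ directly via integration by parts rather than bounding each $E_1$ term separately through its asymptotic; this sidesteps the (harmless, since $a_{k,\ell}$ is exponentially small there) issue of justifying the $E_1$ asymptotic when $|wu/k|$ is not large.
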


\begin{proof}
Using integration by parts, we see that
\be\label{E1E1}
\begin{split}
E_1(w) - E_1\pfrac{w u}{k}  &= \int_1^{u/k} \frac{e^{-wz}}{z}\, dz \\
&= \frac{e^{-w} - e^{-w u/k} (k/u)}{w} 
- \frac{1}{w} \int_{\frac{u}{k}}^1 \frac{e^{-wz}}{z^2}\, dz\\
&\ll \frac{e^\xi + e^{\xi u/k}(k/u)}{|w|} + \frac{(k/u)\max(e^\xi,e^{\xi u/k})}{|w|} \\
&\ll \frac{(e^\xi+e^{\xi u/k})(1+k/u)}{|w|}.
\end{split}
\ee
Apply \eqref{Jwdef}, followed by an application of 
Lemma \ref{Al_moments} (a) and (e).  We have
\dalign{
J(w,u) &\ll \frac{1}{|w|} \sum_{k=\ell+1}^\infty 2^{1-k} \binom{k-2}{\ell-1}
(e^\xi + e^{\xi u/k})(1+k/u) \\
&=\frac{1}{|w|} \E \big( 1 + A_\ell/u \big) \big( e^{\xi} + e^{\xi u/A_\ell} \big)\\
&\ll \frac{\E A_\ell \big( e^{\xi} + e^{\xi u/A_\ell} \big)}{u|w|}  \\
&\ll \frac{\ell e^{\xi} + \ell e^{\xi u/(2\ell)}}{\ell |w|},
}
and \eqref{Jwcrude} follows from the bounds on $u$.

Now suppose that $|w| \le \ell^{1/4}$.
By \eqref{xi-approx}, \eqref{E1E1} and Lemma \ref{bintail}, the terms in
the definition \eqref{Jwdef} of $J(w,u)$ corresponding to
$|k-2\ell| > 100 (\ell \log \ell)^{1/2}$ have total sum
\be\label{bigk}
\ll \frac{e^{2\xi}}{|w|} \sum_{|k-2\ell|>100(\ell \log \ell)^{1/2}}
(1+k/u) a_{k,\ell} \ll \frac{1}{\ell^{100}}.
\ee
When $|k-2\ell| < 100(\ell \log \ell)^{1/2}$, the fraction
$u/k = 1 + O(\sqrt{\frac{\log \ell}{\ell}})$.  Hence
\dalign{
E_1(w) - E_1\pfrac{w u}{k} &= e^{-w} \int_0^{\frac{u}{k}-1} \frac{e^{-wv}}{1+v}\, dv \\
&= e^{-w}  \int_0^{\frac{u}{k}-1} \left(1 - (w+1)v+O(|w|^2 v^2)\right)\, dv\\
&=-e^{-w} \left[ 1 - \frac{u}{k} + (w+1)\(1-\frac{u}{k}\)^2 + 
O\( |w|^2 \frac{|k-u|^3}{\ell^3} \)
\right].
}
By Lemma \ref{Al_moments} (b), 
\[
\E |k-u|^3 \ll \E |k-2\ell|^3 + |2\ell-u|^3 \ll \ell^{3/2}
\]
and thus the big-$O$ term above is $\ll |w|^2 \ell^{-3/2}$.
Reintroducing the summands $|k-2\ell| \ge 100(\ell \log \ell)^{1/2}$, which are negligible by \eqref{bigk}, we find using Lemma \ref{Al_moments} (c) and (d) that
\dalign{
J(w,u) &= O\pfrac{1}{\ell^{100}} -e^{-w}\Bigg[ 1 - u
\E A_\ell^{-1} + (w+1) \E \(1- \frac{u}{A_\ell}\)^2 + O(|w|^2\ell^{-3/2})\Bigg] \\
&= O\pfrac{1}{\ell^{100}} -e^{-w}\Bigg[ 1 - \frac{u}{2\ell}
+(w+1)\( \(1-\frac{u}{2\ell}\)^2 + \frac{u^2}{8\ell^3}\)
+ O(|w|^2\ell^{-3/2})\Bigg]\\
&= e^w \left[ \frac{u-w-1}{2\ell} - 1 +  O(|w|^2\ell^{-3/2})
\right].
}
Here we used repeatedly the bounds $|w|\ge 1$ and $|u-2\ell| \ll \log \ell.$
This completes the proof of \eqref{Jwasym}.
\end{proof}

We now complete the proof of Theorem 2.
Begin with the $w$-integral on the right side of \eqref{clJ2} and define
\be\label{u}
u = 2\ell + 1 - \xi(2\ell), \qquad \sigma = u \xi(u).
\ee
Since
\[
\xi'(u) = \frac{\xi+1}{u(\xi-1)+1} \ll \frac{1}{u}
\]
and $\xi(2\ell)\ll \log \ell$,
it follows that 
\[
\xi(2\ell) = \xi(u) + O\pfrac{\log \ell}{\ell}
\]
and hence that 
\[
u = 2\ell + 1 - \xi(u) + O\pfrac{\log \ell}{\ell}.
\]
Plugging this into \eqref{Jwasym}, we see that
when $w=-\xi+i\tau$ and $|\tau| < \ell^{1/4}$, we have 
the bound
\be\label{smalltau}
J(-\xi+i\tau,u) = e^{-w} \( \frac{-i\tau}{2\ell} +  O(|w|^2\ell^{-3/2}) \)
\ll |\tau| \log \ell + \frac{\log^3 \ell + |\tau|^2\log \ell} {\ell^{1/2}} \quad (|\tau|<\ell^{1/4}).
\ee

We now insert the estimates \eqref{smalltau}, \eqref{Jwcrude}
and the bounds from Lemma \ref{lem:Ten} into
the right side of \eqref{clJ2}.
Let
\[
\tau_1 = 100 \sqrt{\frac{\log u}{u}}, \quad \tau_2 = \pi, \quad 
\tau_3 = 1+u\xi(u).
\]
Write $w=-\xi + i\tau$, $\xi=\xi(u)$.

Our fist task is to show that the part of the integral with $|\tau| > \tau_1$ is negligible.  When $\tau_1\le |\tau| \le \tau_2$,
Lemma \ref{lem:Ten} and \eqref{smalltau} imply that
\dalign{
e^{\gamma-\Ein(w)+J(w,u)} &\ll e^{-\Ein(-\xi)-\tau^2 u/(2\pi^2)+O(|\tau|\log \ell)}\\
&\ll e^{-\Ein(-\xi)-1000\log u}.
}
When $\tau_2 \le |\tau| \le \tau_3$, Lemma \ref{lem:Ten},
\eqref{Jwcrude} and \eqref{smalltau} together imply
\dalign{
e^{\gamma-\Ein(w)+J(w,u)} &\ll e^{-\Ein(-\xi) - \frac{u}{\pi^2+\xi^2}+O(\ell^{3/4}\log \ell)} \\
&\ll e^{-\Ein(-\xi) - \frac{u}{2\xi^2}},
}
and when $|\tau| > \tau_3$, Lemma \ref{lem:Ten} and \eqref{Jwcrude} give
\[
e^{\gamma-\Ein(w)+J(w,u)} = \frac{1}{w} \(1+O\pfrac{\ell \log \ell}{|w|} \).
\]
We find that the portion of the $w$-integral in \eqref{clJ2}
corresponding to $|\tau| \ge \tau_1$ is
\dalign{
&\ll \frac{e^{-u\xi - \Ein(-\xi)}}{\ell^{500}}+
e^{-u\xi}  \int_{\tau_3}^\infty \Bigg|
\frac{e^{ i\tau u}}{\tau}\(1+ O\pfrac{\ell \log \ell}{\tau}\)\ d\tau \Bigg|\\
&\ll  \frac{e^{-u\xi - \Ein(-\xi)}}{\ell^{500}}+e^{-u\xi} \ll
 \frac{e^{-u\xi - \Ein(-\xi)}}{\ell^{500}},
}
upon appealing to the easy bound $-\Ein(-\xi) \gg \xi^{-1} e^{\xi} \gg \ell$.

Finally, we consider $|\tau| \le \tau_1$.  By Lemma \ref{lem:Ten}
and \eqref{Jwcrude} it follows that
\dalign{
\frac{1}{2\pi i} \int_{-\xi-i\tau_1}^{-\xi+i\tau_1} e^{u w}
e^{\gamma-\Ein(w)+J(w,u)}\, dw = K(u) + 
O\Bigg( e^{-u\xi-\Ein(-\xi)}  \frac{\log^2 \ell}{\ell} \Bigg),
}
where
\[
K(u) = \frac{1}{2\pi i} \int_{-\xi-i\tau_1}^{-\xi+i\tau_1} e^{u w}
e^{\gamma-\Ein(w)}\, dw.
\]
Extending the limits to $-\xi \pm i\infty$ produces a small error
term by Lemma \ref{lem:Ten} and it follows from \eqref{rhohat} that
\[
\rho(u) - K(u) \ll
\int_{|\tau| > \tau_1} |e^{uw-\Ein(w)}|\, dw \ll \frac{e^{-\xi -\Ein(-\xi)}}{\ell^{100}}.
\]
Gathering these estimates together, we deduce that
\[
c_l = \rho(u) + O\( \frac{\log^2 \ell}{\ell} e^{-u\xi-\Ein(-\xi)}\).
\]
By Theorem 5.13 of \cite[Ch. III]{Tenbook}, we have
\be\label{rhoasym}
\rho(u) =\(1+O\pfrac{1}{u}\) \pfrac{\xi}{2\pi(u(\xi-1)+1)}^{1/2} e^{\gamma-u\xi-\Ein(-\xi)} \gg
\frac{1}{u^{1/2}} e^{-u\xi-\Ein(-\xi)}
\ee
and thus
\be\label{cl-asym1}
c_l = \rho(u) \( 1 + O\pfrac{\log^2 \ell}{\ell^{1/2}} \).
\ee
Finally, we estimate the error made by replacing $u$ by
\[
u^* = 2\ell + 1 - \log(2\ell \log (2\ell)) - \frac{\log\log (2\ell)}{\log 2\ell}
\]
in \eqref{cl-asym1}.  By \eqref{xi-approx}, 
\[
|u-u^*| \ll \frac{(\log\log \ell)^2}{\log^2 \ell}.
\]
Hence, using \eqref{rhoasym}, \eqref{xi-approx},
the bound $\xi'(u) \ll 1/u$ and the bounds
\dalign{
\Ein(-\xi(u))-\Ein(-\xi(u^*)) &\ll \frac{e^{\xi(u)}}{\xi(u)}| \xi(u^*)-\xi(u)| \ll |u-u^*|,\\
u\xi(u) - u^* \xi(u^*) &\ll |u-u^*| \log u, 
}
we see that
\[
\rho(u) \sim \rho(u^*) \quad (u\to \infty).
\]
Combining this with \eqref{cl-asym1},
this completes the proof of Theorem \ref{clasym}.


\section{Numerical computation of $c$}\label{sec:c}

The terms with $k=1$ and $k=2$ in \eqref{cdef}
contribute 1, respectively,
$\sum_{m=2}^\infty 2^{1-m}\log \pfrac{m}{m-1} = 0.507833922868438392189041\ldots$.
Define
\[
f(t) = \sum_{k=3}^\infty \frac{1}{k!} \;\;\; \idotsint
\limits_{\substack{u_i\ge 0\; \forall i \\u_1+\cdots+u_k=t}} h(u_1)\cdots h(u_k)\, du_1\cdots du_{k-1},
\]
so that $c=f(1)+1.507833922868438392189041\ldots$. 
Extend the definition of $h$ to $(0,\infty)$ by defining $h(u)=1/u$ for $u\ge 1$.  In this way, $h(u)=1/u$ for $u>1/2$, and thus $h$ is $C^\infty$ near $t=1$.
 As in previous sections, define the Laplace transform
\[
F(s) = \int_0^\infty f(t) e^{-st}\, dt = e^J-1-J^2/2. \quad J = 
\int_0^\infty h(u) e^{-su}\, du.
\]
Using that $h(u)=u^{-1} 2^{1-m}$ for $\frac{1}{m+1}<u\le \frac{1}{m}$, $m\ge 1$, and recalling the definition 
\eqref{E1} of $E_1(z)$, we quickly derive
\dalign{
\int_0^\infty h(u) e^{-su}\, du &=
\sum_{m=1}^\infty 2^{1-m} \int_{1/(m+1)}^{1/m} \frac{e^{-su}}{u}\, du + \int_1^\infty \frac{e^{-su}}{u}\, du \\
&= \sum_{m=2}^\infty 2^{1-m} E_1(s/m).
}
Again, we use the Python package \texttt{mpmath}
to numerically invert the Laplace transform $F(s)$,
and this gives $c=f(1)=1.526453\ldots$.

%
%


\appendix

\section{Proof of Lemma \ref{DG}}

Recall that for random $\qq=\qq(n)=(q_1,\ldots,q_k)$ we defined
\be\label{Xin}
X_i(n) = \frac{\log q_i}{\log(\frac{n}{q_1\cdots q_{i-1}})}.
\ee
It suffices to show that for any real numbers $0<a_i<b_i<1$ $(1\le i\le k$), 
\be\label{uniform}
\PR_x ( a_i \le X_i(n) \le b_i \; (1\le i\le k) ) \to \prod_{i=1}^k
(b_i-a_i) \qquad (x\to \infty).
\ee
Below, constants implied by $O-$ an $\ll-$ may depend on $k$
and the $a_i,b_i$.
From \eqref{qiX}, if $X_{i}\le b_i$ for all $i$ then
\be\label{frac-lwr}
\frac{n}{q_1\cdots q_{i-1}} \ge n^{(1-b_1)\cdots (1-b_{i-1})}.
\ee
Hence, writing $c=(1-b_1)\cdots (1-b_k)\min_i a_i$,
we have $q_i > n^{c}$ for all $i$ under the assumption 
that $a_i \le X_i(n) \le b_i$ for every $i$.  If some $q_i$ 
is not prime, then $n$ is divisible by a prime power $p^a>x^{c/2}/\log x$
with $a\ge 2$ and the number of such $n\in (x,2x]$ is
$O(x^{1-c/2})$.  Thus, we may assume that the $q_i$
are all prime.  
In this case, $\log q_=\Lambda(q_i)$ and hence $X_i(n)$ equals the probability that $q_i$ is chosen at step $i$.
We calculate, using \eqref{Xin},
\dalign{
\PR_x ( a_i &\le X_i(n) \le b_i \; (1\le i\le k) ) = \frac1{x} \sum_{x<n\le 2x} \ssum{q_1|n \\ a_1\le X_1(n) \le b_1} X_1(n)
\cdots \ssum{q_k|n \\ a_1\le X_k(n) \le b_1} X_k(n).
}
On the right side, the variables $q_i$ are no longer random, 
but we still define $X_i(n)$ by \eqref{Xin}.
Since $\log x \le \log n\le \log (2x)$, the above expression is bounded below by
\dalign{
(1+O(1/\log x)) \sum_{a_1 \log (2x) \le \log q_1 \le b_1 \log x}
\frac{\log q_1}{q_1}  \cdots \sum_{a_k \log(\frac{2x}{q_1\cdots q_{k-1}}) \le \log q_k \le b_k \log(\frac{x}{q_1\cdots q_{k-1}})}
\frac{\log q_k}{\log \frac{x}{q_1\cdots q_{k-1}}},
}
and bounded above by the same expression with ``$x$'' and
``$2x$'' interchanged in the logarithms.

For each fixed $q_1,\ldots,q_{i-1}$, Mertens' estimate gives
\[
\sum_{a_i \log(\frac{x}{q_1\cdots q_{i-1}})+O(1) \le \log q_i \le b_i \log(\frac{x}{q_1\cdots q_{i-1}})+O(1)}
\frac{\log q_i}{\log \frac{x}{q_1\cdots q_{i-1}}} =
b_i-a_i + O\pfrac{1}{\log x},
\]
and the desired result \eqref{uniform} follows.



\begin{thebibliography}{99}

\bibitem{Bi} P. Billingsly, {\it
On the distribution of large prime divisors},
Collection of articles dedicated to the memory of Alfr\'ed R\'enyi, I.
Period. Math. Hungar. {\bf 2} (1972), 283--289.


\bibitem{Dav} H. Davenport, {\it Multiplicative number theory}, 3rd ed.,
Graduate Texts in Mathematics vol. 74, Springer-Verlag, New York, 2000.


\bibitem{DG} P. Donnelly and G. Grimmett, {\it On the asymptotic
    distribution of large prime factors}, J. London Math. Soc.
(2) {\bf 47} (1993), 395--404.

\bibitem{EGRS}
P. Erd\H os, R. L. Graham, I. Z. Ruzsa, E. G. Straus, 
{\it On the prime factors of  $\binom{2n}{n}$},
 Collection of articles in honor of Derrick Henry Lehmer on the occasion of his seventieth birthday, Math. Comp. {\bf 29} (1975)
83--92.



\bibitem{GK} S. W. Graham and G. Kolesnik, {\it Van der Corput's method of exponential sums}, London Math. Soc. Lecture Note, vol. 126, Cambridge University Press, 1991.


\bibitem{Gran}
A. Granville, {\it Arithmetic properties of binomial coefficients. I. Binomial coefficients modulo prime
powers}, Organic Mathematics (Burnaby, BC, 1995), 253--276, CMS Conf. Proc., 20, Amer. Math. Soc., Providence, RI, 1997.

\bibitem{HR} 
H.~Halberstam and H.-E.~Richert, {\it Sieve Methods\/}, Academic
Press, London, 1974.


\bibitem{HT}
R. R. Hall, G. Tenenbaum, {\em Divisors,} Cambridge
Tracts in mathematics vol. {\bf 90}, 1988.

\bibitem{Handa} K. Handa, {\it The two-parameter Poisson-Dirichlet process}, Bernoulli \textbf{15} (2009), 1082--1116.

\bibitem{K} E. E. Kummer, {\it \"Uber die Erg\"anzungss\"atze zu den allgemeinen Reciprocit\"atsgesetzen}, J. Reine angew. Math.
{\bf 44} (1852), 93--146.

\bibitem{HLM} H. L. Montgomery, {\it Ten Lectures on the Interface Between
Analytic Number Theory and Harmonic Analysis}, CBMS Regional Conference
Series in Mathematics vol. 84, Amer. Math. Soc., 1994.


\bibitem{OEIS} N. A. Sloane,
{\it The on-line encyclopedia of integer sequences. }
\texttt{http://oeis.org}



\bibitem{Pom} C. Pomerance, {\it Divisors of the middle binomial 
coefficient}, Amer. Math. Monthly {\bf 122} (2015), 636--644.


\bibitem{sanna} C. Sanna,
{\it Central binomial coefficients divisible by or coprime to their indices.} 
Int. J. Number Theory {\bf 14} (2018), no. 4, 1135--1141. 


\bibitem{Ten} G. Tenenbaum, {\it A rate estimate in Billingsley's
theorem for the size distribution of large prime factors},
  Quart. J. Math. Oxford  {\bf 51}  (2000),  no. 3, 385--403.

\bibitem{Tenbook} G.~Tenenbaum,   {\it Introduction to Analytic and
Probabilistic Number Theory, 3rd ed.}, Amer. Math Soc., 2015
%
%

\end{thebibliography}
\end{document}